\newtheorem{Theorem}{Theorem}
\newtheorem{Corollary}[Theorem]{Corollary}
\newtheorem{Lemma}[Theorem]{Lemma}
\newtheorem{Proposition}[Theorem]{Proposition}
\theoremstyle{definition}
\newtheorem{Remark}[Theorem]{Remark}
\numberwithin{Theorem}{section}
\newcommand{\norm}[1]{\left\|{#1}\right\|}
\newcommand{\wick}[1]{:\hspace{-2pt}{#1}\hspace{-2pt}:}
\newcommand{\jap}[1]{\left\langle{#1}\right\rangle}
\newcommand{\les}{\lesssim}
\newcommand{\into}{\hookrightarrow}
\newcommand{\X}{\mathcal X^\alpha}
\renewcommand{\bar}{\overline}
\renewcommand{\epsilon}{\varepsilon}
\newcommand{\T}{\mathbb T}
\newcommand{\Tt}{\mathcal T_t}
\renewcommand{\H}{\mathcal H}
\newcommand{\N}{\mathbb N}
\renewcommand{\P}{\mathbb P}
\newcommand{\E}{\mathbb E}
\newcommand{\Z}{\mathbb{Z}}
\newcommand{\R}{\mathbb R}
\newcommand{\C}{\mathscr C}
\newcommand{\W}{\mathscr W}
\renewcommand{\d}{\mathrm d}
\newcommand{\1}{\mathbb 1}
\renewcommand{\u}{{\mathbf u}}
\renewcommand{\v}{{\mathbf v}}
\newcommand{\w}{{\mathbf w}}
\newcommand{\0}{\mathbf 0}
\newcommand{\dual}[2]{\left\langle#1,#2\right\rangle}
\renewcommand{\vec}[2]{\begin{pmatrix} #1 \\ #2 \end{pmatrix}}
\newcommand{\A}{\mathcal A}
\newcommand{\f}{\mathbf f}
\newcommand{\g}{\mathbf g}
\let \div \relax
\DeclareMathOperator{\div}{div}
\DeclareMathOperator{\Lip}{Lip}
\DeclareMathOperator{\id}{id}
\def\DeclareSymbol#1#2#3{\expandafter\gdef\csname MH@symb@#1\endcsname{\tikz[baseline=#2, scale=.18]{#3}}}
\def\<#1>{\ensuremath{\mathchoice{\tikzsetnextfilename{macros#1}{\color{black}\csname MH@symb@#1\endcsname}}{\tikzsetnextfilename{macros#1}{\color{black}\csname MH@symb@#1\endcsname}}{\tikzsetnextfilename{macros#1}\scalebox{.7}{\color{black}\csname MH@symb@#1\endcsname}}
{\tikzsetnextfilename{macros#1}\scalebox{.5}{\color{black}\csname MH@symb@#1\endcsname}}}} 
\newcommand{\Stick}{\mathbf{\<1>}}
\newcommand{\stick}{\<1>}
\title{Unique ergodicity for a class of stochastic hyperbolic equations with additive space-time white noise}
\author{Leonardo Tolomeo}
\date{}
\begin{document}

\maketitle

\begin{abstract}
In this paper, we consider a certain class of second order nonlinear PDEs with damping and space-time white noise forcing, posed on the $d$-dimensional torus. This class includes the wave equation for $d=1$ and the beam equation for $d\le 3$. We show that the Gibbs measure is the unique invariant measure for this system. Since the flow does not satisfy the Strong Feller property, we introduce a new technique for showing unique ergodicity. This approach may be also useful in situations in which finite-time blowup is possible.
\end{abstract}
\unmarkedfntext{\textbf{Keywords}: stochastic nonlinear wave equation; stochastic nonlinear beam equation; white noise; ergodicity}
\unmarkedfntext{\textbf{2010 Mathematics Subject Classification}: 35L15, 37A25, 60H15}
\section{Introduction}
Consider the equation
\begin{equation*} \label{maineq}
u_{tt} + u_t + u + (-\Delta)^\frac s2 u + u^3 = \sqrt2\xi,
\end{equation*}
posed on the $d$ - dimensional torus $\T^d$, where $\xi$ is the \emph{space-time white noise} on $\R\times\T^d$ (defined in Section 2), and $s>d$.

By expressing this equation in vectorial notation, 
\begin{equation}\label{general}
\partial_t \vec{u}{u_t} = - \begin{pmatrix}
0 & -1 \\
1 + (-\Delta)^\frac s2 & 1
\end{pmatrix}
\vec{u}{u_t} - \vec{0}{u^3} + \vec{0}{\sqrt2\xi},
\end{equation}
from a formal computation, we expect this system to preserve
the \emph{Gibbs} measure
\begin{equation*}
\d \rho(u,u_t) ``=" \exp\Big(-\frac14 \int u^4 - \frac12 \int u^2 + |(-\Delta)^\frac s2 u|^2 \Big) 
\exp\Big(-\frac12 \int u_t^2\Big) ``\d u\d u_t",
\end{equation*}
where $``\d u\d u_t"$ is the non-existent Lebesgue measure on an infinite dimensional vector space (of functions). 
Heuristically, 
we expect invariance for this measure by splitting \eqref{general} into
\begin{enumerate}
\item 
\begin{equation*}
\partial_t \vec{u}{u_t} = - \begin{pmatrix}
0 & -1 \\
1 + (-\Delta)^\frac s2 & 0
\end{pmatrix}
\vec{u}{u_t} - \vec{0}{u^3},
\end{equation*}
which is a Hamiltonian PDE in the variables $u,u_t$, and so it should preserve 
the Gibbs measure 
$$\exp\Big(-H(u,u_t)\Big) ``\d u\d u_t",$$
where $H(u,u_t) = \frac14 \int u^4 + \frac12 \int u^2 + |(-\Delta)^\frac s2 u|^2 + \frac12 \int u_t^2$,
\item
\begin{equation*}
\partial_t \vec{u}{u_t} = - \begin{pmatrix}
0 & 0 \\
0 & 1
\end{pmatrix}
\vec{u}{u_t} - \vec{0}{\sqrt2 \xi},
\end{equation*}
which is the Ornstein - Uhlenbeck process in the variable $u_t$, and so it preserves the spatial white noise
$$\exp\Big(-\frac12 \int u_t^2\Big)``\d u_t".$$
\end{enumerate}
For $s=1$, up to the damping term $\exp\Big(-\frac12 \int u_t^2\Big) \d u_t$,  the measure $\rho$ corresponds to the well known $\Phi^4_d$ model of quantum field theory,
which is known to be definable without resorting to renormalisation just for $d=1$ (this measure will be rigorously defined - in the case $s>d$ - in Section 2).

Our goal is to study the global behaviour of the flow of \eqref{general}, 
by proving invariance of the measure $\rho$ and furthermore showing that $\rho$ is the unique invariant measure for the flow.

Following ideas first appearing in Bourgain's seminal paper \cite{b94} and in the works of McKean-Vasinski \cite{mv94} and McKean \cite{m95III, m95IV}, there have been many 
developments in proving invariance of the Gibbs measure for \emph{deterministic} dispersive PDEs (see for instance 
\cite{b96, b97, bb14, bb14I, bb14II, btt18, cd15, kmv19, ot18, otw18, r16}).

A natural question that arises when an invariant measure is present is uniqueness of the invariant measure and convergence to equilibrium starting from a ``good enough" initial data. This has been extensively studied in
the case of parabolic stochastic PDEs (see for instance \cite{dpez95, dpz96, eh01, fq15, h09, hm17, rzz17, tw17} and references therein) and for stochastic equations coming from fluid dynamics (see \cite{d13, hm11, zz15} and references therein). 
On the other hand, there are not many results in the dispersive setting, and they often rely either on some smoother version of the noise $\xi$, or onto some dissipative properties of the system (see for instance \cite{do05, ikm17, k02, ks00, ks01, kps02, ks02, ks04, kv13, o06}). 
Indeed, as far as the author knows, the ergodicity result of this paper is the first that can deal with a forcing as rough as space-time white noise in a setting without any dissipation. More precisely, we will prove the following: 
\begin{Theorem}\label{mainthm}
Let $s=4,d=3$. Then the measure $\rho$ is invariant for the Markov process associated to the flow $\Phi_t(\cdot,\xi)$ of \eqref{general}, in the sense that for every
function $F$ measurable and bounded, for $\u = (u,u_t)^\textup{T}$,
\begin{equation} \label{defInvariance}
\int \E[F(\Phi_t(\u,\xi))]\d\rho(\u) = \int F(\u) \d\rho(\u) \text{ for every } t>0. 
\end{equation}
Moreover, there exists a Banach space $X^\alpha$ which contains the Sobolev Space $\H^\frac s2 := H^\frac s2 \times L^2$, such that for every $0<\alpha< \frac {s-d}2$, $\rho$ is the only invariant measure concentrated on $X^\alpha$. Furthermore, for every $\u_0 \in X^\alpha$ and for every $F: X^\alpha \to \R$ continuous and bounded,
\begin{equation}\label{meanconvergence}
\lim_{T \to \infty} \frac1T\int_0^T\E[F(\Phi_t(\u_0,\xi))]\d t=\int F(\u) \d\rho(u).
\end{equation}
\end{Theorem}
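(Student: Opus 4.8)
I would organise the proof around four ingredients — a well‑posedness theory, invariance of $\rho$, a priori bounds from the damping, and a contraction estimate — the last of which carries the main difficulty and encodes the ``new technique''. Throughout, decompose a solution as $u=Z+v$, where $(Z,Z_t)^{\mathrm T}$ is the stationary Ornstein--Uhlenbeck type process solving the linear damped equation $Z_{tt}+Z_t+Z+(-\Delta)^{\frac s2}Z=\sqrt2\,\xi$ with the prescribed initial data, and $v$ solves the remaining nonlinear equation $v_{tt}+v_t+v+(-\Delta)^{\frac s2}v=-(Z+v)^3$; the gain is that $(v,v_t)$ lives in the smoother space $\H^{\frac s2}=H^{\frac s2}\times L^2$. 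Since $s>d$ one has $\E Z(x)^2=\sum_k(1+|k|^s)^{-1}<\infty$, so no renormalisation is needed: $Z$ is a genuine function, almost surely in $H^{\frac{s-d}2-}(\T^d)$ — which is where the constraint $\alpha<\frac{s-d}2$ enters, and what forces $X^\alpha$ to contain both $\H^{\frac s2}$ and $\supp\rho$ — and, by Gaussian hypercontractivity, in every $L^p(\T^d)$ locally uniformly in time, while $Z_t$ is only as regular as spatial white noise. Consequently the forcing $(Z+v)^3$ lies in $L^2_x$ almost surely, so in the case $s=4$, $d=3$ the $v$-equation is locally well-posed in $\H^{\frac s2}$ (using that $H^2(\T^3)$ is an algebra, together with the $L^p$ bounds on $Z$), with continuous dependence on the data; hence \eqref{general} is locally well-posed in $X^\alpha$. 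Global well-posedness on $\supp\rho$ follows from Bourgain's invariant-measure argument, and for general data in $X^\alpha$ from an energy estimate for $v$ that is finite on every bounded time interval.

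Invariance of $\rho$ I would prove by the classical route: approximate \eqref{general} by its Galerkin truncation, which splits exactly as in the Introduction into a finite-dimensional Hamiltonian flow preserving $e^{-H}$ (by Liouville's theorem and conservation of $H$) and an Ornstein--Uhlenbeck process in $u_t$ preserving the Gaussian in $u_t$; hence the truncated Gibbs measure is invariant, and one passes to the limit using convergence of the truncated flows and of the truncated Gibbs measures to $\rho$, giving \eqref{defInvariance}. Next, testing \eqref{general} against $u_t+\epsilon u$ in the $(Z,v)$ variables, one should aim to extract a Lyapunov functional $V$, comparable to an energy of $(v,v_t)$ adapted to the $X^\alpha$-regularity together with explicit polynomial functionals of $Z$, satisfying $\tfrac{\d}{\d t}\E[V(\u(t))]\le-c\,\E[V(\u(t))]+C$; then $\sup_{t\ge0}\E[V(\Phi_t(\u_0,\xi))]<\infty$ and, the sublevel sets of $V$ being compact in $X^\alpha$, the family $\{\mathrm{Law}(\Phi_t(\u_0,\xi))\}_{t\ge0}$ is tight (this also yields, via Krylov--Bogolyubov, existence of an invariant measure on $X^\alpha$, which by what follows must be $\rho$).

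The heart of the argument is a \emph{contraction} statement: for $\u_0,\tilde\u_0\in X^\alpha$ and the \emph{same} noise $\xi$, $\norm{\Phi_t(\u_0,\xi)-\Phi_t(\tilde\u_0,\xi)}_{X^\alpha}\to0$ as $t\to\infty$, at least in a time-averaged sense. Decomposing $\u=(Z,Z_t)+(v,v_t)$ and $\tilde\u=(\tilde Z,\tilde Z_t)+(\tilde v,\tilde v_t)$, the common forcing cancels, so $(Z-\tilde Z,Z_t-\tilde Z_t)$ solves the homogeneous linear damped equation and decays exponentially in $X^\alpha$; writing $\zeta:=Z-\tilde Z$, the difference $w:=v-\tilde v$ solves $w_{tt}+w_t+w+(-\Delta)^{\frac s2}w+(u^2+u\tilde u+\tilde u^2)(w+\zeta)=0$. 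The decisive structural fact is the \emph{defocusing sign}, $u^2+u\tilde u+\tilde u^2\ge0$: pairing the equation with $w$ turns the leading nonlinear contribution $\int(u^2+u\tilde u+\tilde u^2)w^2\ge0$ into dissipation, the $\zeta$-term being an exponentially small error, so that in combination with the $\times w_t$ estimate and the exponential stability of the linear damped flow one would hope to run a Gr\"onwall argument for a functional comparable to $\norm{(w,w_t)}_{\H^{\frac s2}}^2$ and conclude. \emph{This is where I expect the real obstacle.} The troublesome term is the cross term $\int(u^2+u\tilde u+\tilde u^2)\,w\,w_t$: it is finite (the rough field $Z_t$ cancels in $w_t=v_t-\tilde v_t\in L^2_x$, and $u^2+u\tilde u+\tilde u^2\in\bigcap_pL^p_x$ a.s.), but at the regularity imposed by $\rho$ it is not controlled by the available dissipation $\norm{w_t}_{L^2}^2$ since $u^2+u\tilde u+\tilde u^2\notin L^\infty_x$; and absorbing it into the potential $\tfrac12\int(u^2+u\tilde u+\tilde u^2)w^2$ only replaces it by $\tfrac12\int\partial_t(u^2+u\tilde u+\tilde u^2)\,w^2$, which involves $u_t\sim Z_t$ and is ill-defined. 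Overcoming this — by a more robust functional exploiting the positivity of the potential, by space-time (Strichartz) bounds, by using the a priori estimate to control the excursions of $Z$, and very likely by settling for a time-averaged rather than pointwise contraction — is the genuinely new ingredient, and is consistent with \eqref{meanconvergence} being a Ces\`aro limit.

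Granting the (time-averaged) contraction, I would finish as follows. For uniqueness: given any invariant measure $\nu$ on $X^\alpha$, couple $\u_0\sim\rho$ and $\tilde\u_0\sim\nu$, independent of each other and of $\xi$, and run both with the same noise; then $\Phi_t(\u_0,\xi)\sim\rho$ and $\Phi_t(\tilde\u_0,\xi)\sim\nu$ for every $t$, so for each bounded Lipschitz $F$ on $X^\alpha$,
\[
\Big|\int F\,\d\rho-\int F\,\d\nu\Big|=\frac1T\int_0^T\big|\E[F(\Phi_t(\u_0,\xi))-F(\Phi_t(\tilde\u_0,\xi))]\big|\,\d t\ \xrightarrow{\,T\to\infty\,}\ 0,
\]
whence $\rho=\nu$; together with invariance of $\rho$ and $\rho(X^\alpha)=1$ this shows $\rho$ is the only invariant measure on $X^\alpha$. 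For \eqref{meanconvergence}: given $\u_0\in X^\alpha$ and $F\in C_b(X^\alpha)$, couple $\u_0$ with a stationary solution $\tilde\u$ ($\tilde\u(0)\sim\rho$ independent of $\xi|_{[0,\infty)}$) driven by the same noise; using tightness of the two families of laws to reduce $F$ to a uniformly continuous function up to an arbitrarily small error, the contraction gives $\frac1T\int_0^T\E[F(\Phi_t(\u_0,\xi))]\,\d t-\int F\,\d\rho=\frac1T\int_0^T\E[F(\Phi_t(\u_0,\xi))-F(\tilde\u(t))]\,\d t\to0$, which is \eqref{meanconvergence}.
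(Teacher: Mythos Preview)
Your well-posedness and invariance outline (Da Prato--Debussche decomposition, energy estimate for $v$, Galerkin approximation) matches the paper. The gap is in the uniqueness step: you correctly isolate the obstruction to a synchronous-coupling contraction argument --- the cross term $\int(u^2+u\tilde u+\tilde u^2)\,w\,w_t$ is not controllable at the regularity of $\rho$, and the natural integration-by-parts replacement brings in $u_t\sim Z_t$ which is spatial white noise --- but you do not resolve it. Listing ``a more robust functional, Strichartz bounds, time-averaging'' is not a proof, and in fact this obstacle is genuine: the paper does \emph{not} prove contraction, and its introduction explicitly notes that the argument yields no rate of convergence precisely because no such estimate is available.

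The paper's route is entirely different and sidesteps the contraction problem. First, it shows that while the flow fails Strong Feller on $X^\alpha$, it \emph{does} satisfy Strong Feller for the (non-separable, disconnected) topology induced by the $\H^2$-distance; combined with an irreducibility statement (invariant measures are translation-invariant along $\H^2$), this yields: if $\nu_1\perp\nu_2$ are invariant, there exists $V$ with $\nu_1(V)=1$ and $\nu_2(V+\H^2)=0$. The decisive idea is then to pass to the algebraic quotient $\pi:X^\alpha\to X^\alpha/\H^2$. Because the nonlinear remainder $\v$ always lies in $\H^2$, one has $\pi(\Phi_t(\u;\xi))=\pi(S(t)\u+\stick_t)$, so \emph{the projected nonlinear flow coincides with the projected linear flow}. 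The linear flow is a damped Ornstein--Uhlenbeck process whose unique invariant measure is $\mu$, hence $\pi_\sharp\mu$ is ergodic for the projected flow, and any invariant $\nu$ with $\pi_\sharp\nu\ll\pi_\sharp\mu$ must satisfy $\pi_\sharp\nu=\pi_\sharp\mu$. This immediately contradicts the existence of the set $V$ above, giving uniqueness among measures with $\pi_\sharp\nu\ll\pi_\sharp\mu$. The full uniqueness and \eqref{meanconvergence} then follow from tightness of the time averages (here the Lyapunov-type bound on $\v$ is used) together with the observation that any subsequential limit $\bar\rho$ has $\pi_\sharp\bar\rho=\pi_\sharp\mu$, since $\v$ stays in $\H^2$.

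In short: your coupling approach is natural but stalls exactly where you say it does, and the paper's ``new technique'' is not a way to close that estimate but a structurally different argument --- restricted Strong Feller plus the quotient $X^\alpha/\H^2$ --- that never needs it.
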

We will carry out the proof in full details only in the case $s=4, d=3$, however the same proof can be extended to the general nonsingular case $s>d$. We will describe in more details how to obtain the general result in Section 1.4.

The proof of this theorem is heavily influenced by the recent parabolic literature, and in particular by results that use or are inspired by the Bismut-Elworthy-Li formula, especially \cite{tw17} and \cite{hm17}.  
A crucial step in these papers is showing that the flow of the equation in study satisfies the {Strong Feller} property. However, as we will prove in Section 5, the flow of 
\eqref{general} does \emph{not}
satisfy the Strong Feller property, therefore a more refined approach is needed. 

While the argument in this work does not provide any information on the rate of convergence to equilibrium, 
it does not rely on good long time estimates, as opposed to works that rely on the \emph{Asymptotic Strong Feller} property defined in \cite{hm06}. In particular, as far as ergodicity is concerned 
(in the sense that \eqref{meanconvergence} holds $\rho$-a.s.), 
we use just the qualitative result of global existence of the flow, and it may be possible to extend this approach even to situations in which finite-time blowup is possible, similarly to the result in \cite{hm17}. This goes in the direction of dealing with the \emph{singular} case $s=d$. Indeed, in the case $s=d=2$, in a upcoming work by  M.Gubinelli, H. Koch, T. Oh and the author, we prove  global well posedness and invariance of the Gibbs measure for the renormalised damped wave equation
\begin{equation*}
u_{tt} + u_t + u - \Delta u + \wick{u^3} = \sqrt2 \xi.
\end{equation*}
See also \cite{tphd,tr2} for the global existence part of the argument.

However, since the best bound available with the argument grows more than exponentially in time, any approach on unique ergodicity that relies on good long time estimate has little chance to yield any result for this equation.

\subsection{Structure of the argument and organisation of the paper}
In order to make this paper less notation-heavy, we will concentrate on the case $d=3$, $s=4$, which is the Beam equation in 3-d
\begin{equation} \label{bvec}
\partial_t \begin{pmatrix} u \\ u_t \end{pmatrix} = 
-\begin{pmatrix} 0 & -1 \\ 1 + \Delta^2 & 1\end{pmatrix}\begin{pmatrix} u \\ u_t \end{pmatrix} - \begin{pmatrix} 0 \\ u^3  \end{pmatrix} + \begin{pmatrix} 0 \\ \sqrt2\xi \end{pmatrix}. \\
\end{equation} 
Local and global well posedness for the non-damped version of this equation have been explored in detail in an upcoming work by R. Mosincat, O.~Pocovnicu, the author, and Y. Wang (see also \cite{tphd}). We will however present an independent treatment that works for general $s>d$. 
While the case $s=2, d=1$, which corresponds to wave equation in 1 dimension, can arguably be considered more interesting, we decide to focus on \eqref{bvec} because it presents all the difficulties of the general case (namely, the definition of the spaces $X^\alpha$, and some subtleties that come from the multidimensional nature of the equation). At the end of this section, we will discuss how to convert the proof for this case into the proof for the general case.

The paper and the proof of Theorem \ref{mainthm} are organised as follows:
\begin{itemize}
\item In the remaining of this section, we will define what we mean by the flow of \eqref{bvec}, and introduce the spaces $X^\alpha$, the stochastic convolution $\stick_t$, and the notation that we will use throughout the paper.
\item In Section 2, we will state and prove the relevant $X^\alpha$ estimates of the stochastic convolution $\stick_t$, 
as well as define rigorously the measure $\rho$ and prove the related $X^\alpha$ estimates for a generic initial data sampled according to $\rho$.
\item In Section 3, we build the flow, by showing local and global well posedness of the equation \eqref{bvec}. Local well posedness is shown by applying a standard Banach fixed point argument, after reformulating the equation
using the so-called Da Prato-Debussche trick. Global well posedness is shown via an energy estimate, making use of an integration by parts trick similar to the one used in \cite{op16}.
\item In Section 4, we show invariance for the measure $\rho$.
\item Section 5 is dedicated to showing unique ergodicity of $\rho$. In particular, we first recover the Strong Feller 
property by changing the underlying topology of the space $X^\alpha$. However, with this new topology, the space ceases to be connected and separable. Therefore, even when we combine this property with irreducibility of the flow, we derive just the partial information that if $\rho_1\perp \rho_2$ are invariant, then there exists a Borel set $V$ s.t.
$$ \rho_1(V+\H^2)=0, \rho_2(V+\H^2)=1.$$
In order to obtain ergodicity of $\rho$, we combine this argument with a completely algebraic one. We consider the projection $\pi: X^\alpha\to X^\alpha/\H^\frac s2$, and we show that if $\rho_1,\rho_2 \ll \rho$, then $\pi_\sharp \rho_1 = \pi_\sharp \rho_2=\pi_\sharp \rho$, which contradicts the existence of such $V$.

Finally, to conclude uniqueness, we show that for every $\u_0 \in X^\alpha$, if $\mu_t$ is the law of 
$\u(t)=(u(t),u_t(t))$, then every weak limit $\nu$ of $\frac1T\int_0^T \mu_t \d t$ will satisfy $\pi_\sharp \nu = \pi_\sharp \rho$, from which we derive $\nu=\rho$.
\end{itemize}
%
%
%
%
%
\subsection{Mild formulation}
Before discussing ergodicity issues, we need to define the flow of \eqref{bvec}. Consider the\ \emph{linear}\  damped beam equation with forcing $\mathbf f = \vec{f}{g}$ and initial data 
$\u_0 = \vec{u_0}{u_1}$,
\begin{equation*} \label{linear_beam} 
\partial_t \begin{pmatrix} u \\ u_t \end{pmatrix} = 
-\begin{pmatrix} 0 & -1 \\ 1 + \Delta^2 & 1\end{pmatrix}\begin{pmatrix} u \\ u_t \end{pmatrix} + \vec{f}{g}. 
\end{equation*}
By variation of constants, the solution to this equation is given by 
\begin{equation}\label{voc}
\u = S(t)\u_0 + \int_0^t S(t-t') \mathbf f(t') \d t',
\end{equation}
where $S(t)$ is the operator formally defined as 
\begin{equation*}
e^{-\frac t2}
\resizebox{0.9\hsize}{!}{$\displaystyle
\begin{pmatrix}\cos\Big(t\sqrt{\frac34+\Delta^2}\Big) + \frac12\frac{\sin\Big(t\sqrt{\frac34+\Delta^2}\Big)}{\sqrt{\frac34+\Delta^2}}
&\frac{\sin\Big(t\sqrt{\frac34+\Delta^2}\Big)}{\sqrt{\frac34+\Delta^2}} \\
-\Big(\sqrt{\frac34+\Delta^2}-\frac1{4\sqrt{\frac34+\Delta^2}}\Big)\sin\Big(t\sqrt{\frac34+\Delta^2}\Big) & 
\cos\Big(t\sqrt{\frac34+\Delta^2}\Big) -\frac12\frac{\sin\Big(t\sqrt{\frac34+\Delta^2}\Big)}{\sqrt{\frac34+\Delta^2}}
\end{pmatrix}$},
\end{equation*}
or equivalently, is the operator that corresponds to the symbol 
\begin{equation*}
e^{-\frac t2}
\resizebox{0.9\hsize}{!}{$\displaystyle
\begin{pmatrix}\cos\Big(t\sqrt{\frac34+|n|^4}\Big) + \frac12\frac{\sin\Big(t\sqrt{\frac34+|n|^4}\Big)}{\sqrt{\frac34+|n|^4}}
&\frac{\sin\Big(t\sqrt{\frac34+|n|^4}\Big)}{\sqrt{\frac34+|n|^4}} \\
-\Big(\sqrt{\frac34+|n|^4}-\frac1{4\sqrt{\frac34+|n|^4}}\Big)\sin\Big(t\sqrt{\frac34+|n|^4}\Big) & 
\cos\Big(t\sqrt{\frac34+|n|^4}\Big) -\frac12\frac{\sin\Big(t\sqrt{\frac34+|n|^4}\Big)}{\sqrt{\frac34+|n|^4}}
\end{pmatrix}$}
\end{equation*}
in Fourier series. We notice that this operator maps distributions to distributions, and for every $\alpha \in \R$,
it maps the Sobolev space $\H^\alpha:= H^\alpha \times H^{\alpha-2}$ into itself, with
the estimate $\norm{S(t)\u}_{\H^\alpha} \lesssim e^{-\frac t2}\norm{\u}_{\H^\alpha}$. 

By the formula 
\eqref{voc}, since we formally have $\mathbf f = - \vec{0}{u^3} + \vec{0}{\xi}$, we expect the solution of 
\eqref{bvec} to satisfy the \emph{Duhamel} formulation
\begin{equation} \label{Duhamel}
 \u = S(t)\u_0 + \int_0^t S(t-t')\vec{0}{\xi(t')}\d t' - \int_0^t S(t-t') \vec{0}{u^3(t')} \d t'.
 \end{equation}
From the previous discussion about $S(t)$, we have that 
\begin{equation*}
\stick_t(\xi) :=  \int_0^t S(t-t')\vec{0}{\sqrt2\xi(t')}\d t' 
\end{equation*}
is a well defined space-time distribution. In the following, when it is not ambiguous, we may omit the argument $\xi$ (i.e. $\stick_t:=\stick_t(\xi)$). We will explore more quantitative estimates about $\stick_t$ in Section 2.

Moreover, it is helpful to consider \eqref{Duhamel} as an equation for the term 
$$\v(t) := \u(t) - S(t)\u_0 - \stick_t(\xi) = - \int_0^t S(t-t') \vec{0}{u^3(t')} \d t'. $$
This is the so called Da Prato - Debussche trick (\cite{dpd02,dpd03}). With a slight abuse of notation, the equation for $\v$ becomes
\begin{equation}
\begin{aligned} \label{veqn}
\v(t) &= - \int_0^t S(t-t') \vec{0}{(S(t')\u_0 + \stick_{t'}(\xi) + v(t'))^3}\d t', \\
\end{aligned}
\end{equation}
where $(S(t')\u_0 + \stick_{t'}(\xi) + v(t'))$ is actually the first component of $(S(t')\u_0 + \stick_{t'}(\xi) + \v(t'))$. 

Following this discussion, we \emph{define} a solution for \eqref{bvec} with initial data $\u_0$ to be
$S(t)\u_0 + \stick_t(\xi) + \v(t)$, where $\v(t)$ solves \eqref{veqn}. 

In order to define a flow, we need a space $X$ such that for every $\u_0 \in X$, we can find a solution for \eqref{veqn}, and $S(t)\u_0 + \stick_t(\xi) + \v(t) \in X$ as well. 
Due to the dispersive nature of the equation, this choice is not as straightforward as in the parabolic case,
where H\"older spaces satisfy all of the required properties. On the other hand, keeping track only of the $\H^\alpha$ regularity of the initial data would hide important information about the solution, namely the gain in regularity of the term $\v$. A good space for this equation turns out to be
\begin{equation*}
\begin{gathered}
\bar X^\alpha := \big\{ \u \in S'(\T^d) \times S'(\T^d) | S(t)\u \in C([0,+\infty); \C^\alpha), \norm{S(t)\u}_{\C^\alpha}\lesssim e^{-\frac t 8} \big\},\\
\norm{u}_{X^\alpha} := \sup_{t\ge0} e^{\frac t8} \norm{S(t)\u}_{\C^\alpha},
\end{gathered}
\end{equation*}
for $0 < \alpha < \frac12$.
Here $\C^\alpha := C^\alpha \times C^{\alpha-2}$. As it is common in these situations, the particular definition
of the H\"older spaces $C^\beta$ for $\beta \not\in (0,1)$ (where they all coincide) does not play any role. In this paper,
we choose to define $\norm{u}_{C^\beta}:= \norm{(1-\Delta)^{\frac\beta2}u}_{L^\infty}$.

As it is defined, the space $\bar X^\alpha$ might not be separable, which is a helpful hypothesis for some measure theoretical considerations in the following. In order to solve this issue, we will denote by $X^\alpha$ the closure of trigonometric 
polynomials in $\bar X^\alpha$. Since we have, for $\alpha'>\alpha$, 
$$\norm{\u-P_{\le N}\u}_{X^{\alpha}} \les N^{-\frac{\alpha'-\alpha}2}\norm{\u}_{X^{\alpha'}},$$
we have that for every $\alpha' > \alpha$, if $\norm{\u}_{X^{\alpha'}} < +\infty$, then $\u \in X^\alpha$. 
\begin{Lemma}
$\bar X^\alpha$ is a Banach space.
\end{Lemma}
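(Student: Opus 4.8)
The plan is to exhibit $\bar X^\alpha$ as an isometric copy of a \emph{closed} subspace of the Banach space $C_b([0,+\infty);\C^\alpha)$ of bounded continuous $\C^\alpha$-valued functions on $[0,+\infty)$ with the supremum norm; completeness then comes for free, since a closed subspace of a Banach space is Banach. First I would dispatch the routine ingredients. Each $C^\beta$, normed by $\norm{u}_{C^\beta}=\norm{(1-\Delta)^{\beta/2}u}_{L^\infty}$, is carried isometrically onto $L^\infty$ by the invertible Fourier multiplier $(1-\Delta)^{\beta/2}$, hence is Banach; therefore $\C^\alpha=C^\alpha\times C^{\alpha-2}$ is Banach, and so is $C_b([0,+\infty);\C^\alpha)$. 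Linearity of $S(t)$ and subadditivity of $\norm{\cdot}_{\C^\alpha}$ show immediately that $\bar X^\alpha$ is a vector space and $\norm{\cdot}_{X^\alpha}$ a seminorm; it is a genuine norm because the symbol matrix at $t=0$ is the identity, so $S(0)=\id$ and $\norm{\u}_{X^\alpha}=0$ forces $\u=S(0)\u=0$.

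With this in hand, consider the map
$$\Psi:\bar X^\alpha\to C_b([0,+\infty);\C^\alpha),\qquad \Psi(\u)(t):=e^{t/8}S(t)\u.$$
The two conditions defining $\bar X^\alpha$ say exactly that $\Psi(\u)$ is a bounded continuous $\C^\alpha$-valued path, and by construction $\norm{\Psi(\u)}_{C_b([0,+\infty);\C^\alpha)}=\norm{\u}_{X^\alpha}<\infty$, so $\Psi$ is a well-defined linear isometry; it is injective since $\Psi(\u)(0)=\u$. It thus remains only to prove that the image $\Psi(\bar X^\alpha)$ is closed.

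This is the heart of the matter. Suppose $\Psi(\u_k)\to G$ in $C_b([0,+\infty);\C^\alpha)$. Evaluating at $t=0$ gives $\u_k=\Psi(\u_k)(0)\to G(0)$ in $\C^\alpha$; set $\u:=G(0)\in\C^\alpha\into S'(\T^d)\times S'(\T^d)$. I claim $\Psi(\u)=G$, i.e. $S(t)\u=e^{-t/8}G(t)$ for every fixed $t\ge0$. On one hand, $S(t)$ is a Fourier multiplier with symbol of polynomial growth in $n$, hence continuous from $S'(\T^d)\times S'(\T^d)$ into itself (this is the ``maps distributions to distributions'' statement), so $\u_k\to\u$ in $\C^\alpha\into S'(\T^d)\times S'(\T^d)$ yields $S(t)\u_k\to S(t)\u$ in $S'(\T^d)\times S'(\T^d)$. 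On the other hand $S(t)\u_k=e^{-t/8}\Psi(\u_k)(t)\to e^{-t/8}G(t)$ in $\C^\alpha$, hence in $S'(\T^d)\times S'(\T^d)$. Uniqueness of limits in $S'(\T^d)\times S'(\T^d)$ forces $S(t)\u=e^{-t/8}G(t)$. Then $t\mapsto S(t)\u=e^{-t/8}G(t)$ is continuous with $\norm{S(t)\u}_{\C^\alpha}\le\norm{G}_{C_b([0,+\infty);\C^\alpha)}e^{-t/8}$, so $\u\in\bar X^\alpha$ and $G=\Psi(\u)\in\Psi(\bar X^\alpha)$, establishing closedness. (As a byproduct, $\norm{\u_k-\u}_{X^\alpha}=\norm{\Psi(\u_k)-G}_{C_b([0,+\infty);\C^\alpha)}\to0$, giving completeness directly.)

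The only delicate step is the identification in the last paragraph: one must check that the abstract limit $G$, produced by completeness of $C_b([0,+\infty);\C^\alpha)$, really has the special form $t\mapsto e^{t/8}S(t)\u$ for a single distribution $\u$, rather than being an unrelated $\C^\alpha$-valued path. This is where the built-in compatibility of the evaluations $S(t)$ across different times enters, and it is resolved by comparing the two modes of convergence — in $\C^\alpha$ and, after applying $S(t)$, in $S'(\T^d)\times S'(\T^d)$ — together with continuity of $S(t)$ on distributions. Everything else (completeness of $C^\beta$ and of the $C_b$-valued function space, the norm axioms, $S(0)=\id$) is standard.
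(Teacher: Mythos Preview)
Your proof is correct and follows essentially the same idea as the paper's: both arguments identify the limit by passing $S(t)$ through the convergence in a weaker topology where $S(t)$ is continuous (you use $S'(\T^d)\times S'(\T^d)$, the paper uses $\H^\alpha$), and then upgrade to $X^\alpha$-convergence. Your isometric-embedding-into-$C_b$ packaging is a slightly cleaner wrapper in that norm convergence comes for free once closedness is shown, whereas the paper writes out the $\sup$--$\lim$ swap explicitly, but the substance is identical.
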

\begin{proof}
$\norm{\cdot}_{X^\alpha}$ is clearly a norm, so we just need to show completeness. Let $\u_n$ be a Cauchy sequence in $\bar X^\alpha$. By definition, for every $t$, $S(t)\u_n$ is a Cauchy sequence in $\C^\alpha$, so there exists a limit $S(t)\u_n \to \u(t)$ in $\C^\alpha$. Moreover,
$S(t)$ is a bounded operator in $\H^\alpha$, so one has that 
$$\u(t) = \C^\alpha - \lim S(t)\u_n = \H^\alpha - \lim S(t)\u_n = S(t)(\H^\alpha - \lim \u_n )= S(t)\u(0).$$ 
Lastly,
\begin{align*}
\lim_n \norm{\u_n - \u}_{X^\alpha} &= \lim_n \sup_t e^\frac t8\norm{S(t)\u_n - S(t)\u(0)}_{\C^\alpha} \\
&= \lim_n \sup_t \lim_m  e^\frac t8 \norm{S(t)\u_n - S(t)\u_m}_{\C^\alpha} \\
&\le \lim_n \lim_m \sup_t e^\frac t8 \norm{S(t)\u_n - S(t)\u_m}_{\C^\alpha} \\
& = \lim_n \lim_m \norm{\u_n - \u_m}_{X^\alpha} \\
& = 0.
\end{align*}
\end{proof}

\noindent
Since the operator $S(t)$ is \emph{not} bounded on $\C^\alpha$, the space $X^\alpha$ might appear
mysterious. However, in the next sections, we will see that the term $\stick_t(\xi)$ belongs to $X^\alpha$, as well as almost every initial data according to $\rho$, i.e.\ $\rho(X^\alpha) =~1$. Moreover, we have the following embedding for smooth functions:
\begin{Lemma}\label{H2intoX}
For every $0<\alpha<\frac12$, we have $\H^2 \subset X^\alpha$. Moreover, the identity 
$\id: \H^2 \into X^\alpha$ is a compact operator.
\end{Lemma}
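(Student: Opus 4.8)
The plan is to prove the two assertions in order: first the inclusion $\H^2 \subset X^\alpha$, then compactness of the embedding.

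For the inclusion, the key estimate is the decay bound $\norm{S(t)\u}_{\C^\alpha} \lesssim e^{-t/8}\norm{\u}_{\H^2}$ for $\u \in \H^2$. This in turn follows from two ingredients: (i) the decay estimate $\norm{S(t)\u}_{\H^2} \lesssim e^{-t/2}\norm{\u}_{\H^2}$ already recorded in the excerpt, and (ii) the Sobolev embedding $\H^2 = H^2 \times L^2 \hookrightarrow \C^\alpha = C^\alpha \times C^{\alpha-2}$ on $\T^3$, which holds precisely because $2 - \tfrac d2 = 2 - \tfrac32 = \tfrac12 > \alpha$ in the first component and $0 - \tfrac 32 > \alpha - 2$ (i.e. $\alpha < \tfrac12$) in the second. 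Composing, $\norm{S(t)\u}_{\C^\alpha} \lesssim \norm{S(t)\u}_{\H^2} \lesssim e^{-t/2}\norm{\u}_{\H^2} \le e^{-t/8}\norm{\u}_{\H^2}$, so $\sup_{t\ge 0} e^{t/8}\norm{S(t)\u}_{\C^\alpha} < \infty$, giving $\u \in \bar X^\alpha$; continuity of $t \mapsto S(t)\u$ in $\C^\alpha$ follows from strong continuity of the semigroup on $\H^2$ together with the same embedding. Finally $\H^2 \subset X^\alpha$ (and not merely $\bar X^\alpha$) because trigonometric polynomials are dense in $\H^2$ and the inclusion is bounded, so the image of $\H^2$ lies in the $X^\alpha$-closure of trigonometric polynomials; alternatively one invokes the stated bound $\norm{\u - P_{\le N}\u}_{X^\alpha} \lesssim N^{-(\alpha'-\alpha)/2}\norm{\u}_{X^{\alpha'}}$ with any $\alpha \in (\alpha, \tfrac12)$, noting $\H^2 \subset X^{\alpha'}$ by the argument just given.

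For compactness, I would factor $\id: \H^2 \to X^\alpha$ through an intermediate space $X^{\alpha'}$ with $\alpha < \alpha' < \tfrac12$: the first map $\id: \H^2 \to X^{\alpha'}$ is bounded by the above, and the second map $\id: X^{\alpha'} \hookrightarrow X^\alpha$ is compact. The compactness of the latter is exactly where the frequency-truncation estimate $\norm{\u - P_{\le N}\u}_{X^\alpha} \lesssim N^{-(\alpha'-\alpha)/2}\norm{\u}_{X^{\alpha'}}$ does the work: given a bounded sequence in $X^{\alpha'}$, the operators $P_{\le N}$ have finite-dimensional range (they map into trigonometric polynomials of degree $\le N$), hence are compact as operators $X^{\alpha'} \to X^\alpha$, and they converge to the identity in operator norm at rate $N^{-(\alpha'-\alpha)/2} \to 0$; a norm limit of compact operators is compact, so $\id: X^{\alpha'} \to X^\alpha$ is compact, and composing with the bounded map $\H^2 \to X^{\alpha'}$ yields compactness of $\id: \H^2 \to X^\alpha$.

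The main obstacle — though it is more bookkeeping than genuine difficulty — is verifying that $P_{\le N}$ is genuinely bounded from $X^{\alpha'}$ to $X^\alpha$ with finite-dimensional range in the $X$-norm topology, i.e.\ that the $X^\alpha$-norm of a trigonometric polynomial is finite and controls the finitely many Fourier coefficients; this requires checking that $S(t)$ restricted to a fixed finite frequency block is a bounded family of operators on $\C^\alpha$ uniformly in $t \ge 0$ (which is clear since each entry of the symbol is bounded by $C e^{-t/2}$ on $|n| \le N$), together with the already-quoted smoothing inequality for $\id - P_{\le N}$. Once these are in hand, the compactness is a soft functional-analytic consequence. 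One should also take momentary care that $X^{\alpha'}$ and $X^\alpha$ as defined (closures of trigonometric polynomials) are the natural domains, so that $P_{\le N}$ really does land in the defining dense class.
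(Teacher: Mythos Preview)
Your proof of the inclusion $\H^2 \subset X^\alpha$ is essentially the same as the paper's (Sobolev embedding $\H^2 \hookrightarrow \C^\alpha$ composed with the decay $\norm{S(t)\u}_{\H^2} \lesssim e^{-t/2}\norm{\u}_{\H^2}$); you are in fact slightly more careful than the paper in explaining why the image lands in $X^\alpha$ rather than merely $\bar X^\alpha$.

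For compactness, your approach is correct but genuinely different from the paper's. The paper argues by direct sequential compactness: given a bounded sequence $(\u_n)$ in $\H^2$, it passes to a weak limit $\u$, uses compactness of the Sobolev embedding $\H^2 \hookrightarrow \C^\alpha$ together with a diagonal argument over $t \in \mathbb{Q}^+$ to get $S(t)\u_n \to S(t)\u$ in $\C^\alpha$ for each rational $t$, then invokes a H\"older-in-time bound $\norm{S(t)\u - S(s)\u}_{\H^\sigma} \lesssim |t-s|^\epsilon \norm{\u}_{\H^{\sigma+4\epsilon}}$ to upgrade to uniform convergence on compact time intervals, and finally uses the exponential factor $e^{-3t/8}$ to control the tail $t \to \infty$. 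Your route instead factors through an intermediate $X^{\alpha'}$ with $\alpha < \alpha' < \tfrac12$ and shows $\id: X^{\alpha'} \to X^\alpha$ is compact as an operator-norm limit of the finite-rank operators $P_{\le N}$, using the paper's own estimate $\norm{\u - P_{\le N}\u}_{X^\alpha} \lesssim N^{-(\alpha'-\alpha)/2}\norm{\u}_{X^{\alpha'}}$. This is shorter and purely functional-analytic; it also yields the stronger statement that the embedding $X^{\alpha'} \hookrightarrow X^\alpha$ is compact, which the paper's argument does not give. The paper's approach, on the other hand, avoids any discussion of whether $P_{\le N}$ is bounded on $X$-type spaces and works directly with the time variable.
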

\begin{proof}
Let $\u \in \H^2$. By Sobolev embeddings, 
$$\norm{S(t)\u}_{\C^\alpha} \lesssim \norm{S(t)\u}_{\H^2} \lesssim e^{-\frac t2} \norm{\u}_{\H^2}$$
and given $s \ge 0$, we have 
$$\lim_{t\to s} \norm{S(t)\u-S(s)\u}_{\C^\alpha} \lesssim \limsup_{t\to s} \norm{S(t)\u-S(s)\u}_{\H^2}=0, $$
hence $\u \in X^\alpha$. 

Now let $\u_n$ be a bounded sequence in $\H^2$. By compactness of Sobolev embeddings, up to 
subsequences, $\u_n \to \u$ in $\C^\alpha$ and $\u_n \rightharpoonup \u$ weakly in $\H^s$ for every $s\le 2$. Therefore,
$S(t)\u_n \rightharpoonup S(t)\u$ weakly in $\H^s$ for every $t \ge 0$.

By a diagonal argument, up to subsequences, we have that $S(t)\u_n$ is a converging sequence in $\C^\alpha$ for every $t \in \mathbb Q^+$, so by coherence of the limits, $S(t)\u_n \to S(t)\u$ in $\C^\alpha$ 
for every $t \in \mathbb Q^+$. By the property 
$$\partial_t S(t) = -\begin{pmatrix} 0 & -1 \\ 1 + \Delta^2 & 1\end{pmatrix}S(t),$$ 
we have that $\norm{{S(t)\u-S(s)}\u}_{\H^s}\lesssim |t-s|^\epsilon \norm{\u}_{\H^{s+4\epsilon}}$. Therefore,
by taking $\epsilon$ such that $\alpha + 4\epsilon + \frac32 < 2$, by the Sobolev embedding 
$\H^{2-4\epsilon} \into \C^\alpha$, we have that
$S(t)\u_n \to S(t)\u$ in $\C^\alpha$ for every $t\ge 0 $ and uniformly on compact sets. Finally, for every $T$ we have
\begin{align*}
e^{\frac t8} \norm{S(t)\u_n - S(t)\u}_{\C^\alpha} \lesssim 
e^{\frac T8} \sup_{s\in [0,T]} \norm{S(s)\u_n - S(s)\u}_{\C^\alpha} + e^{-\frac38 T}\sup_n \norm{\u_n}_{\H^2}.
\end{align*}
For $T\gg 1$ big  enough and $n \gg 1$ (depending on $T$), we can make the right hand side arbitrarily small. Therefore, we get $\norm{\u_n-\u}_{X^\alpha} \to 0$ as $n \to \infty$, so $\id$ is compact.
\end{proof}
However, the space $X^\alpha$ is strictly bigger than $\H^2$, and it contains functions at regularity exactly $\alpha$. Indeed, we have
\begin{Lemma} \label{XalphaNotH2}
For every $\alpha_1 > \alpha > 0$, there exists $\u_0 \in X^\alpha$ such that $\u_0 \not \in \H^{\alpha_1}$.
\end{Lemma}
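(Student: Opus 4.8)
The plan is to exhibit such a $\u_0$ explicitly, of the form $\u_0 = \vec{u_0}{0}$ with $u_0$ a Fourier series supported on a lacunary set of frequencies. The point that makes this succeed despite $S(t)$ being unbounded on $\C^\alpha$ is that $S(t)$ acts on a single exponential $e^{in\cdot x}$ by multiplication by the (essentially bounded) entries of its symbol matrix, so the linear evolution of a sparse sum of exponentials is trivial to control in $\C^{\alpha'}$. Concretely: fix $\alpha'$ with $\alpha < \alpha' < \min(\alpha_1,\tfrac12)$ — this interval is nonempty since $\alpha < \alpha_1$ and $\alpha < \tfrac12$ — choose $n_j \in \Z^3$ with $2^j \le |n_j| < 2^{j+1}$, set $c_j := |n_j|^{-\alpha'} j^{-2}$, and let $u_0 := \sum_{j\ge 1} c_j e^{in_j\cdot x}$, $\u_0 := \vec{u_0}{0}$.

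The first step would be to verify $\u_0\in X^\alpha$. Reading off the symbol, $S(t)\vec{e^{in\cdot x}}{0} = e^{-t/2}\vec{m_t(n)\,e^{in\cdot x}}{\widetilde m_t(n)\,e^{in\cdot x}}$ with $m_t(n)=\cos(t\lambda_n)+\tfrac{\sin(t\lambda_n)}{2\lambda_n}$, $\widetilde m_t(n)=-(\lambda_n-\tfrac1{4\lambda_n})\sin(t\lambda_n)$ and $\lambda_n=\sqrt{\tfrac34+|n|^4}$; since $\lambda_n\gtrsim 1$ and $\lambda_n\lesssim 1+|n|^2$ one has $|m_t(n)|\lesssim1$, $|\widetilde m_t(n)|\lesssim 1+|n|^2$ uniformly in $t\ge0$. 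Using $\norm{e^{in\cdot x}}_{C^\beta}=(1+|n|^2)^{\beta/2}$ this yields $\norm{S(t)\vec{e^{in\cdot x}}{0}}_{\C^{\alpha'}}\lesssim e^{-t/2}(1+|n|^2)^{\alpha'/2}$, with $t\mapsto S(t)\vec{e^{in\cdot x}}{0}$ continuous into $\C^{\alpha'}$. Since $\sum_j c_j|n_j|^{\alpha'}=\sum_j j^{-2}<\infty$, summing over $j$ (the series converging uniformly in $t$) gives $S(t)\u_0\in C([0,\infty);\C^{\alpha'})$ with $\norm{S(t)\u_0}_{\C^{\alpha'}}\lesssim e^{-t/2}\le e^{-t/8}$. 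Hence $\u_0\in\bar X^{\alpha'}$ with $\norm{\u_0}_{X^{\alpha'}}<\infty$, so $\u_0\in X^\alpha$ by the inclusion $\norm{\u-P_{\le N}\u}_{X^\alpha}\lesssim N^{-(\alpha'-\alpha)/2}\norm{\u}_{X^{\alpha'}}$ recorded above.

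The second step would be to verify $\u_0\notin\H^{\alpha_1}$. This is immediate from Parseval: $\norm{u_0}_{H^{\alpha_1}}^2\sim\sum_{j\ge1}c_j^2|n_j|^{2\alpha_1}\sim\sum_{j\ge1}j^{-4}|n_j|^{2(\alpha_1-\alpha')}\gtrsim\sum_{j\ge1}j^{-4}2^{2j(\alpha_1-\alpha')}=+\infty$, since $\alpha_1>\alpha'$ (the exponential factor beats the polynomial one). Therefore $u_0\notin H^{\alpha_1}$ and $\u_0\notin\H^{\alpha_1}$, which completes the argument.

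I do not expect a genuine obstacle here; the only delicate point is the balance of exponents. Membership of $u_0$ in the $L^\infty$-based space $\C^{\alpha'}$ and the control of the linear flow both force the $\ell^1$-type bound $\sum_j c_j|n_j|^{\alpha'}<\infty$, whereas failure of $H^{\alpha_1}$-regularity requires the weighted $\ell^2$ sum $\sum_j c_j^2|n_j|^{2\alpha_1}$ to diverge; these are simultaneously achievable precisely because the frequencies are taken lacunary and $\alpha_1>\alpha'$, so a polynomial saving in the coefficients is overcompensated by the $\ell^2$-versus-$\ell^1$ and $2\alpha_1$-versus-$\alpha'$ discrepancies. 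Using individual exponentials rather than full dyadic blocks is what avoids any oscillatory-multiplier ($\norm{M_t}_{L^\infty\to L^\infty}$) estimate for $S(t)$ on $\C^\alpha$; a block-based construction would instead require randomising the Fourier coefficients, which is unnecessary here.
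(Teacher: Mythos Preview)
Your proof is correct and takes a genuinely different route from the paper's. The paper argues by contradiction: assuming $X^\alpha \subseteq \H^{\alpha_1}$, the closed graph theorem gives $\norm{\u}_{\H^{\alpha_1}} \lesssim \norm{\u}_{X^\alpha}$, and testing this on the single exponentials $\u_n = \vec{e^{in\cdot x}}{0}$ yields $\jap{n}^{\alpha_1} \lesssim \jap{n}^\alpha$, a contradiction. You instead build an explicit witness as a lacunary sum of exponentials. Both arguments rest on the same elementary observation --- that a single frequency has $X^{\alpha'}$-norm comparable to $\jap{n}^{\alpha'}$ because $S(t)$ acts on $e^{in\cdot x}$ by a bounded scalar symbol --- but package it differently. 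The paper's route is shorter and avoids any infinite sum, at the cost of invoking the closed graph theorem and being non-constructive; yours is direct, self-contained, and actually exhibits a concrete $\u_0$, which is arguably more informative (for instance, it gives an explicit $\bar\u_0$ of the type used in the failure-of-strong-Feller discussion at the start of Section~5).
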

\begin{proof}
Suppose by contradiction that $X^\alpha \subseteq \H^{\alpha_1}$. By the closed graph theorem, this implies that 
\begin{equation}\label{XalphainHalpha1}
\norm{\u}_{\H^{\alpha_1}} \les \norm{\u}_{X^\alpha}.
\end{equation}
For $n \in \Z^3$, consider $\u_n := \vec{e^{in \cdot x}}{0}$. By definition of $S(t)$, 
$$S(t) \u_n = e^{-\frac t 2} \vec{\left(\cos\Big(t\sqrt{\frac34+|n|^4}\Big) + \frac12\frac{\sin\Big(t\sqrt{\frac34+|n|^4}\Big)}{\sqrt{\frac34+|n|^4}}\right)e^{in\cdot x}}{\frac{\sin\Big(t\sqrt{\frac34+|n|^4}\Big)}{\sqrt{\frac34+|n|^4}}e^{i n\cdot x}}.$$
It is easy to check that $\norm{S(t)\u_n}_{\C^\alpha} \sim e^{-\frac t2} \jap{n}^\alpha$, so $\norm{S(t)\u_n}_{X^\alpha} \sim \jap{n}^\alpha$. 
On the other hand, $\norm{\u_n}_{\H^{\alpha_1}} \sim \jap{n}^{\alpha_1}$. By \eqref{XalphainHalpha1}, this implies $\jap{n}^{\alpha_1} \les \jap{n}^{\alpha}$, which is a contradiction.
\end{proof}

\subsection{Truncated system}
In order to prove invariance of the measure $\mu$, it will be helpful to introduce a truncated system. While
many truncations are possible, for this particular class of systems it is helpful to introduce the sharp 
Fourier truncation $P_{\le N}$, $N \in \N \cup \{0\}$ given by 
\begin{equation*}
P_{\le N} \u (x) := \frac1{(2\pi)^3} \sum_{\max_j{|n_j|} \le N} \widehat{\u}(n) e^{in\cdot x}, 
\end{equation*}
 i.e.\ the sharp restriction on the cube $[-N,N]^3$ in Fourier variable. Similarly, we define 
 $P_{>N}:= 1- P_{\le N}$. While this is a somewhat odd choice for the truncation, it has the advantages that
 $P_{>N}$ and $P_{\le N}$ have orthogonal ranges, and 
 $\norm{P_{\le N}u}_{L^p} \lesssim_p \norm{u}_{L^p}$ uniformly in $N$ for every $1<p<+\infty$ (since it corresponds to the composition of the Hilbert transform in every variable).
 
It is convenient for notation to allow also $N=-1$, in which case $P_{\le N} = 0$ and $P_{>N} = \id$. Therefore,
we define the \emph{truncated} system to be 
\begin{equation} \label{bvecN}
\left\{\begin{aligned} 
&\partial_t \begin{pmatrix} u \\ u_t \end{pmatrix} &= &
-\begin{pmatrix} 0 & -1 \\ 1 + \Delta^2 & 1\end{pmatrix}\begin{pmatrix} u \\ u_t \end{pmatrix} - P_{\le N}\begin{pmatrix} 0 \\ (P_{\le N}u)^3  \end{pmatrix} + \begin{pmatrix} 0 \\ \sqrt2\xi \end{pmatrix}, \\
&\begin{pmatrix} u \\ u_t \end{pmatrix} (0) &=& \begin{pmatrix} u_0 \\ u_1 \end{pmatrix} \in X^\alpha.
\end{aligned}\right.
\end{equation}
In a similar fashion to \eqref{bvec}, we will write solutions to this system as $S(t) \u_0 + \stick_t(\xi)  + \v_N$, where
$\v$ solves the equation
\begin{equation}
\begin{aligned} \label{veqnN}
\v_N(t) &= - \int_0^t S(t-t') P_{\le N} \vec{0}{P_{\le N}(S(t')\u_0 + \stick_{t'}(\xi) + v_N(t'))^3}. \\
\end{aligned}
\end{equation}
\subsection{Notation and conversion to the general case}
In the following, $\H^\alpha$ will denote the Sobolev space $H^\alpha \times H^{\alpha-2}$, with norm given by
$$\norm{\u}_{\H^\alpha}^2 :=  \norm{(1-\Delta)^\frac\alpha2 u}_{L^2}^2 + \norm{(1-\Delta)^{\frac\alpha2-1} u_t}_{L^2}^2.$$
Similarly, $\W^{\alpha,p}$ will denote the Sobolev space $W^{\alpha,p} \times W^{\alpha-2,p}$ with norm given by
$$\norm{\u}_{\W^\alpha}^p :=  \norm{(1-\Delta)^\frac\alpha2 u}_{L^p}^p + \norm{(1-\Delta)^{\frac\alpha2-1} u_t}_{L^p}^p $$
and as already discussed, $\C^\alpha := C^\alpha \times C^{\alpha-2}$, with norm given by 
$$\norm{\u}_{\C^\alpha}:= \max\left(\norm{(1-\Delta)^{\frac\alpha2}u}_{L^\infty}, \norm{(1-\Delta)^{\frac\alpha2-1}u_t}_{L^\infty}\right). $$
In order to convert the argument presented in this paper into the one for the general case, we make the following modifications:
$$\H^\alpha :=  H^\alpha \times H^{\alpha-\frac s2},\hspace{6pt} \W^{\alpha,p} = W^{\alpha,p} \times W^{\alpha-\frac s2,p},\hspace{6pt}
\C^\alpha := C^\alpha \times C^{\alpha-\frac s2},$$
with the analogous modifications of the norms. Moreover, $S(t)$ would denote the linear propagator for \eqref{general}, and 
\begin{equation*}
\begin{gathered}
\bar X^\alpha := \big\{ \u | S(t)\u \in C([0,+\infty); \C^\alpha), \norm{S(t)\u}_{\C^\alpha}\lesssim e^{-\frac t 8} \big\},\\
\norm{u}_{X^\alpha} := \sup_{t>0} e^{\frac t8} \norm{S(t)\u}_{\C^\alpha}
\end{gathered}
\end{equation*}
is defined for $0 < \alpha < \frac{s-d}2$.
Moreover, in the following discussion, the space $\H^2$ has to be substituted by the space $\H^\frac s2$, and 
any threshold of the regularity in the form $\alpha < \frac12$ has to be substituted by $\alpha < \frac{s-d}2$.
\section{Stochastic objects} \label{stochastic_section}

This section is dedicated to building the stochastic objects that we will need throughout the paper and 
to proving the relevant estimates about them, in the case $s=4, d=3$. More precisely, in the first subsection we prove that 
$\stick_t \in C([0,+\infty);\C^\alpha)$ and $\stick_t \in X^\alpha$ almost surely. In the second subsection, we
build the Gibbs measure(s) and we prove that they are actually concentrated on $X^\alpha$. 

\subsection{Stochastic convolution}
We will use that the space-time white noise is a distribution-valued random variable such that, for every 
$\phi,\psi \in C^\infty_c(\R\times\T^d)$, 
\begin{equation*}
\E[\dual{\phi}{\xi}\dual{\psi}{\xi}] = \dual{\phi}{\psi}_{L^2(\R\times\T^d)}.
\end{equation*}
\begin{Proposition}\label{stickreg}
For every $\alpha < \frac12$, 
\begin{equation*}
\E\norm{\stick_t }_{\C^\alpha}^2 < +\infty.
\end{equation*}
Moreover, $\stick_t \in C([0,+\infty);\C^\alpha)$ almost surely.
\end{Proposition}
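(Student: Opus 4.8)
The plan is to compute the Fourier series of $\stick_t$ explicitly and estimate its Littlewood--Paley blocks, then promote the pointwise-in-$t$ bound to a continuity statement via Kolmogorov's criterion. First I would write $\stick_t(\xi) = \int_0^t S(t-t')(0,\sqrt 2\,\xi(t'))^{\mathrm T}\,\d t'$ and, using the explicit symbol of $S(t)$, extract the two components as Gaussian Fourier series in $n \in \Z^3$. For the first component one gets, schematically,
\begin{equation*}
\widehat{\stick_t^{(1)}}(n) = \sqrt 2 \int_0^t e^{-\frac{t-t'}2}\,\frac{\sin\!\big((t-t')\sqrt{\tfrac34+|n|^4}\big)}{\sqrt{\tfrac34+|n|^4}}\,\d\beta_n(t'),
\end{equation*}
with $\{\beta_n\}$ a family of (complex, with the usual reality constraint $\beta_{-n}=\overline{\beta_n}$) independent Brownian motions coming from testing $\xi$ against the Fourier modes; the second component has the same integrand without the $1/\sqrt{\tfrac34+|n|^4}$ factor. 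Since everything is Gaussian, $\E|\widehat{\stick_t^{(1)}}(n)|^2$ is the $L^2_{t'}$ norm of the kernel, which is $\lesssim \jap{n}^{-4}$ for the first component and $\lesssim \jap{n}^{0}$ for the second (the damping makes the time integral bounded uniformly in $t$). So $\stick_t^{(1)}$ has the regularity of $\jap{n}^{-2}\times$white noise in space, i.e.\ $H^{2-\frac32-}$, and $\stick_t^{(2)}$ that of white noise, i.e.\ $H^{-\frac32-}$; in both cases this is $\C^\alpha$ for $\alpha<\frac12$, consistent with the definition $\C^\alpha = C^\alpha\times C^{\alpha-2}$.

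To make this rigorous at the level of $C^\alpha = B^\alpha_{\infty,\infty}$, I would fix a Littlewood--Paley projector $\Delta_j$ and bound $\E\|(1-\Delta)^{\alpha/2}\Delta_j \stick_t^{(1)}\|_{L^\infty_x}$; by Gaussian hypercontractivity it suffices to control an $L^q_x$ moment for large $q$, and $\E|(1-\Delta)^{\alpha/2}\Delta_j\stick_t^{(1)}(x)|^2 \lesssim \sum_{|n|\sim 2^j}\jap{n}^{2\alpha-4}\lesssim 2^{j(2\alpha-4+3)} = 2^{j(2\alpha-1)}$, which is summable in $j$ precisely when $\alpha<\frac12$; the same computation for $\stick_t^{(2)}$ gives $2^{j(2\alpha-4+3)}$ after accounting for the shift by $2$ in the definition of $\C^\alpha$, again summable for $\alpha<\frac12$. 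Combining with the $L^q$-to-$L^\infty$ passage (Besov embedding $B^{\alpha+\epsilon}_{\infty,q}\hookrightarrow B^\alpha_{\infty,\infty}$ plus $\|\cdot\|_{L^\infty}\lesssim\|\cdot\|_{L^q}$ on frequency-localised functions) yields $\E\|\stick_t\|_{\C^\alpha}^2<+\infty$.

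For the continuity statement, the strategy is Kolmogorov's continuity theorem applied to $t\mapsto \stick_t$ as a $\C^{\alpha'}$-valued process for some $\alpha'$ slightly larger than $\alpha$. I would estimate $\E\|\stick_t-\stick_s\|_{\C^{\alpha'}}^2$ by splitting $\stick_t - \stick_s = (S(t-s)-\id)\stick_s + \int_s^t S(t-t')(0,\sqrt2\xi)^{\mathrm T}\d t'$: the second term is, by the same Gaussian computation, bounded by $|t-s|^\theta$ times a $\C^{\alpha'}$-bounded quantity for some $\theta>0$ as long as $\alpha'<\frac12$ (the time integral over $[s,t]$ of the kernel squared contributes the extra smallness), and the first term is handled using $\|(S(\tau)-\id)\mathbf w\|_{\C^{\alpha'}}\lesssim \tau^\theta\|\mathbf w\|_{\C^{\alpha'+4\theta}}$ together with the already-established bound $\E\|\stick_s\|_{\C^{\alpha'+4\theta}}^2<\infty$ for $\alpha'+4\theta<\frac12$. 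Choosing the high moment version (which again comes for free from Gaussianity) gives a Kolmogorov exponent good enough to conclude Hölder continuity in $t$ with values in $\C^\alpha$, hence in particular $\stick_t\in C([0,+\infty);\C^\alpha)$ a.s. The main obstacle is purely bookkeeping: tracking the two different regularities of the two components through the matrix symbol of $S(t)$, and making sure the damping factor $e^{-(t-t')/2}$ is genuinely used so that all time integrals are uniform in $t$ rather than growing; once the Fourier-side variances are correctly identified, everything else is a standard hypercontractivity-plus-Kolmogorov routine.
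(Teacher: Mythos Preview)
Your fixed-time argument is essentially the paper's, just packaged through Littlewood--Paley blocks rather than through $\W^{\alpha,q}$ and Sobolev embedding; both routes are standard and correct.

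The gap is in your continuity argument. You split $\stick_t-\stick_s=(S(t-s)-\id)\stick_s+\int_s^t S(t-t')(0,\sqrt2\xi)^{\mathrm T}\d t'$ and then invoke the deterministic bound
\[
\|(S(\tau)-\id)\mathbf w\|_{\C^{\alpha'}}\lesssim \tau^\theta\|\mathbf w\|_{\C^{\alpha'+4\theta}}.
\]
This estimate is not available: the propagator $S(\tau)$ is \emph{not} bounded on $\C^\alpha$ (the paper says so explicitly, and this is exactly why the space $X^\alpha$ is introduced). The symbol of $S(\tau)$ contains $\cos(\tau\lambda_n)$ with $\lambda_n\sim|n|^2$, which is a Schr\"odinger-type oscillatory multiplier; such multipliers fail Mikhlin's condition and are unbounded on $L^\infty$ (and on $L^p$ for $p\neq 2$). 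Consequently there is no reason for $(S(\tau)-\id)$ to map $\C^{\alpha'+4\theta}\to\C^{\alpha'}$ with the claimed $\tau^\theta$ gain, and combining the (false) operator bound with $\E\|\stick_s\|_{\C^{\alpha'+4\theta}}^2<\infty$ does not yield anything.

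The paper sidesteps this by never splitting: it estimates the covariance $\gamma(t,s)[\f]$ of $\langle\stick_t,\f\rangle$ directly, shows $\gamma$ is bounded in one Sobolev scale and Lipschitz in another, interpolates to get $\E\|\stick_{t+h}-\stick_t\|_{\H^\alpha}^2\lesssim|h|^{(1-2\alpha-\epsilon)/4}$, and only \emph{then} passes to $\W^{\alpha,q}$ via translation invariance and hypercontractivity, finishing with Kolmogorov and a Sobolev embedding into $\C^\beta$. Your argument is easily repaired along the same lines: either compute $\E\|\stick_t-\stick_s\|_{\C^{\alpha'}}^2$ directly in Fourier (where the symbol bound $|(S(\tau)-\id)^\wedge(n)|\lesssim(\tau\lambda_n)^{2\theta}$ enters only in the $\ell^2$ variance sum, which is legitimate), or run the splitting in $\H^s$ where $S(\tau)$ \emph{is} bounded and then upgrade to H\"older at the end. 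But as written, the deterministic $\C^\alpha$ operator estimate is the step that fails.
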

\begin{proof}
For a test function $\f = \vec{f}{f_t}$, define
\begin{equation}\label{eq:covariance}
\gamma(t,s)[\f] := \E \dual{\Stick_t(\xi)}{\f}\dual{\Stick_s(\xi)}{\f},
\end{equation}
where $\dual{\f}{\g} = \int f\overline g + \int f_t\overline{g_t}$. We have that
\begin{align*}
\dual{\Stick_t}{\f} & = \int_0^t \dual{ S(t-t')\vec{0}{\sqrt 2 \xi(t')}}{\f} \\
&=\sqrt 2 \dual{\xi}{\pi_2 S(t-t')^\ast \f}_{L^2_{t,x}},
\end{align*}
where $\pi_2$ is the projection on the second component. Therefore, by definition of $\xi$,
\begin{equation}\label{gamman}
\gamma(t,s)[\f] = 2\int_0^{t \wedge s} \dual{\Re \big(\pi_2 S(t-t')^\ast \f\big)}{\Re \big(\pi_2 S(s-t')^\ast \f\big)}.
\end{equation}
Hence, by boundedness of $S(t)$, we have that $\gamma(t,s)[\f] \lesssim \norm{f}_{H^{-2}}^2 + \norm{f_t}_{L^2}^2$. Moreover, 
since 
$$\partial_t S(t) = -\begin{pmatrix} 0 & -1 \\ 1 + \Delta^2 & 1\end{pmatrix}S(t),$$ 
we have $\Lip(\gamma(\cdot,\cdot)[\f])\lesssim \norm{f}_{L^2}^2 + \norm{f_t}_{H^2}^2$, so by interpolation, for every $0 \le \theta \le 1$, we have 
$$\big|\gamma(t,s)-\gamma(t',s')\big|[\f] \les (\norm{f}_{H^{-2(1+\theta)}}^2 + \norm{f_t}_{H^{2\theta}}^2) (|t-t'| + |s-s'|)^\theta.$$
Therefore, choosing $\theta = \frac{1-2\alpha-\epsilon}{4}$, we have
\begin{align}
&\E \norm{\Stick_{t+h}(\xi)-\Stick_{t}(\xi)}_{\H^\alpha}^2\notag \\
\lesssim &\sum_{n\in\Z^3} \jap{n}^{2\alpha} \Big(\gamma(t+h,t+h) - 2\gamma(t+h,t) + \gamma(t,t)\Big)\left[\vec{e^{in\cdot x}}{0}\right]\notag  \\
&+ \sum_{n\in\Z^3} \jap{n}^{2\alpha} \Big(\gamma(t+h,t+h) - 2\gamma(t+h,t) + \gamma(t,t)\Big)\left[\vec{0}{\jap{n}^{-2}e^{in\cdot x}}\right]
\notag \\
\lesssim &\sum_{n\in\Z^3} \jap{n}^{2\alpha} \jap{n}^{-3-2\alpha-\epsilon} |h|^\frac{1-2\alpha-\epsilon}4 \lesssim |h|^\frac{1-2\alpha-\epsilon}4. \label{stickhalpha}
\end{align}
By translation invariance of the operator $S(t)$, we have that  
$$\E |\jap\nabla^{-\alpha}(\Stick_{t+h}-\Stick_{t})(x)|^2 = \E |\jap\nabla^{-\alpha}(\Stick_{t+h}-\Stick_{t})(y)|^2$$
for every $x,y \in \T$, so 
\begin{align*}
\E |\jap\nabla^{\alpha}(\Stick_{t+h}-\Stick_{t})(x)|^2 &\sim \int_\T \E |\jap\nabla^{\alpha}(\Stick_{t+h}-\Stick_{t})(x)|^2 \d x \\
&\le \E \norm{\Stick_{t+h}-\Stick_{t}}_{\H^\alpha}^2 \\
&\lesssim |h|^\frac{1-2\alpha-\epsilon}4. 
\end{align*}
and similarly 
\begin{equation*}
\E |\jap\nabla^{-2+\alpha}\partial_t(\Stick_{t+h}-\Stick_{t})(x)|^2 \lesssim |h|^\frac{1-2\alpha-\epsilon}4.
\end{equation*}
By hypercontractivity, (or since $\jap\nabla^{-\alpha}(\Stick_{t+h}-\Stick_{t})(x)$ is Gaussian), 
$$\E |\jap\nabla^{\alpha}(\Stick_{t+h}-\Stick_{t})(x)|^p \lesssim_p  \Big(\E |\jap\nabla^{\alpha}(\Stick_{t+h}-\Stick_{t})(x)|^2\Big)^\frac p2 \lesssim |h|^\frac{p(1-2\alpha-\epsilon)}4$$
and 
$$\E |\jap\nabla^{-2+\alpha}\partial_t(\Stick_{t+h}-\Stick_{t})(x)|^p \lesssim_p |h|^\frac{p(1-2\alpha-\epsilon)}4$$
so for $p>q$,
\begin{equation} \label{stickwalpha}
\begin{aligned}
& \E \norm{\Stick_{t+h}-\Stick_{t}}_{\W^{\alpha,q}}^p \\
&= \E  \Big(\int |\jap\nabla^{\alpha}(\Stick_{t+h}-\Stick_{t})(x)|^q \d x + \int |\jap\nabla^{-2+\alpha}\partial_t(\Stick_{t+h}-\Stick_{t})(x)|^q \d x\Big)^\frac pq \\
 &\lesssim \E \int |\jap\nabla^{\alpha}(\Stick_{t+h}-\Stick_{t})(x)|^p +  \E \int |\jap\nabla^{-2+\alpha}\partial_t(\Stick_{t+h}-\Stick_{t})(x)|^p\d x \\
 & \lesssim_p |h|^\frac{p(1-2\alpha-\epsilon)}4.
\end{aligned}
\end{equation}
Therefore, by Kolmogorov Continuity Theorem, if $\alpha < \frac12$, by taking $p$ big enough in 
such a way that $\frac{p(1-2\alpha-\epsilon)}4>1$, 
we have $\Stick_t \in C_t^{\frac{(1-2\alpha-\epsilon)}4 - \frac1p}\W^{\alpha,q}.$ For every $\beta < \alpha$, 
by Sobolev embeddings we can find $q<+\infty$ s.t. $\W^{\alpha,q} \subset \C^{\beta}$.
From this we get that $\Stick_t \in C_t\C^{\beta}$.
\end{proof}
\begin{Proposition} \label{stickX}
For every $t>0$, $\Stick_t \in X^\alpha$ a.s. More precisely,
\begin{equation*}
\sup_{s>0} \norm{e^{\frac s8} S(s)\Stick_t}_{\C^\alpha} < +\infty \text{ a.s.} \label{eq:stickX}
\end{equation*}
for every $\alpha < \frac12$.
\end{Proposition}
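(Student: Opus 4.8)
The plan is to reduce the claim to estimates on the iterated propagator $S(s)\Stick_t$ that are analogous to those already obtained for $\Stick_t$ in Proposition \ref{stickreg}, but now with an extra exponential decay factor $e^{s/8}$ in the $s$-variable. Concretely, observe that $S(s)\Stick_t(\xi) = \int_0^t S(s)S(t-t')\vec{0}{\sqrt2\xi(t')}\d t' = \int_0^t S(s+t-t')\vec{0}{\sqrt2\xi(t')}\d t'$, which is simply $\Stick_{t}$ evaluated with its ``outgoing'' time shifted to $s+t$ while the integration still runs only over $[0,t]$. So $S(s)\Stick_t$ is again a Gaussian process (in $s$ and $x$), and its covariance can be written via \eqref{gamman} as $2\int_0^{t}\dual{\Re(\pi_2 S(s+t-t')^\ast\f)}{\Re(\pi_2 S(s'+t-t')^\ast\f)}\d t'$ for a test function $\f$. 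First I would compute the relevant variance: for fixed $s$ and $n$, using $\norm{S(\tau)}\lesssim e^{-\tau/2}$ on $\H^\alpha$ and the explicit symbol of $S$, one gets $\E\norm{S(s)\Stick_t}_{\H^\alpha}^2 \lesssim \sum_{n}\jap n^{2\alpha}\int_0^t e^{-(s+t-t')}\jap n^{-4}\d t' \lesssim e^{-s}\sum_n \jap n^{2\alpha-4} < +\infty$, which already contains the decay $e^{-s}$, i.e.\ far better than the required $e^{-s/4}$.

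Next I would promote this to a statement uniform in $s$, following the same Kolmogorov-continuity scheme used in Proposition \ref{stickreg}. The point is to control increments in $s$: mimicking the interpolation argument there (variance bounded, Lipschitz-in-time bound coming from $\partial_\tau S(\tau) = -\left(\begin{smallmatrix}0&-1\\1+\Delta^2&1\end{smallmatrix}\right)S(\tau)$), one obtains for the Gaussian field $G_s(x) := e^{s/8}\jap\nabla^\alpha \pi_1 S(s)\Stick_t(x)$ (and its $\pi_2$ analogue) a bound of the form $\E|G_{s+h}(x)-G_s(x)|^2 \lesssim \jap h^{\theta}$ with summable-in-$n$ constants, hence by hypercontractivity $\E|G_{s+h}(x)-G_s(x)|^p\lesssim_p |h|^{p\theta'}$ and, via Sobolev embedding $\W^{\alpha,q}\hookrightarrow\C^{\beta}$ for $q$ large, $\E\norm{e^{(s+h)/8}S(s+h)\Stick_t - e^{s/8}S(s)\Stick_t}_{\C^\beta}^p \lesssim_p |h|^{p\theta'}$ for any $\beta<\alpha$. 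Kolmogorov's continuity theorem then gives that $s\mapsto e^{s/8}S(s)\Stick_t$ is a.s.\ continuous from $[0,\infty)$ into $\C^\beta$; combined with the decay $\E\norm{e^{s/8}S(s)\Stick_t}_{\C^\beta}^2 \lesssim e^{-s/4}\to 0$ (and a Borel–Cantelli / maximal-inequality argument along integer times to upgrade pointwise-in-$s$ control to a uniform bound), one concludes $\sup_{s>0}\norm{e^{s/8}S(s)\Stick_t}_{\C^\beta} < +\infty$ a.s. Since Proposition \ref{stickreg} was stated for all $\alpha<\frac12$, running this for $\alpha'\in(\beta,\frac12)$ and relabelling recovers the full range $\alpha<\frac12$, which is exactly $\Stick_t\in X^\alpha$ a.s.

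The main obstacle I anticipate is the uniformity in $s$ up to $s=+\infty$: Kolmogorov's theorem on the noncompact half-line $[0,\infty)$ only gives local modulus-of-continuity control, so one must separately exploit the $e^{-s/4}$ decay of the second moment to get a global supremum bound. The clean way is to apply Kolmogorov on each dyadic or unit interval $[k,k+1]$, getting $\E\sup_{s\in[k,k+1]}\norm{e^{s/8}S(s)\Stick_t}_{\C^\beta}^p \lesssim e^{-kp/8}$ (with constants uniform in $k$, using that the increment estimates are translation-invariant in $s$ after extracting the exponential weight), then sum over $k$ and invoke Borel–Cantelli to conclude the sup over all $s\ge0$ is finite a.s. A secondary, purely bookkeeping, subtlety is that $S(s)$ is not bounded on $\C^\alpha$, so one cannot phrase anything directly at the $\C^\alpha$ level; every estimate must be set up at the level of the Gaussian field $\jap\nabla^\alpha S(s)\Stick_t$ evaluated pointwise in $x$, exactly as in Proposition \ref{stickreg}, and only at the end passed to $\C^\beta$ via Sobolev embedding — this costs an arbitrarily small amount of regularity, harmlessly absorbed by the freedom in choosing $\alpha<\frac12$.
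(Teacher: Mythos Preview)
Your proposal is correct and follows essentially the same route as the paper: compute the covariance of $S(s)\Stick_t$, extract the factor $e^{-s}$, obtain $\H^\alpha$-increment bounds by interpolation, upgrade to $\W^{\alpha,q}$ moments via hypercontractivity and translation invariance, apply Kolmogorov's continuity theorem on each unit interval $[N,N+1]$, and conclude the global supremum is finite via Borel--Cantelli together with Sobolev embedding into $\C^\beta$. The only cosmetic difference is that the paper keeps the weight $e^{s/8}$ outside and tracks the decay $e^{-pN/2}$ of $S(\cdot)\Stick_t$ directly, whereas you fold the weight into the field $G_s$; both lead to the same conclusion.
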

\begin{proof}
Let $\f=\vec{f}{f_t}$ be a test function, and let $\tilde\gamma$ to be such that
$$\E\dual{S(r)\Stick_t}{\f}\dual{S(s)\Stick_t}{\f} = e^{-\frac{r+s}2}\tilde\gamma(t,s)[\f].$$
As for \eqref{eq:covariance}, we have the analogous of \eqref{gamman}
\begin{equation*}
\tilde \gamma(t,s)[\f] = 2\int_0^{t} \dual{\Re \pi_2 e^{\frac r2}S(r+t-t')^\ast \f}{\Re \pi_2 e^{\frac s2} S(s+t-t')^\ast \f}.
\end{equation*}
Therefore, exactly as for \eqref{eq:covariance}, we have
$\tilde \gamma(t,s)[\f] \lesssim \norm{f}_{H^{-2}}^2 + \norm{f_t}_{L^2}^2$ and $\Lip(\tilde \gamma(\cdot,\cdot)[\f])\lesssim \norm{f}_{L^2}^2 + \norm{f_t}_{H^2}^2$.
Therefore, proceeding as in \eqref{stickhalpha},
\begin{equation*} 
\E \norm{S(s+h)\Stick_{t}-S(s)\Stick_{t}}_{\H^\alpha}^2 \lesssim e^{-s} |h|^\frac{1-2\alpha-\epsilon}4,
\end{equation*}
and arguing as in \eqref{stickwalpha}, for every $p>q$,
\begin{equation*}
\E \norm{S(s+h)\Stick_{t}-S(s)\Stick_{t}}_{\W^{\alpha,q}}^p \lesssim_p e^{-\frac p2 s} |h|^\frac{p(1-2\alpha-\epsilon)}4.
\end{equation*}
Therefore, by Kolmogorov Continuity Theorem, $S(\cdot)\Stick_t \in C_s \W^{\alpha,q}$ a.s. and 
\begin{equation*}
\E \norm{S(\cdot)\Stick_t}_{C_s([N,N+1]; \W^{\alpha,q})}^p \lesssim_p e^{-\frac p2 N}.
\end{equation*}
Therefore, $\P(\norm{S(\cdot)\Stick_t}_{C_t([N,N+1]; \W^{\alpha,q})} > e^{-\frac N4}) \lesssim_p e^{-\frac p4 N}$,
which is summable in $N$, so by Borel-Cantelli  $\norm{S(\cdot)\Stick_t}_{C_s([N,N+1]; \W^{\alpha,q})} \le e^{-\frac N4}$
definitely. Taking $\beta <~\alpha$ and $q$ big enough, by Sobolev embeddings we have that \linebreak
$\norm{S(\cdot)\Stick_t}_{C_s([N,N+1]; \C^\beta)} \lesssim e^{-\frac N4}$ definitely. Therefore,
$$ \limsup_{s \to \infty} \norm{e^{\frac s8} S(s)\Stick_t}_{\C^\beta} = 0 \text{ a.s.},$$
which in particular implies \eqref{stickX}.
\end{proof}
\subsection{Invariant measure}

Consider the distribution-valued random variable 
\begin{equation} \label{u(omega)}
\u = \vec{\Re\Big(\sum_{n \in \Z^3} \frac{g_n}{\sqrt{1+|n|^4}} e^{inx}\Big)}{\Re\Big(\sum_{n \in \Z^3} h_n e^{inx}\Big)}
\end{equation}
where $g_n,h_n$ are independent complex-valued standard gaussians (i.e. the real and imaginary parts are independent real valued standard gaussians). If $\mathbf f = \vec{f}{f_t}$ is a test function, then 
\begin{equation} \label{eq:wiener}
\begin{aligned}
\E[\dual{\u}{\mathbf f}^2] &= \frac12 \sum_n \frac{\E|g_n|^2}{1+|n|^4} |\hat f(n)|^2 + \frac12 \sum_n \E|h_n|^2 |\hat f_t(n)|^2  \\
	&= \sum_n \frac{|\hat f(n)|^2}{1+|n|^4} + \sum_n |\hat f_t(n)|^2 \\
	&= \norm{(1+\Delta^2)^{-\frac12}f}_{L^2}^2 + \norm{f_t}_{L^2}^2.
\end{aligned}
\end{equation}
Therefore, if $\mu$ is the law of $\u$, we have that formally
\begin{equation}\label{mu}
\begin{aligned}
\d\mu(\u) &= \exp\Big(-\frac12 \norm{(1+\Delta^2)^\frac12u}_{L^2}^2\Big)\d u\, \times \exp\Big(-\frac12\norm{u_t}_{L^2}^2\Big)\d u_t\\
	&= \exp\Big(-\frac12 \int u^2 - \frac12 \int (\Delta u)^2\Big)\d u \, \times \exp\Big(-\frac12\int u_t^2\Big)\d u_t.
\end{aligned}
\end{equation}
\begin{Proposition}
For every $\alpha < \frac12$, $\u \in \C^\alpha$ a.s. \label{muholder}
\end{Proposition}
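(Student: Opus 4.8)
The plan is to show that $\u$ lands in a small enough product Sobolev space almost surely and then to conclude by a Sobolev embedding, in the spirit of the proof of Proposition~\ref{stickreg}; since there is no time variable here, no use of the Kolmogorov continuity theorem is needed. Write $\u=(u,u_t)$ for the two components appearing in \eqref{u(omega)}, so that $u=\Re\big(\sum_{n\in\Z^3}\frac{g_n}{\sqrt{1+|n|^4}}e^{in\cdot x}\big)$ and $u_t=\Re\big(\sum_{n\in\Z^3}h_n e^{in\cdot x}\big)$. By definition of $\C^\alpha = C^\alpha\times C^{\alpha-2}$ it suffices to prove that $u\in C^\alpha$ and $u_t\in C^{\alpha-2}$ almost surely. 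Fix $\alpha<\frac12$ and choose $\beta$ with $\alpha<\beta<\frac12$.

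Working with the smooth partial sums $P_{\le N}u$, one has that for each fixed $x\in\T^3$, $\jap\nabla^\beta(P_{\le N}u)(x)$ is a real Gaussian random variable (a finite linear combination of the $g_n$), and a direct computation using the independence and normalisation of the $g_n$ gives
\begin{equation*}
\E\big|\jap\nabla^\beta(P_{\le N}u)(x)\big|^2 \les \sum_{n\in\Z^3}\frac{\jap{n}^{2\beta}}{1+|n|^4} \les \sum_{n\in\Z^3}\jap{n}^{2\beta-4} <+\infty,
\end{equation*}
the last series converging precisely because $2\beta-4+3<0$; by translation invariance of the law of $u$ this bound is independent of $x$ and of $N$. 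Likewise $\jap\nabla^{\beta-2}(P_{\le N}u_t)(x)$ is a real Gaussian with $\E|\jap\nabla^{\beta-2}(P_{\le N}u_t)(x)|^2\les\sum_{n\in\Z^3}\jap{n}^{2\beta-4}<+\infty$. By hypercontractivity (equivalently, since all moments of a Gaussian are controlled by its variance), for every $q<+\infty$ and every $x$ we get $\E|\jap\nabla^\beta(P_{\le N}u)(x)|^q\les_q(\E|\jap\nabla^\beta(P_{\le N}u)(x)|^2)^{q/2}$, uniformly in $N$, and the same for $u_t$; integrating in $x$ and applying Fubini shows that $P_{\le N}u$ is Cauchy in $L^q(\Omega;W^{\beta,q})$ and $P_{\le N}u_t$ is Cauchy in $L^q(\Omega;W^{\beta-2,q})$, with limits $u$, $u_t$. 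In particular $\E\norm{u}_{W^{\beta,q}}^q<+\infty$ and $\E\norm{u_t}_{W^{\beta-2,q}}^q<+\infty$, so $u\in W^{\beta,q}$ and $u_t\in W^{\beta-2,q}$ almost surely.

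Finally, pick $q<+\infty$ large enough that $\beta-\frac3q>\alpha$. Since $(1-\Delta)^{-s/2}\colon L^q(\T^3)\to L^\infty(\T^3)$ whenever $s>\frac3q$, the embedding $W^{\sigma,q}(\T^3)\into C^{\sigma-3/q}$ holds for every $\sigma\in\R$ with the Bessel-potential convention $\norm{\cdot}_{C^\gamma}=\norm{\jap\nabla^\gamma\cdot}_{L^\infty}$ adopted in the paper; applying it with $\sigma=\beta$ and $\sigma=\beta-2$ yields $u\in C^\alpha$ and $u_t\in C^{\alpha-2}$ a.s., hence $\u\in\C^\alpha$ a.s. There is no genuine obstacle in this argument: its entire content is the elementary summability $\sum_{n\in\Z^3}\jap{n}^{2\beta-4}<+\infty\iff\beta<\frac12$, which is exactly what forces the restriction $\alpha<\frac12$; the passage from second moments to $q$-th moments is the same Gaussian hypercontractivity already used for $\stick_t$, and the only point requiring a word of care is that the Sobolev embedding into the H\"older--Besov scale is invoked at the negative exponent $\alpha-2$ for the $u_t$-component, which is harmless with the chosen definition of $C^\gamma$.
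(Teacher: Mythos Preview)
Your proof is correct and follows essentially the same route as the paper: bound the second moment of $\jap\nabla^\beta u(x)$ (and $\jap\nabla^{\beta-2}u_t(x)$) by the convergent series $\sum_n\jap n^{2\beta-4}$, upgrade to all $L^q$ moments by Gaussian hypercontractivity, and conclude via the Sobolev embedding $W^{\beta,q}\hookrightarrow C^\alpha$ for $q$ large. Your version is in fact slightly cleaner than the paper's, since you make explicit the gap $\alpha<\beta<\tfrac12$ needed for the embedding (the paper computes directly at exponent $\alpha$ and leaves this step implicit), and you justify the passage from partial sums to the full series; but the strategy is identical.
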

\begin{proof}
By Sobolev embeddings, it is enough to show that $u \in \W^{\alpha,p}$ a.s. for every 
$p > 0$. We have that 
\begin{equation*}
\E|\jap{\nabla}^\alpha u(x)|^2 = \frac12 \E \Big|\sum_{n \in \Z^3} \jap{n}^\alpha \frac{g_n}{\sqrt{1+|n|^4}} e^{inx}\Big|^2 
		=\frac12 \sum_{n\in \Z^3} \frac {\jap{n}^{2\alpha}}{1+|n|^4} \lesssim_\alpha 1,
\end{equation*}
and similarly 
\begin{equation*}
\E|\jap{\nabla}^{-2+\alpha} u_t(x)|^2 = \frac12 \E \Big|\sum_{n\in\Z^3} \jap{n}^{-2+\alpha} h_n e^{inx}\Big|^2 = 
\frac12\sum_{n\in\Z^3} \jap{n}^{-4+2\alpha} \lesssim_\alpha 1.
\end{equation*}
Therefore, by hypercontractivity, for $q>p$,
\begin{equation*}
\begin{aligned}
\E\norm{u}_{\W^{\alpha,p}}^q &= \E\Big(\int |\jap{\nabla}^\alpha u(x)|^p \d x + \int|\jap{\nabla}^{-2+\alpha} u_t(x)|^p \d x \Big)^\frac qp \\
	&\le \E\Big[\int |\jap{\nabla}^\alpha u(x)|^q \d x + \int|\jap{\nabla}^{-2+\alpha} u_t(x)|^q \d x \Big] \\
	& \lesssim \int (\E|\jap{\nabla}^\alpha u(x)|^2)^\frac q2 \d x + \int (\E|\jap{\nabla}^{-2+\alpha} u_t(x)|^2)^\frac q2 \d x \\
	& \lesssim_\alpha 1,
\end{aligned}
\end{equation*}
and in particular $ u \in \W^{\alpha,p}$ a.s.
\end{proof}
\begin{Proposition}
For every $\alpha < \frac12$, $S(t)\u \in C_t\C^\alpha$ a.s. Moreover,
\begin{equation*}
\sup_{t>0} \norm{e^{\frac t8}S(t) \u}_{\C^\alpha} <+\infty
\end{equation*}
a.s. In particular, $\u\in X^\alpha$ a.s.
\end{Proposition}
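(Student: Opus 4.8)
The plan is to rerun the proof of Proposition~\ref{stickX} almost verbatim, with the stochastic convolution $\Stick_t$ replaced by the Gaussian field $\u$ of~\eqref{u(omega)}. First I would observe that it suffices to prove, for every $\alpha<\frac12$, that $S(\cdot)\u\in C([0,+\infty);\C^\alpha)$ a.s.\ and $\sup_{s>0}\norm{e^{\frac s8}S(s)\u}_{\C^\alpha}<+\infty$ a.s.: these two facts say exactly that $\u\in\bar X^\alpha$ a.s., and since they hold for all $\alpha<\frac12$ we also get $\norm{\u}_{X^{\alpha'}}<+\infty$ a.s.\ for $\alpha'\in(\alpha,\frac12)$, hence $\u\in X^\alpha$ a.s.\ by the approximation bound $\norm{\u-P_{\le N}\u}_{X^\alpha}\les N^{-\frac{\alpha'-\alpha}2}\norm{\u}_{X^{\alpha'}}$.

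The first step is the covariance computation. For a test function $\f=\vec f{f_t}$ the real random variable $\dual{S(r)\u}{\f}=\dual{\u}{S(r)^\ast\f}$ is centred Gaussian, so polarising~\eqref{eq:wiener} yields a bilinear form with $\E\bigl[\dual{S(r)\u}{\f}\dual{S(s)\u}{\f}\bigr]=e^{-\frac{r+s}2}\widehat\gamma(r,s)[\f]$, the prefactor being extracted from the two copies of the exponential damping in $S(t)$, and $\widehat\gamma(r,s)[\f]$ being the pairing of the Fourier symbols of $S(r)^\ast\f$ and $S(s)^\ast\f$ against $(1+\Delta^2)^{-1}$ in the first component and against the identity in the second, in the style of~\eqref{gamman}. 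From the explicit formula for $S(t)$ together with the identity $\partial_r S(r)=-\begin{pmatrix}0&-1\\1+\Delta^2&1\end{pmatrix}S(r)$, the same computation as for $\gamma$ in Proposition~\ref{stickX} gives $\widehat\gamma(r,s)[\f]\les\norm f_{H^{-2}}^2+\norm{f_t}_{L^2}^2$ and $\Lip(\widehat\gamma(\cdot,\cdot)[\f])\les\norm f_{L^2}^2+\norm{f_t}_{H^2}^2$.

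From this point the argument is identical to Proposition~\ref{stickX}. Interpolating between the two bounds on $\widehat\gamma$, applying the result to $\f=\vec{e^{in\cdot x}}0$ and $\f=\vec0{\jap n^{-2}e^{in\cdot x}}$ for $n\in\Z^3$ and summing against the weight $\jap n^{2\alpha}$ — the powers of $\jap n$ collapsing to the summable $\jap n^{-3-2\alpha-\epsilon}$ as in~\eqref{stickhalpha}, for a suitable small $\epsilon>0$ — produces $\E\norm{S(s+h)\u-S(s)\u}_{\H^\alpha}^2\les e^{-s}|h|^{\frac{1-2\alpha-\epsilon}4}$. Then translation invariance of $S(t)$ and hypercontractivity, exactly as in~\eqref{stickwalpha}, give $\E\norm{S(s+h)\u-S(s)\u}_{\W^{\alpha,q}}^p\les_p e^{-\frac p2 s}|h|^{\frac{p(1-2\alpha-\epsilon)}4}$ for $p>q$; the Kolmogorov continuity theorem then yields $S(\cdot)\u\in C_s\W^{\alpha,q}$ a.s.\ with $\E\norm{S(\cdot)\u}_{C_s([N,N+1];\W^{\alpha,q})}^p\les_p e^{-\frac p2 N}$, and Chebyshev together with Borel-Cantelli give $\norm{S(\cdot)\u}_{C_s([N,N+1];\W^{\alpha,q})}\le e^{-\frac N4}$ for all large $N$, a.s. Running the whole argument with some $\alpha'\in(\alpha,\frac12)$ in place of $\alpha$ and then choosing $q$ large so that $\W^{\alpha',q}\into\C^\alpha$ by Sobolev embedding, I conclude $S(\cdot)\u\in C([0,+\infty);\C^\alpha)$ a.s.\ and $\limsup_{s\to\infty}\norm{e^{\frac s8}S(s)\u}_{\C^\alpha}=0$ a.s., hence $\sup_{s>0}\norm{e^{\frac s8}S(s)\u}_{\C^\alpha}<+\infty$ a.s., as required.

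I do not expect a serious obstacle: this is essentially a rerun of Proposition~\ref{stickX}. The only point that needs care is the covariance step — specifically, checking that the entry of the symbol of $S(r)$ that grows like $|n|^2$ causes no loss. It does not, because that entry acts only on the second slot $f_t$ of the test function, which sits two derivatives lower; consequently $\widehat\gamma$ obeys the same fixed-regularity bounds as $\gamma$ in Proposition~\ref{stickX}, and the summation over $n$ closes at the same threshold $\alpha<\frac12$.
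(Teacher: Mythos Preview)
Your proposal is correct and follows essentially the same approach as the paper: compute the covariance of $\dual{S(r)\u}{\f}$ via~\eqref{eq:wiener} to obtain $e^{-\frac{r+s}2}\bar\gamma(r,s)[\f]$ with the same bounds $\bar\gamma\les\norm{f}_{H^{-2}}^2+\norm{f_t}_{L^2}^2$ and $\Lip(\bar\gamma)\les\norm{f}_{L^2}^2+\norm{f_t}_{H^2}^2$, then rerun Proposition~\ref{stickX} verbatim. The paper's proof is just terser, introducing the operator $\mathcal L\vec{f}{f_t}=\vec{(1+\Delta^2)^{-1}f}{f_t}$ to express the covariance and then immediately deferring to Proposition~\ref{stickX}; your extra care about the growing entry of the symbol and the passage from $\bar X^\alpha$ to $X^\alpha$ are correct elaborations.
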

\begin{proof}
For a test function $\f$, we have that
$$\resizebox{\hsize}{!}
{$\displaystyle\E\dual{S(t)\u}{\f}\dual{S(s)\u}{\f} = \dual{\mathcal L S(t)^\ast \f}{S(s)^\ast \f} = \dual{S(s)\mathcal L\f}{S(t)\f} = e^{-\frac{t+s}2} \bar\gamma(t,s),$}
$$
where 
$$\mathcal L \vec{f}{f_t} = \vec{(1 + \Delta^2)^{-1} f}{f_t}.$$
Therefore, we have $\bar \gamma(t,s)[\f] \lesssim \norm{f}_{H^{-2}}^2 + \norm{f_t}_{L^2}^2$ and $\Lip(\bar \gamma(\cdot,\cdot)[\f])\lesssim \norm{f}_{L^2}^2 + \norm{f_t}_{H^2}^2$, 
so we can conclude the proof exactly in the same way as in Proposition \ref{stickX}.
\end{proof}

However, we are not interested in $\mu$, but in the \emph{Gibbs measure} $\rho$, which formally is given by 
\begin{align*}
\d\rho(\u) &= Z^{-1}\exp\Big(-\frac14\int u^4-\frac12 \int u^2 - \frac12 \int (\Delta u)^2\Big)\exp\Big(-\frac12\int u_t^2\Big)\d u \d u_t \\
&= Z^{-1}\exp\Big(-\frac14\int u^4\Big)\d\mu(\u),
\end{align*}
where $Z$ is the normalisation factor.
\begin{Proposition} \label{convergence}
The function $F(\u) := \exp\big(-\frac14\int u^4\big)$ belongs to $L^\infty(\mu)$ and $\norm{F}_{L^\infty(\mu)} \le 1$.
Moreover, if $F_N(\u) := \exp\big(-\frac14\int (P_{\le N}u)^4\big)$, then $\norm{F_N}_{L^\infty(\mu)}\le1$ and
$F_N\to F$ in $L^p(\mu)$ for every $1\le p <+\infty$.

 In particular, the probability measures 
$\rho_N := Z_N^{-1}F_N\mu$, $\rho:= Z^{-1}F\mu$ are well defined, absolutely continuous with respect to $\mu$, and, for every set $E$, $\rho_N(E) \to \rho(E)$.
\end{Proposition}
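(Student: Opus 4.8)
The plan is to reduce the whole statement to two elementary ingredients: that $\int u^4$ is $\mu$-a.s.\ finite, and that the sharp Fourier projections $P_{\le N}$ converge strongly to the identity on $L^4(\T^3)$.

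First I would record that, by Proposition~\ref{muholder}, $\mu$-a.s.\ one has $u \in C^\alpha(\T^3)$ for some $0<\alpha<\tfrac12$, and since $\T^3$ is compact this embeds into $L^4(\T^3)$; hence $\int u^4 \in [0,+\infty)$ and $F(\u)=\exp(-\tfrac14\int u^4)\in(0,1]$ for $\mu$-a.e.\ $\u$. Likewise $P_{\le N}u$ is $\mu$-a.s.\ a trigonometric polynomial, so $\int(P_{\le N}u)^4\in[0,+\infty)$ and $F_N(\u)\in(0,1]$ $\mu$-a.s. This already gives $\norm{F}_{L^\infty(\mu)}\le1$ and $\norm{F_N}_{L^\infty(\mu)}\le1$; moreover, being integrals of $\mu$-a.s.\ strictly positive functions bounded by $1$, the constants $Z:=\int F\,\d\mu$ and $Z_N:=\int F_N\,\d\mu$ lie in $(0,1]$, so $\rho=Z^{-1}F\mu$ and $\rho_N=Z_N^{-1}F_N\mu$ are well-defined probability measures, and they are absolutely continuous with respect to $\mu$ directly from their definition.

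For the $L^p$-convergence I would use $|e^{-a}-e^{-b}|\le|a-b|$ for $a,b\ge0$ to bound
\begin{equation*}
|F_N(\u)-F(\u)|\le\tfrac14\big|\,\norm{P_{\le N}u}_{L^4}^4-\norm{u}_{L^4}^4\,\big|.
\end{equation*}
Since trigonometric polynomials are dense in $L^4(\T^3)$ and $P_{\le N}$ is bounded on $L^4$ uniformly in $N$ (it is a composition, up to modulations, of Hilbert transforms in each variable, as noted in Section~1.3), the operators $P_{\le N}$ converge strongly to the identity on $L^4(\T^3)$; applying this to $u\in L^4$ (valid $\mu$-a.s.) gives $\norm{P_{\le N}u}_{L^4}\to\norm{u}_{L^4}$, hence $F_N\to F$ $\mu$-a.s. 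As $|F_N-F|^p\le1$ and $\mu$ is a probability measure, dominated convergence upgrades this to $F_N\to F$ in $L^p(\mu)$ for every $1\le p<+\infty$.

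It then remains to pass the convergence to the measures. From the $L^1(\mu)$ convergence, $Z_N\to Z>0$, so $Z_N^{-1}\to Z^{-1}$; and for any Borel set $E$ one has $\big|\int_E F_N\,\d\mu-\int_E F\,\d\mu\big|\le\norm{F_N-F}_{L^1(\mu)}\to0$ uniformly in $E$, whence $\rho_N(E)=Z_N^{-1}\int_E F_N\,\d\mu\to Z^{-1}\int_E F\,\d\mu=\rho(E)$. The only step that is not pure bookkeeping is the strong $L^4$-convergence of the sharp Fourier projections, and that is precisely where the uniform $L^p$-boundedness of $P_{\le N}$ recorded in Section~1.3 is used; everything else is routine.
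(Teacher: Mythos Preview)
Your proof is correct and follows essentially the same route as the paper's: both rely on Proposition~\ref{muholder} to get $u\in L^4$ $\mu$-a.s., the trivial bound $F,F_N\le1$, the strong $L^4$-convergence $P_{\le N}u\to u$, and dominated convergence. Your version is simply more explicit---you spell out the uniform $L^4$-boundedness argument for the sharp projections and the passage from $L^1(\mu)$-convergence of densities to $\rho_N(E)\to\rho(E)$, which the paper leaves implicit.
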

\begin{proof}
By Proposition \ref{muholder}, $\int u^4 < +\infty$ $\mu$-a.s., so $F, F_N$ are well defined $\mu$-a.s. 
Moreover, since $\int f^4 \ge 0$ for every $f$, we have that $F,F_N \le 1$. Again by Proposition \ref{muholder}, 
 $P_{\le N} u \to u$ in $L^4$, so up to subsequences, $F_N \to F$ $\mu$-a.s., therefore $\int|F_N-F|^p\d\mu \to 0$ by dominated convergence. 
\end{proof}
\section{Local and global well posedness}
In this section, we will show local and global well posedness in $X^\alpha$ for the equations \eqref{bvec}, \eqref{bvecN}, relying onto the probabilistic estimates of the previous section and the \emph{Da Prato-Debussche} trick.

Local well posedness will follow by a standard Banach fixed point argument. For global well posedness, following \cite{bt14} we estimate an appropriate energy for the remainder $\v$, and we combine this argument with an integration by parts trick from \cite{op16}.
\subsection{Local well posedness}
\begin{Proposition} \label{lwp}
The equations 
\begin{align}
\v(t) &=  S(t) \v_0 - \int_0^t S(t-t') \vec{0}{(S(t')\u_0 + \stick_{t'}(\xi) + v)^3}\d t', \label{iterable}\\
\v_N(t) &=  S(t) \v_0 - \int_0^t S(t-t') P_{\le N} \vec{0}{P_{\le N}(S(t')\u_0 + \stick_{t'}(\xi) + v_N)^3}\d t' \label{iterableN},
\end{align}
where $\v_0 \in \H^2$ and $u_0\in X^\alpha$, are locally well-posed. 

More precisely, there exists $T(\norm{\v_0}_{\H^2},\norm{\u_0}_{X^\alpha},\norm{\Stick}_{C([0,1];\C^0)})>0$ such that there exists a unique solution $\v(t;\v_0,\u_0,\xi) \in C_t\H^2$, defined 
on a maximal interval (respectively) $[0,T^*(\v_0,\u_0,\xi))$, $[0,T_N^*(\v_0,\u_0,\xi))$, with $T^*,T_N^* > T$.

Moreover, if $T^*<+\infty$, then the following blowup condition holds
\begin{equation}\label{blowup}
\lim_{t\to T^*} \norm{\v(t)}_{\H^2} = +\infty.
\end{equation}
Respectively, if $T_N^* < +\infty$, we have 
$$\lim_{t\to T_N^*} \norm{\v_N(t)}_{\H^2} = +\infty.
 $$
\end{Proposition}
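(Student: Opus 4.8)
The plan is to run a contraction mapping argument in the space $C([0,T];\H^2)$ for the map defined by the right-hand side of \eqref{iterable} (and analogously \eqref{iterableN}), treating $S(t)\u_0$ and $\stick_{t'}(\xi)$ as given data living in $\C^\alpha$. The key structural input is the smoothing property of the Duhamel operator: because $S(t-t')$ applied to $\vec{0}{h}$ gains two derivatives in the first component (the $\sin$-factor divided by $\sqrt{3/4+|n|^4}$ behaves like $\jap n^{-2}$) and loses none in the second, one has a bound of the shape $\norm{\int_0^t S(t-t')\vec{0}{h(t')}\d t'}_{\H^2} \lesssim \int_0^t \norm{h(t')}_{L^2}\d t'$, uniformly on bounded time intervals (the exponential decay $e^{-(t-t')/2}$ only helps). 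So I would first record this fixed linear estimate, then estimate the nonlinearity: I need $\norm{(w + v)^3}_{L^2}$ where $w = $ (first component of) $S(t')\u_0 + \stick_{t'}$ lies in $C^\alpha \subset L^\infty$ and $v \in \H^2 \hookrightarrow L^\infty$ (using $\H^2 = H^2\times L^2$ in $d=3$, $H^2 \into L^\infty$). Expanding the cube, every term is a product of three factors each in $L^\infty$, hence in $L^\infty \subset L^2$ on the torus, with norm controlled by $(\norm{w}_{L^\infty} + \norm{v}_{\H^2})^3$. For the truncated equation the extra projections $P_{\le N}$ are harmless since $P_{\le N}$ is bounded on $L^2$ (indeed $N=-1$ gives the trivial equation $\v_N = S(t)\v_0$).

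Next I would set up the ball $B_R = \{v \in C([0,T];\H^2) : \sup_{t\le T}\norm{v(t)}_{\H^2} \le R\}$ with $R$ chosen depending on $\norm{\v_0}_{\H^2}$ and $M := \norm{S(\cdot)\u_0}_{C([0,1];\C^0)} + \norm{\stick}_{C([0,1];\C^0)} \lesssim \norm{\u_0}_{X^\alpha} + \norm{\stick}_{C([0,1];\C^0)}$ (here I use that $\norm{S(t)\u_0}_{\C^0} \le \norm{S(t)\u_0}_{\C^\alpha} \le e^{-t/8}\norm{\u_0}_{X^\alpha}$), and then verify that for $T = T(R,M)$ small enough the map sends $B_R$ into itself and is a contraction there. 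The self-mapping bound reads $\norm{\Gamma v}_{C_T\H^2} \le C e^{-t/2}\norm{\v_0}_{\H^2} + CT(M+R)^3 \le R$, and the contraction estimate follows from the factorisation $a^3 - b^3 = (a-b)(a^2+ab+b^2)$ applied pointwise, giving $\norm{\Gamma v_1 - \Gamma v_2}_{C_T\H^2} \le CT(M+R)^2 \norm{v_1 - v_2}_{C_T\H^2} \le \tfrac12 \norm{v_1-v_2}_{C_T\H^2}$. Banach's fixed point theorem then yields a unique solution on $[0,T]$, and the dependence of $T$ on the three indicated quantities is exactly as stated since $M$ is controlled by $\norm{\u_0}_{X^\alpha}$ and $\norm{\stick}_{C([0,1];\C^0)}$.

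For the maximal interval and the blowup alternative, I would argue in the standard way: define $T^*$ as the supremum of times up to which a solution in $C_t\H^2$ exists (well-defined and $>T$ by the above), uniqueness being inherited from the local statement by a connectedness/continuation argument on $[0,T^*)$. If $T^* < \infty$ but $\limsup_{t\to T^*}\norm{\v(t)}_{\H^2} = C_0 < \infty$, then for $t_0$ close to $T^*$ I restart the equation from time $t_0$ with data $\v(t_0)$; the local existence time depends only on $\norm{\v(t_0)}_{\H^2} \le C_0 + 1$ and on $\norm{S(\cdot)\u(t_0)}_{C([0,1];\C^0)}$, $\norm{\stick_{t_0+\cdot}}_{C([0,1];\C^0)}$ — but here a small subtlety appears that I should address: the ``data'' terms $S(t')\u_0 + \stick_{t'}$ in \eqref{iterable} are evaluated at absolute time, so the restarted equation is the one for $\v$ with the full Duhamel term from $t_0$, i.e.\ $\u_0$ is unchanged and $\stick$ is shifted; since $\stick \in C([0,\infty);\C^\alpha)$ a.s.\ (Proposition \ref{stickreg}) its norm on $[t_0,t_0+1]$ is finite, so the restart time is bounded below uniformly for $t_0 \in [T^*-1,T^*)$, contradicting maximality. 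The truncated case is identical, with $P_{\le N}$ carried through.

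The main obstacle is really just bookkeeping rather than a genuine difficulty: one must make sure the Duhamel operator genuinely maps into $C([0,T];\H^2)$ (continuity in time, not merely boundedness), which follows from continuity of $t\mapsto S(t)$ on $\H^\alpha$ combined with dominated convergence in the time integral, and one must confirm that the nonlinearity estimate closes at the regularity $\H^2$ in dimension $d=3$ — this is exactly where $s=4$ enters, since $H^{s/2} = H^2 \into L^\infty(\T^3)$ is the borderline Sobolev embedding that makes $(w+v)^3 \in L^2$ with the needed algebra-type bound; in the general case $s > d$ one uses $H^{s/2} \into L^\infty(\T^d)$ in the same way. No renormalisation is needed precisely because $\stick_t \in \C^\alpha$ pointwise in space, so powers of $w$ are classically defined.
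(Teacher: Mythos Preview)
Your proposal is correct and follows essentially the same approach as the paper: a contraction mapping in $C([0,T];\H^2)$ using the Sobolev embedding $H^2\hookrightarrow L^\infty(\T^3)$ to control the cubic nonlinearity in $L^2$, followed by the standard continuation/blowup alternative via restarting the equation at a time close to $T^*$. The paper carries out exactly this scheme, writing the iterated Duhamel formula explicitly to verify the restart; your discussion of the shifted data $\stick_{t_0+\cdot}$ and the time-continuity of the Duhamel map covers the same points.
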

\begin{proof}Consider the map $\Gamma = \Gamma_{\v_0,\u_0,\stick}$ given respectively by
\begin{align*}
\Gamma(\v) &= S(t)\v_0 -  \int_0^t S(t-t')\vec{0}{(S(t')\u_0 + \stick_{t'}(\xi) + v)^3}\d t',\\
\Gamma(\v_N) &= S(t)\v_0 -  \int_0^t S(t-t')P_{\le N}\vec{0}{P_{\le N}(S(t')\u_0 + \stick_{t'}(\xi) + v_N)^3}\d t'.
\end{align*}
We want to show that 
for some universal $C>0$, $R=2C(1+\norm{\v_0}_{\H^2})$, $T=T(\norm{\v_0}_{\H^2},\norm{\u_0}_{X^\alpha},\norm{\stick}_{C([0,1];\C^0)})>0$, this map is a contraction on \linebreak $B_R\subseteq C([0,T];\H^2)$.
By the uniform boundedness of $S(t)$ and $P_{\le N}$ as operators $\H^2\to\H^2$, we have that, if $\v\in B_R$, 
\begin{equation} \label{contraction}
\begin{aligned}
\norm{\Gamma(\v(t))}_{\H^2} &\lesssim \norm{\v_0}_{\H^2} + T\sup_{0\le t\le T} \norm{\vec{0}{(S(t)\u_0 + \<1>(\xi) + v)^3}}_{\H^2} \\
&= \norm{\v_0}_{\H^2} + T\sup_{0\le t\le T} \norm{(S(t)\u_0 + \<1>(\xi) + v)^3}_{L^2}\\
&\lesssim \norm{\v_0}_{\H^2} + T(\norm{\u_0}_{X^\alpha}^3 + \norm{\stick(\xi)}_{C([0,T];\C^0)}^3 + \sup_{0\le t\le T}\norm{\v}_{\H^2}^3). \\
& \lesssim \norm{\v_0}_{\H^2} + T(\norm{\u_0}_{X^\alpha}^3 + \norm{\stick(\xi)}_{C([0,T];\C^0)}^3) + TR^3,
\end{aligned}
\end{equation}
where we just used the Sobolev embedding $\norm{v}_{L^\infty} \lesssim \norm{v}_{H^2}$.

Therefore, if $C$ is the implicit constant in this inequality, for $T$ small enough 
($T < \frac{R}{2C(\norm{\u_0}_{X^\alpha}^3 + \norm{\stick(\xi)}_{C([0,1];\C^0)}^3 +R^3)} \wedge 1$), 
$\Gamma$ maps $B_R$ into itself. 
Proceeding similarly,
\begin{align*}
&\norm{\Gamma(\v(t))-\Gamma(\w(t))}_{\H^2} \\
& \lesssim T(\norm{\u_0}_{X^\alpha}^2+\norm{\stick(\xi)}_{C([0,T];\C^0)}^2 + R^2)\norm{\v-\w}_{\H^2}.
\end{align*}
Therefore, for $T$ small enough ($T \le \frac1{2C'(\norm{\u_0}_{X^\alpha}^2+\norm{\stick(\xi)}_{C([0,1];\C^0)}^2 + R^2)} \wedge 1$, where $C'$ is the implicit constant in the inequality), 
$$\norm{\Gamma(\v(t))-\Gamma(\w(t))}_{\H^2} \le \frac12\norm{\v-\w}_{\H^2},$$
so $\Gamma$ is a contraction.
This implies that the equations \eqref{iterable}, \eqref{iterableN} have a unique solution in $B_R$ up to time $T(\norm{\v_0}_{\H^2},\norm{\u_0}_{X^\alpha},\norm{\stick}_{C([0,1];\C^0)})$.

We notice that if $\v$ solves \eqref{iterable}, then 
\begin{equation} \label{iterated}
\begin{aligned}
&\v(t+s) \\
&= S(t+s)\v_0 + \int_0^{t+s} S(t+s-t')\vec{0}{(S(t')\u_0 + \<1>_{t'}(\xi) + v)^3}\d t'\\
	&\begin{multlined} 
		=S(t)\left(S(s)\v_0 + \int_0^s S(s-t')\vec{0}{(S(t')\u_0 + \<1>_{t'}(\xi) + v)^3}\d t'\right) \\
		+ \int_s^{t+s} S(t+s-t')\vec{0}{(S(t')\u_0 + \<1>_{t'}(\xi) + v)^3}\d t'
		\end{multlined}\\
	&=S(t)\v(s) + \int_0^t S(t-t')\vec{0}{(S(t')(S(s)\u_0)+\stick_{s+t'}(\xi) + v(s+t'))^3} \d t',
\end{aligned}
\end{equation}
and similarly, if $\v_N$ solves \eqref{iterableN}, then
\begin{equation}\label{iteratedN}
\resizebox{1.0\hsize}{!}{$\displaystyle
\begin{aligned}
&\v_N(t+s) \\
= &S(t)\v(s) + \int_0^t S(t-t')P_{\le N}\vec{0}{P_{\le N}(S(t')(S(s)\u_0)+\stick_{s+t'}(\xi) + v_N(s+t'))^3} \d t'.
\end{aligned}$}
\end{equation}
Since $\v(s),\v_N(s) \in \H^2$, $S(s)\u_0 \in X^\alpha$ and $\stick_{s+\cdot}(\xi) \in C([0,1];\C^0)$, we can repeat the same
contraction argument on $\Gamma_{\v(s),S(s)\u_0,\stick_{s+\cdot}}$, and we obtain that  \eqref{iterated} 
and \eqref{iteratedN} have a unique solution on the interval 
$$[0,T(\norm{\v_0}_{\H^2},\norm{\u_0}_{X^\alpha},\norm{\stick}_{C([s,s+1];\C^0)})].$$ 
To show uniqueness up to time $T^*$ or $T_N^*$, suppose we have two different solutions $\v_1,\v_2$.
Let $s:=\inf\{t|\v_1(t)\neq\v_2(t)\}$. Then we have $\v_1(s)=\v_2(s)$, and both $\v_1(s+t)$ and $\v_2(s+t)$
solve either \eqref{iterated} or \eqref{iteratedN}, so they have to be equal up to time $T(\norm{\v_0}_{\H^2},\norm{\u_0}_{X^\alpha},\norm{\stick}_{C([s,s+1];\C^0)})$, which is in contradiction with the definition of $s$.

To show the blowup condition \eqref{blowup}, suppose by contradiction that $v$ solves $\eqref{iterable}$ and
$\norm{\v(t)}_{\H^2} \le C$ for every $t<T^*$. Taking $$T=T(C,\norm{\u_0}_{X^\alpha},\norm{\stick}_{C([0,T^*+1];\C^0)}),$$
let $s:= T^*-\frac T2$. We clearly have $T^* \ge T$, so $s >0$. Then $\v(s+\cdot)$ solves \eqref{iterated}, and we can extend the solution up to time
$$T(\norm{\v(s)}_{\H^2},\norm{S(s)\u_0}_{X^\alpha},\norm{\stick}_{C([s,s+1];\C^0)}) \ge T.$$ 
Therefore,
we can extend $\v$ as solution of \eqref{iterable} up to time $s+T = T^* + \frac T2$, contradiction.

The same argument holds for solutions of \eqref{iterableN}.
\end{proof}

\begin{Proposition}[Continuity of the flow] \label{continuity}
Let $\v(\u_0)$ solve \eqref{iterable} in an interval $[0, T^*)$. Let $ T <T^*$. Then there exists a neighbourhood $U$ of $\u_0$ such that $\v(\tilde\u_0)$ is a solution of \eqref{iterable} in the interval $[0,T]$ for every $\tilde\u_0 \in U \subseteq X^\alpha$ and $\lim_{\norm{\tilde \u_0 - \u_0}_{X^\alpha}\to 0} \v(\tilde\u_0) = \v(\u_0)$ in $C([0,T];\H^2)$. The same holds for solutions of \eqref{iterableN}.
\end{Proposition}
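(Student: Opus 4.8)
The plan is to run a continuous-dependence / bootstrap argument on top of the fixed point estimates already established in Proposition~\ref{lwp}. Since $\v(\u_0)\in C([0,T];\H^2)$, set $M:=1+\sup_{t\in[0,T]}\norm{\v(\u_0)(t)}_{\H^2}<+\infty$ and $K:=1+\norm{\u_0}_{X^\alpha}+\norm{\stick(\xi)}_{C([0,T+1];\C^0)}$, the latter being finite for a fixed realisation of $\stick$ by Proposition~\ref{stickreg}. Let $\tau:=T(2M,K,K)$ be the local existence time of Proposition~\ref{lwp}, which may be taken non-increasing in each of its three arguments; thus the contraction argument there runs on \emph{any} time interval of length $\tau$ starting at some $s\le T$, with data $\v_1$, background $\w_0$ and noise $\stick_{s+\cdot}$, as long as $\norm{\v_1}_{\H^2}\le 2M$, $\norm{\w_0}_{X^\alpha}\le K$ and $\norm{\stick_{s+\cdot}}_{C([0,1];\C^0)}\le K$ (note $\norm{S(s)\w}_{X^\alpha}\le\norm{\w}_{X^\alpha}$ and $\norm{\stick_{s+\cdot}}_{C([0,1];\C^0)}\le\norm{\stick}_{C([0,T+1];\C^0)}\le K$). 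Partition $[0,T]$ by $t_k:=\min(k\tau,T)$, $k=0,\dots,n:=\lceil T/\tau\rceil$.

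The core estimate compares $\v(\u_0)$ and $\v(\tilde\u_0)$ on one block $[t_k,t_{k+1}]$, assuming both are defined there. By the semigroup identity \eqref{iterated}, the time-shifted functions $\v(\u_0)$ and $\v(\tilde\u_0)$ are, on this block, the fixed points of $\Gamma_{\v(\u_0)(t_k),\,S(t_k)\u_0,\,\stick_{t_k+\cdot}}$ and $\Gamma_{\v(\tilde\u_0)(t_k),\,S(t_k)\tilde\u_0,\,\stick_{t_k+\cdot}}$ respectively — the \emph{same} noise, so the stochastic convolution cancels in the difference. Writing $\v(\tilde\u_0)-\v(\u_0)=[\Gamma_{\tilde}(\v(\tilde\u_0))-\Gamma_{\tilde}(\v(\u_0))]+[\Gamma_{\tilde}(\v(\u_0))-\Gamma(\v(\u_0))]$, the first bracket is absorbed by the contraction (factor $\tfrac12$), while the second is controlled via the identity $a^3-b^3=(a-b)(a^2+ab+b^2)$, the Sobolev embedding $H^2\into L^\infty$, boundedness of $S(t)$ on $\H^2$, and $\norm{S(\cdot)S(t_k)\w}_{C([0,\tau];\C^0)}\les\norm{S(t_k)\w}_{X^\alpha}\le\norm{\w}_{X^\alpha}$ (since $\norm{S(t_k)\w}_{X^\alpha}=\sup_{r\ge t_k}e^{(r-t_k)/8}\norm{S(r)\w}_{\C^\alpha}\le e^{-t_k/8}\norm{\w}_{X^\alpha}$), exactly as in the computation \eqref{contraction}. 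With $\epsilon_k:=\sup_{[0,t_k]}\norm{\v(\tilde\u_0)-\v(\u_0)}_{\H^2}$ and $\delta:=\norm{\tilde\u_0-\u_0}_{X^\alpha}$ this yields
\begin{equation*}
\epsilon_{k+1}\le \epsilon_k+C(M,K)\big(\epsilon_k+\delta\big).
\end{equation*}

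The induction then reads: if $\delta\le 1$ and $\v(\tilde\u_0)$ is defined on $[0,t_k]$ with $\epsilon_k\le M$ (hence $\sup_{[0,t_k]}\norm{\v(\tilde\u_0)}_{\H^2}\le 2M$), then since also $\norm{S(t_k)\tilde\u_0}_{X^\alpha}\le\norm{\u_0}_{X^\alpha}+\delta\le K$, Proposition~\ref{lwp} with the uniform existence time $\tau$ together with the blowup criterion \eqref{blowup} extends $\v(\tilde\u_0)$ past $t_{k+1}$, and the displayed recursion propagates the bound. Iterating $n$ times gives $\epsilon_n\le(1+C(M,K))^n\delta$, so taking $\delta_0:=\min\big(1,\,M(1+C(M,K))^{-n}\big)$ and $U:=\{\tilde\u_0:\norm{\tilde\u_0-\u_0}_{X^\alpha}<\delta_0\}$ keeps $\epsilon_k\le M$ at every stage, closing the induction — in particular $T^*(\tilde\u_0)>T$ — and delivers $\norm{\v(\tilde\u_0)-\v(\u_0)}_{C([0,T];\H^2)}\le(1+C(M,K))^n\delta\to 0$ as $\delta\to 0$. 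The statement for \eqref{iterableN} is obtained verbatim, using boundedness of $P_{\le N}$ on $\H^2$ and on $L^2$ precisely as in Proposition~\ref{lwp}.

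I expect the only real obstacle to be this bootstrap bookkeeping: a priori $\v(\tilde\u_0)$ could blow up somewhere inside $[0,T]$, so the induction must be arranged so that the a priori bound $\sup\norm{\v(\tilde\u_0)}_{\H^2}\le 2M$, inherited from $\v(\u_0)$ through the closeness estimate, is available \emph{before} invoking local existence on the next block — this is exactly what keeps the step length $\tau$ from degenerating to $0$. Everything else is routine chaining of the contraction estimates from the proof of Proposition~\ref{lwp}.
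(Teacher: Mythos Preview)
Your proof is correct and follows essentially the same approach as the paper: both propagate the contraction estimates of Proposition~\ref{lwp} block by block, bootstrapping an a~priori bound on $\v(\tilde\u_0)$ to keep the step length uniform. The only difference is organisational---you use an explicit finite induction over $\lceil T/\tau\rceil$ blocks with a quantitative recursion $\epsilon_{k+1}\le(1+C)\epsilon_k+C\delta$, whereas the paper packages the same computation as a maximal-time contradiction argument (defining $\tau^*$ and extending past it).
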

\begin{proof}
We prove the result just for solutions of \eqref{iterable}, the case of \eqref{iterableN} is completely analogous.
We want to prove that 
$$\tau^*:=\sup\left\{T\middle| 
\begin{gathered}
\exists U\ni \u_0 \text{ open such that} \\ 
 \norm{\v(\tilde \u_0)}_{C([0,T];\H^2)} \le \norm{\v(\u_0)}_{C([0,T];\H^2)} +1\,  \forall \tilde\v_0 \in U,\\
 \lim_{\tilde \u_0 \to \u_0} \v(\tilde\u_0) = \v(\u_0)
 \end{gathered}\right\} \ge T^*.$$
By definition, we have that $\tau^*\ge 0$ (with $U = B_\frac12(\u_0)$ for $T=0$). Suppose by contradiction that $\tau^* < T^*$. Let 
$$\tau = \frac12 T(\norm{\v(\u_0)}_{C[0,\tau^* + \epsilon];\H^2} + 1, \norm{\u_0} + 1, \norm{\stick}_{C([0,\tau^* + \epsilon];\C^0)}),$$
where $T(a,b,c)$ is defined as in the proof of Proposition \ref{lwp}. Proceeding as in the proof of Proposition
\ref{lwp}, $\v(\tilde\u_0)(t + [(\tau^*-\tau)\vee 0])$ will satisfy \eqref{iterated} with $s= (\tau^*-\tau) \vee 0$. Let $U$ be the set corresponding to $T=s$ in the definition of $\tau^*$. By definition of $\tau$, proceeding as in the proof of Proposition \ref{lwp}, 
$$
\begin{aligned}
&(\Gamma_{\tilde \u_0} \v) (t+s) \\
:=& S(t)\v(\tilde\u_0)(s) + \int_0^t S(t-t')\vec{0}{(S(t')(S(s)\tilde\u_0)+\stick_{s+t'}(\xi) + v(s+t'))^3} \d t'
\end{aligned}
$$
will be a contraction (with Lipschitz constant $\frac12$) in the ball
$$B_R:=B_{2C(2+\norm{\v(\u_0)}_{C([0,\tau^* + \epsilon];\H^2)})} \subseteq C([s,(s+2\tau) \wedge (\tau^*+\epsilon)]; \H^2)$$
for every $\tilde\u_0 \in U$.
Moreover, these solutions will satisfy
\begin{align*}
&\Gamma_{\u_0}(\v(\tilde\u_0))(s+t)-\v(\tilde\u_0)(s+t)\\
=&\Gamma_{\u_0}(\v(\tilde\u_0))(s+t)-\Gamma_{\tilde\u_0}(\v(\tilde\u_0))(s+t)\\
=&S(t)(\v(\u_0)(s)-\v(\tilde\u_0)(s))\\
&+\int_0^t S(t-t')\vec{0}{(S(t')(S(s)\u_0)+\stick_{s+t'}(\xi) + v(s+t'))^3} \d t' \\
&-\int_0^t S(t-t')\vec{0}{(S(t')(S(s)\tilde\u_0)+\stick_{s+t'}(\xi) + v(s+t'))^3} \d t',
\end{align*}
so proceeding as in the proof of Proposition \ref{lwp}, and recalling that $2\tau\le 1$,
\begin{align*}
&\norm{\v(\u_0) - \v(\tilde\u_0)}_{C([s,\tau^*+\epsilon \wedge \tau];\H^2)} \\
&\lesssim \norm{\Gamma_{\u_0}(\v(\tilde\u_0))(s+t)-\v(\tilde\u_0)(s+t)}\\
&\begin{multlined}
\lesssim \norm{\v(\u_0)(s)-\v(\tilde\u_0)(s)}_{\H^2} \\+ 
(\norm{\u_0}_{X^\alpha} + \norm{\tilde \u_0}_{X^\alpha} + \norm{\stick(\xi)}_{C([0,\tau^*+\epsilon];\C^0)} + R)^2\norm{\u_0-\tilde\u_0}_{X^\alpha}.
\end{multlined}
\end{align*}
Therefore, for $\norm{\u_0-\tilde\u_0}_{X^\alpha} \to 0$, by definition of $\tau^*$ we have that 
$$\norm{\v(\u_0)(s)-\v(\tilde\u_0)(s)}_{\H^2} \to 0$$
as well and so 
$$\norm{\v(\u_0) - \v(\tilde\u_0)}_{C([s,\tau^*+\epsilon];\H^2)} \to 0,$$
which implies that 
in a small neighbourhood  $V\cap U$ around $\u_0$, 
$$\norm{\v(\tilde\u_0)}_{C([0,\tau^* + \epsilon\wedge\tau];\H^2)} \le \norm{\v(\u_0)}_{C([0,\tau^* + \epsilon\wedge\tau];\H^2)} + 1$$ 
and 
$$\lim_{\tilde\u_0\to\u_0}\v(\tilde\u_0) = \v(\tilde\u)\text{ in }C([0,\tau^* + \epsilon\wedge\tau];\H^2),$$ 
which contradicts the maximality of $\tau^*$.
\end{proof}

\begin{Lemma} \label{Nstability}
Let $\v_0\in\H^2$, $\u_0\in X^\alpha$, and let $\v_N$ be the solution of \eqref{iterableN}. Suppose that for some $K\in\R$,
\begin{equation*}
\sup_{N} \sup_{0\le t \le \bar T} \norm{\v_N}_{\H^2} \le K < +\infty.
\end{equation*}
Then the solution $\v$ to \eqref{iterable} satisfies $T^* \ge \bar T$, 
$\sup_{0\le t \le \bar T} \norm{\v}_{\H^2} \le K$, and $\norm{\v(t) - \v_N(t)}_{C([0,\bar T];\H^2)} \to 0$ as $N \to \infty$.
\end{Lemma}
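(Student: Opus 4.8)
The plan is a continuity/bootstrap argument comparing $\v$ with $\v_N$ on the interval $[0,\bar T]$, exploiting that by hypothesis the $\v_N$ stay uniformly bounded in $\H^2$. First I would fix the relevant data-norms: set $M := \norm{\u_0}_{X^\alpha}$ and $L := \norm{\stick(\xi)}_{C([0,\bar T+1];\C^0)}$, and observe that by Lemma \ref{lwp} both $\v$ and $\v_N$ exist at least up to a common positive time $T_0 = T(\norm{\v_0}_{\H^2}, M, L)$, with the standard a priori bound $\sup_{[0,T_0]}\norm{\v}_{\H^2}, \sup_{[0,T_0]}\norm{\v_N}_{\H^2} \le 2C(1+\norm{\v_0}_{\H^2})$. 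Using the product estimate $\norm{abc}_{L^2}\lesssim\norm{a}_{H^2}\norm{b}_{H^2}\norm{c}_{H^2}$ together with the Sobolev embedding $H^2\into L^\infty$ (exactly as in the proof of Proposition \ref{lwp}), and the fact that $\norm{P_{\le N}f}_{L^2}\le\norm{f}_{L^2}$, I would write, for $t$ in a small interval,
\[
\norm{\v(t)-\v_N(t)}_{\H^2} \lesssim \int_0^t \Big( \norm{(\cdots)^3 - P_{\le N}(P_{\le N}(\cdots))^3} \Big)_{L^2}\, \d t',
\]
and split the integrand into (a) the genuine difference $\v - \v_N$, which is multiplied by a factor controlled by $(M + L + K + \norm{\v_0}_{\H^2})^2$ and produces a term $\lesssim t\cdot(\cdots)^2\sup\norm{\v-\v_N}_{\H^2}$ closing the Grönwall loop, and (b) the ``truncation error'' coming from replacing $u = S(t')u_0 + \stick_{t'} + v$ by its $P_{\le N}$-truncations inside the cube, which I would bound by $\norm{P_{>N}(S(t')\u_0 + \stick_{t'}(\xi) + \v_N(t'))}_{\C^0 \text{ or } \H^2}$ times lower-order factors.

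The key point for term (b) is that it tends to $0$ as $N\to\infty$ uniformly on $[0,\bar T]$: indeed $\norm{P_{>N}S(t')\u_0}_{\C^\beta}\to0$ by the estimate $\norm{\u-P_{\le N}\u}_{X^\alpha}\lesssim N^{-(\alpha'-\alpha)/2}\norm{\u}_{X^{\alpha'}}$ recorded in Section 1.2 (using $\stick_t\in X^\alpha$ and the fact that $\u_0\in X^\alpha$ means $S(t')\u_0$ lies in $\C^\alpha$ uniformly), $\norm{P_{>N}\stick_{t'}(\xi)}_{\C^\beta}\to0$ on compact time intervals by Proposition \ref{stickreg} and \ref{stickX} (a.s.\ the stochastic convolution is in $C_t\C^\alpha$, so its high-frequency tails vanish uniformly in $t\le\bar T$), and $\norm{P_{>N}\v_N(t')}_{\H^2}$ — this is the subtle one — need not go to zero for a fixed $N$, but the term it generates is multiplied by another small/bounded factor; more precisely I would only use $\v_N\in\H^2$ bounded by $K$ to keep the cubic factor under control and put the decaying high-frequency cutoff on the two \emph{linear} pieces $S(t')\u_0$ and $\stick_{t'}$, plus handle the $(P_{\le N}v_N)^3$ vs.\ $v^3$ discrepancy entirely through the difference $\v-\v_N$ plus a $\norm{P_{>N}\v_N}_{\H^2}$ contribution which, since $\v_N$ is bounded in $\H^2$ and $\H^2\cap\{$cube $\le N'\}$ is dense, can be made small after first fixing $N'\ll N$ — this two-parameter argument is the main obstacle.

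Having these two ingredients, I would run the standard argument: define
\[
\tau^* := \sup\Big\{ T \le \bar T \;\Big|\; \sup_{[0,T]}\norm{\v}_{\H^2} \le K+1 \text{ and } \norm{\v-\v_N}_{C([0,T];\H^2)} \to 0 \Big\}.
\]
By the local theory $\tau^*>0$. Suppose $\tau^* < \bar T$. On $[0,\tau^*]$ the bound $\sup\norm{\v}_{\H^2}\le K+1$ holds by continuity and the already-established convergence; then starting from time $s := (\tau^* - \tau)\vee 0$ for a fixed step $\tau = \tau(K+2, M+1, L)\wedge 1$ depending only on the data, both $\v(s+\cdot)$ and $\v_N(s+\cdot)$ solve the ``translated'' integral equations \eqref{iterated}, \eqref{iteratedN} with frozen data $\v(s), \v_N(s)$ (resp.), $S(s)\u_0$, $\stick_{s+\cdot}$, and both stay in a fixed ball $B_R$ of $C([s,s+2\tau];\H^2)$ — here I use $\sup_N\sup_{[0,\bar T]}\norm{\v_N}_{\H^2}\le K$ crucially to know the $\v_N$ don't leave the ball. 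On this ball $\Gamma$ is a contraction with Lipschitz constant $\tfrac12$ uniformly in $N$, so
\[
\norm{\v-\v_N}_{C([s,s+2\tau];\H^2)} \le 2\,\norm{\v(s)-\v_N(s)}_{\H^2} + 2\,(\text{truncation error on }[s,s+2\tau]),
\]
which tends to $0$ as $N\to\infty$ since the first term does (as $s\le\tau^*$, where convergence holds) and the second does by the uniform-in-time decay of the high-frequency tails. This shows $\sup_{[0,s+2\tau]}\norm{\v}_{\H^2}\le K+1$ and $\norm{\v-\v_N}_{C([0,s+2\tau];\H^2)}\to0$, contradicting the maximality of $\tau^*$ (note $s+2\tau > \tau^*$). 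Hence $\tau^* = \bar T$, which gives $\sup_{[0,\bar T]}\norm{\v}_{\H^2}\le K+1 < \infty$, so in particular $T^*\ge\bar T$ by the blowup criterion \eqref{blowup}; and the inequality $\sup_{[0,\bar T]}\norm{\v}_{\H^2}\le K$ (rather than $K+1$) then follows since $\v$ is the $\H^2$-limit of the $\v_N$, each bounded by $K$, and norms are lower semicontinuous under this convergence. The convergence $\norm{\v(t)-\v_N(t)}_{C([0,\bar T];\H^2)}\to0$ is exactly the content of $\tau^* = \bar T$.
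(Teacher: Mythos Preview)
Your overall bootstrap/continuity strategy is correct and matches the paper's proof closely: define a maximal time up to which $\v_N\to\v$, restart from a point $s$ slightly before it using the uniform bound $K$, and derive a contradiction. The divergence is in your handling of the truncation error (your term (b)), which you flag as ``the main obstacle'' requiring a two-parameter density argument to control $\norm{P_{>N}\v_N}_{\H^2}$. As stated, that argument does not work: boundedness of $\{\v_N\}$ in $\H^2$ together with density of finite-Fourier-support functions does \emph{not} force $\norm{P_{>N}\v_N}_{\H^2}\to 0$ (consider a sequence whose mass sits at frequency $\sim N$). More importantly, you never need this quantity in the $\H^2$ norm.

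The paper's resolution is a one-line Bernstein estimate. It writes the comparison as $\norm{\v-\v_N}\le 2\norm{\Gamma(\v_N)-\v_N}$, where $\Gamma$ is the \emph{untruncated} Duhamel map (a $\tfrac12$-contraction on the relevant ball), and computes $\Gamma(\v_N)-\v_N$ directly; the resulting terms are controlled by $\norm{P_{>N}(S(t')\u_0+\stick_{t'}+v_N)^3}_{L^2}$ and $\norm{P_{>N}(S(t')\u_0+\stick_{t'}+v_N)}_{L^2}$. Since $\v_N\in\H^2\hookrightarrow\C^\alpha$ uniformly, the full sum lies in $C^\alpha$ with a uniform bound, and Bernstein gives $\norm{P_{>N}w}_{L^2}\lesssim N^{-\alpha}\norm{w}_{C^\alpha}$ for both the sum and its cube (the latter via the algebra property of $C^\alpha$). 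This yields the quantitative rate
\[
\norm{\Gamma(\v_N)-\v_N}_{C([0,T];\H^2)}\lesssim TN^{-\alpha}\big(\norm{\u_0}_{X^\alpha}^3+\norm{\stick}_{C([0,T];\C^\alpha)}^3+R^3\big),
\]
with no two-parameter argument needed. A minor simplification: the paper's maximal time $\mathscr T$ tracks only the convergence $\v_N\to\v$; the bound $\norm{\v(s)}_{\H^2}\le K$ then comes for free from $\v_N(s)\to\v(s)$, so there is no need to carry the auxiliary condition $\sup\norm{\v}\le K+1$ through the bootstrap.
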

\begin{proof}
Let 
$$\Gamma(\v)(t) = S(t)\v_0 -  \int_0^t S(t-t')\vec{0}{(S(t')\u_0 + \stick_{t'}(\xi) + v)^3}\d t'.\\$$
In Proposition \ref{lwp} we have shown that $\Gamma$ is a contraction (with $\Lip(\Gamma) \le \frac12$) in $B_R \subset C([0,T];\H^2)$,
where 
$$R = C(\norm{\v_0}_{\H^2} +1) \text{ and }T=T(\norm{\v_0}_{\H^2},\norm{\u_0}_{X^\alpha},\norm{\stick}_{C([0,1];\C^0)}).$$
We have that 
\begin{equation*}
\begin{aligned}
&\Gamma(\v_N)(t) - \v_N(t) \\
=&~S(t)\v_0 -  \int_0^t S(t-t')P_{\le N}\vec{0}{(P_{\le N}(S(t')\u_0 + \stick_{t'}(\xi) + v_N))^3}\d t' - \v_N(t) \\
&-  \int_0^t S(t-t')P_{\le N}\vec{0}{(S(t')\u_0 + \stick_{t'}(\xi) + v_N)^3 - (P_{\le N}(S(t')\u_0 + \stick_{t'}(\xi) + v_N))^3}\d t' \\
&- \int_0^t S(t-t')P_{>N}\vec{0}{(S(t')\u_0 + \stick_{t'}(\xi) + v_N)^3}\d t' \\
=&-  \int_0^t S(t-t')P_{\le N}\vec{0}{(S(t')\u_0 + \stick_{t'}(\xi) + v_N)^3 - (P_{\le N}(S(t')\u_0 + \stick_{t'}(\xi) + v_N))^3}\d t' \\
&- \int_0^t S(t-t')P_{>N}\vec{0}{(S(t')\u_0 + \stick_{t'}(\xi) + v_N)^3}\d t',
\end{aligned}
\end{equation*}
so 
\begin{align*}
&~\norm{\Gamma(\v_N) - \v_N}_{C([0,T];\H^2)}\\
\lesssim 
&~T\sup_{0\le t \le T} \norm{P_{>N} (S(t')\u_0 + \stick(\xi) + v_N)^3}_{L^2} \\
&~+ T\sup_{0\le t \le T} \norm{P_{>N} (S(t')\u_0 + \stick(\xi) + v_N)}_{L^2}\norm{(S(t')\u_0 + \stick(\xi) + v_N)^2}_{C^\epsilon}.
\end{align*}
By Bernstein's inequalities, $\norm{P_{>N} w}_{L^2} \lesssim N^{-\alpha}\norm{w}_{H^\alpha}
\lesssim N^{-\alpha}\norm{\w}_{\C^\alpha}$. Therefore, for $\alpha < \frac12$,
\begin{align*}
\norm{\Gamma(\v_N) - \v_N}_{C([0,T];\H^2)} 
&\lesssim TN^{-\alpha} (\norm{\u_0}_{X^\alpha}^3 +\norm{\stick}_{C([0,T];\C^\alpha)}^3 + \norm{\v_N}_{C([0,T];\H^2)}^3)\\
& \lesssim TN^{-\alpha} (\norm{\u_0}_{X^\alpha}^3 +\norm{\stick}_{C([0,T];\C^\alpha)}^3 + R^3),
\end{align*}
therefore for $N$ big enough, since $\Gamma$ is a contraction,
then 
\begin{equation*}
\norm{\v-\v_N}_{C([0,T];\H^2)} \lesssim TN^{-\alpha} (\norm{\u_0}_{X^\alpha}^3 +\norm{\stick}_{C([0,T];\C^\alpha)}^3 + R^3) \to 0 \text{ as }N\to\infty.
\end{equation*}
Now let $\mathscr T:=\sup\{\tau > 0 $ s.t. $\norm{\v(t) - \v_N(t)}_{C([0,\tau];\H^2)} \to 0$ as $N \to \infty\}$. 
We just proved that $\mathscr T \ge T(\norm{\v_0}_{\H^2},\norm{\u_0}_{X^\alpha},\norm{\stick}_{C([0,1];\C^0)})$.
Suppose by contradiction that $\mathscr T < \bar T$ or that $\mathscr T = \bar T$ but $\norm{\v(t) - \v_N(t)}_{C([0,\mathscr T];\H^2)} \not\to 0$ as $N\to \infty$. Let 
$T:= T(K,\norm{\u_0}_{X^\alpha},\norm{\stick}_{C([0,\bar T +1];\C^0)})$. Let $s:= \mathscr T - \frac T2$. 
Since $s< \mathscr T$, we have that $\v_N(s) \to \v(s)$ in $\H^2$, so $\norm{\v(s)} \le K$. Then $\v$ solves 
\eqref{iterated} in (at least) the interval $[s,s+T]$ and $\v_N$ solves \eqref{iteratedN} in the same interval. 
Redefining 
\begin{equation*}
\Gamma(\v):=S(t)\v(s) + \int_0^t S(t-t')\vec{0}{(S(t')(S(s)\u_0)+\stick_{s+t'}(\xi) + v(t'))^3} \d t',
\end{equation*}
and proceeding as before, we have that 
\begin{equation*}
\norm{\Gamma(\v_N)-\v_N}_{C([s,s+T];\H^2)} \lesssim TN^{-\alpha} (\norm{\u_0}_{X^\alpha}^3 +\norm{\stick}_{C([0,\bar T+1];\C^\alpha)}^3 + R^3),
\end{equation*}
and since $\Gamma$ is a contraction,
\begin{equation*}
\norm{\v-\v_N}_{C([s,s+T];\H^2)} \lesssim TN^{-\alpha} (\norm{\u_0}_{X^\alpha}^3 +\norm{\stick}_{C([0,\bar T+1];\C^\alpha)}^3 + R^3),
\end{equation*}
so we have that $\v_N\to \v$ uniformly in the interval $[s,s+T]$. Joining this with convergence in the interval
$[0,s]$, we have that 
\begin{equation*}
\norm{\v(t) - \v_N(t)}_{C([0,s+T];\H^2)} =\norm{\v(t) - \v_N(t)}_{C([0,\mathscr T + \frac T2];\H^2)} \to 0,
\end{equation*}
which is in contradiction with the definition of $\mathscr T$.
\end{proof}
\subsection{Global well posedness}
Consider the energy 
\begin{equation*}
E(\v) := \frac 12 \int v_t^2 +\frac12 \int v^2 + \frac12 \int (\Delta v)^2 + \frac14 \int v^4 + \frac18 \int(v+v_t)^2.
\end{equation*}
\begin{Proposition}\label{energy_estimate}
For every $0<\alpha<\frac12$, there exist $c > 0$ such that for every solution $\v_N$ of \eqref{iterableN} we have
\begin{equation*}
E(\v_N(t)) \lesssim_\alpha e^{-ct} E(\v_0) + \Big(1+  \norm{\u_0}_{X^\alpha}^\frac8\alpha + \norm{\stick_t}_{\C^\alpha}^4 + \int_0^t e^{-c(t-t')}\norm{\stick_{t'}}_{\C^\alpha}^\frac8\alpha \d t'\Big).
\end{equation*}
\end{Proposition}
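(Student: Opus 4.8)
The plan is a modified-energy / Grönwall argument for the damped beam equation satisfied by $\v_N$, with $z(t)$, the first component of $S(t)\u_0+\stick_t(\xi)$, treated as a prescribed inhomogeneity. First I would record, from the definition of $X^\alpha$ and the corresponding bound for the velocity component, that $\norm{z(t)}_{C^\alpha}+\norm{\partial_t z(t)}_{C^{\alpha-2}}\lesssim e^{-t/8}\norm{\u_0}_{X^\alpha}+\norm{\stick_t}_{\C^\alpha}$ (note $\partial_t z$ is the second component of $S(t)\u_0+\stick_t$). Writing $v:=v_N$ and $\zeta:=P_{\le N}z$, equation \eqref{iterableN} is equivalent to $\partial_{tt}v+\partial_t v+v+\Delta^2 v+P_{\le N}\big[(\zeta+v)^3\big]=0$; since $S(t)$ commutes with $P_{\le N}$ and in every application $\v_0$ is $P_{\le N}$-supported (it is $\0$, or some $\v_N(s)$), I may and will assume $v=P_{\le N}v$ and $v_t=P_{\le N}v_t$ throughout, so that $P_{\le N}$ passes harmlessly through any pairing against $v$ or $v_t$ (using $L^p$-boundedness of $P_{\le N}$, uniformly in $N$) and the orthogonality $P_{\le N}\perp P_{>N}$ kills the would-be error $\langle v_t,v^3-P_{\le N}v^3\rangle=\langle P_{>N}v_t,v^3\rangle=0$; this is what keeps all constants $N$-uniform.

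Next I would differentiate $E(\v_N(t))$. The Hamiltonian part of $E$ contributes $-\norm{v_t}_{L^2}^2+\langle v_t,v^3-P_{\le N}(\zeta+v)^3\rangle$, while $\tfrac18\int(v+v_t)^2$, after substituting the equation, contributes $-\tfrac14\big(\norm{v}_{L^2}^2+\norm{\Delta v}_{L^2}^2+\int v^4\big)-\tfrac14\langle v_t,v\rangle$ plus the two full time-derivatives $-\tfrac18\tfrac{d}{dt}\norm{\Delta v}_{L^2}^2$, $-\tfrac1{16}\tfrac{d}{dt}\int v^4$ coming from $\langle v_t,\Delta^2 v\rangle$ and $\langle v_t,v^3\rangle$; moving those onto the left replaces $E$ by an equivalent energy $\hat E\sim E$ whose dissipative main part is $\lesssim-\norm{v_t}_{L^2}^2-c(\norm{v}_{L^2}^2+\norm{\Delta v}_{L^2}^2+\int v^4)\le-c\hat E$. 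Expanding $P_{\le N}(\zeta+v)^3$ and using $v=P_{\le N}v=P_{\le N}v_t$, the nonlinear errors form a finite sum of multilinear terms of schematic type $\langle v_t,\zeta^3\rangle,\langle v_t,\zeta^2 v\rangle,\langle v_t,\zeta v^2\rangle$ and $\langle v,\zeta^3\rangle,\langle v,\zeta^2 v\rangle,\langle v,\zeta v^2\rangle$ (the genuine cubic $\langle v,v^3\rangle=\int v^4\ge0$ only helps). All but one of these are directly absorbable by Gagliardo–Nirenberg plus Young: for instance $|\langle v_t,\zeta^2 v\rangle|\le\norm{v_t}_{L^2}\norm{\zeta}_{L^6}^2\norm{v}_{L^6}\lesssim\norm{z}_{C^\alpha}^2\norm{v_t}_{L^2}\norm{v}_{L^4}^{4/5}\norm{v}_{H^2}^{1/5}\le\delta\big(\norm{v_t}_{L^2}^2+\int v^4+\norm{v}_{H^2}^2\big)+C_\delta\norm{z}_{C^\alpha}^{10}$ by Young with exponents $(2,5,10,5)$, and the remaining ones are easier (powers at most $6$); the point in each case is that the interpolation keeps the power of $\norm{\Delta v}_{L^2}$ strictly sub-quadratic, so Young leaves room for a large but finite power of a $z$-norm. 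The one dangerous term is $\langle v_t,\zeta v^2\rangle$: since $\norm{v_t}_{L^2}$ and $\norm{v}_{L^4}^2$ are each only $\lesssim\hat E^{1/2}$, it is merely $\lesssim\norm{z}_{C^\alpha}\hat E$, i.e.\ scaling-critical, and a Grönwall with that coefficient is useless because $\int_0^t\norm{\stick_s}_{\C^\alpha}\,ds$ grows linearly in $t$. Here I would use the integration-by-parts-in-time trick of \cite{op16}: from $3v^2 v_t=\partial_t(v^3)$ one gets $3\langle v_t,\zeta v^2\rangle=\tfrac{d}{dt}\langle\zeta,v^3\rangle-\langle\partial_t\zeta,v^3\rangle$, so I move the correction $\langle\zeta,v^3\rangle$ into the energy, setting $\tilde E:=\hat E+\lambda\langle\zeta,v^3\rangle$ with $\lambda$ chosen to cancel the critical term; since $|\langle\zeta,v^3\rangle|\le\norm{\zeta}_{L^4}\norm{v}_{L^4}^3\lesssim\norm{z}_{C^\alpha}\norm{v}_{L^4}^3\le\delta\int v^4+C_\delta\norm{z}_{C^\alpha}^4$, the energies $\tilde E,\hat E,E$ are mutually comparable up to an additive $O(\norm{z}_{C^\alpha}^4)=O\big(1+\norm{\u_0}_{X^\alpha}^{8/\alpha}+\norm{\stick_t}_{\C^\alpha}^4\big)$, and $\tfrac{d}{dt}\tilde E$ no longer sees $\langle v_t,\zeta v^2\rangle$ — at the cost of the new term $-\lambda\langle\partial_t\zeta,v^3\rangle$.

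It remains to control $\langle\partial_t\zeta,v^3\rangle$, and this I expect to be the main obstacle. Now $\partial_t\zeta=P_{\le N}\partial_t z$ lives only in $C^{\alpha-2}$, so one pairs via $B^{\alpha-2}_{\infty,\infty}$–$B^{2-\alpha}_{1,1}$ duality and must bound $\norm{v^3}_{B^{2-\alpha}_{1,1}}$; a paraproduct decomposition, putting the $(2-\alpha)$ derivatives on one factor and invoking a Sobolev embedding with a small gain to write that factor's norm as $\norm{\Delta v}_{L^2}^{1-\sigma}\norm{v}_{L^2}^{\sigma}$ (so that the power of $\norm{\Delta v}_{L^2}$ stays strictly below $2$) while the other two factors sit in $L^4$-type norms, followed by Young's inequality, should give $|\langle\partial_t\zeta,v^3\rangle|\le\delta\big(\norm{v_t}_{L^2}^2+\norm{v}_{L^2}^2+\norm{\Delta v}_{L^2}^2+\int v^4\big)+C_\delta\norm{\partial_t z}_{C^{\alpha-2}}^{8/\alpha}$. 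This estimate is essentially tight: it is what pins down the exponent $\tfrac8\alpha$ and forces $\alpha<\tfrac12$ (i.e.\ $\alpha<\tfrac{s-d}2$), and it is the one place where the negative regularity of $\partial_t\zeta$ makes the argument delicate. Collecting everything, $\tfrac{d}{dt}\tilde E\le-c\tilde E+C\big(1+\norm{z(t)}_{C^\alpha}^{8/\alpha}+\norm{\partial_t z(t)}_{C^{\alpha-2}}^{8/\alpha}\big)$; integrating this, bounding $\norm{z(t)}_{C^\alpha}^{8/\alpha}+\norm{\partial_t z(t)}_{C^{\alpha-2}}^{8/\alpha}\lesssim e^{-t}\norm{\u_0}_{X^\alpha}^{8/\alpha}+\norm{\stick_t}_{\C^\alpha}^{8/\alpha}$, using $\int_0^t e^{-c(t-t')}e^{-t'}\,dt'\lesssim e^{-ct}$, and finally passing back through $E\lesssim\tilde E+\norm{z}_{C^\alpha}^4$ together with $\tilde E(\v_0,0)\lesssim E(\v_0)+\norm{\u_0}_{X^\alpha}^4$ (again by Young, since $4<\tfrac8\alpha$), yields the claimed bound. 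The two recurring technical points are thus uniformity in $N$ — handled by $L^p$-boundedness of $P_{\le N}$ and the orthogonality above — and the borderline estimate for $\langle\partial_t\zeta,v^3\rangle$, which is the heart of the matter.
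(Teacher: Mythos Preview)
Your proposal is correct and follows essentially the same route as the paper: a modified energy containing the damping correction $\tfrac18\int(v+v_t)^2$, the integration-by-parts-in-time trick $3\langle v_t,\zeta v^2\rangle=\partial_t\langle\zeta,v^3\rangle-\langle\partial_t\zeta,v^3\rangle$ to neutralise the scaling-critical term, absorption of the remaining cross terms by Young, and Gr\"onwall on the resulting differential inequality. The one place where the paper's execution is cleaner than your sketch is the estimate for $\langle\partial_t z,v^3\rangle$: rather than a paraproduct/Besov argument, the paper interpolates directly between $\|v^3\|_{L^{4/3}}\lesssim E^{3/4}$ and $\|v^3\|_{W^{2,1}}\lesssim E$ (fractional Leibniz) to get $\|v^3\|_{W^{2-\alpha,p}}\lesssim E^{1-\alpha/8}$ with $\tfrac1p=1-\tfrac\alpha8$, then pairs against $\partial_t z\in W^{\alpha-2,p'}$; this immediately yields the bound $E^{1-\alpha/8}(\|\u_0\|_{X^\alpha}+\|\stick_t\|_{\C^\alpha})$ and, after Young, the exponent $8/\alpha$.
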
\noindent
Together with Lemma \ref{Nstability}, this implies that 
\begin{Corollary} \label{gwp}
Let $\v$ be a solution of \eqref{veqn} and let $\v_N$ be a solution of \eqref{veqnN}. 
Then for every $0< \alpha <\frac12$ and for every $N>1$ we have that 
\begin{equation*}
\norm{\v_N}_{\H^2}^2 \lesssim_\alpha \Big(1+  \norm{\u_0}_{X^\alpha}^\frac8\alpha + \norm{\stick_t}_{\C^\alpha}^4 + \int_0^t e^{-c(t-t')}\norm{\stick_{t'}}_{\C^\alpha}^\frac8\alpha \d t'\Big) 
< +\infty \text{ a.s.}.
\end{equation*}
Moreover, $\v$ is a.s.\ defined on the half-line $[0,+\infty)$, it satisfies the same estimate 
\begin{equation} \label{vgrowth}
\norm{\v}_{\H^2}^2 \lesssim_\alpha \Big(1+  \norm{\u_0}_{X^\alpha}^\frac8\alpha + \norm{\stick_t}_{\C^\alpha}^4 + \int_0^t e^{-c(t-t')}\norm{\stick_{t'}}_{\C^\alpha}^\frac8\alpha \d t'\Big) 
\end{equation}
 and for every $T < +\infty$,
\begin{equation*}
\norm{\v-\v_N}_{C([0,T];\H^2)} \to 0 \text{ a.s.}.
\end{equation*}
\end{Corollary}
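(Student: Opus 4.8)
The plan is to run a Gronwall argument on a quantity comparable to $E(\v_N(t))$. Write $v=v_N$, $w=\partial_t v_N$ and $z=S(t)\u_0+\stick_t$ (first component), so that $u=z+v$ and, by \eqref{bvecN}, $\partial_t^2 v+\partial_t v+v+\Delta^2 v=-P_{\le N}\big((P_{\le N}(z+v))^3\big)$. For solutions of \eqref{veqnN} one has $P_{\le N}v=v$ and $P_{\le N}w=w$, since the truncated nonlinearity carries $P_{\le N}$ on the outside and hence $\v_N$ is supported on frequencies $\le N$; writing $\zeta:=P_{\le N}z$, the nonlinearity is then $(\zeta+v)^3$ and the nonlinear interaction in the energy only involves these frequency‑localized objects. (For the general data of \eqref{iterableN}, $P_{>N}\v_N=S(t)P_{>N}\v_0$ decays like $e^{-t/2}\norm{\v_0}_{\H^2}$ and never reaches the nonlinearity, so it only feeds the $e^{-ct}E(\v_0)$ term.) The energy identity below is first derived for smooth data and then extended to the $C_t\H^2$ solutions of Proposition~\ref{lwp} by approximation, using that $\v_N\in C^1_t\H^0$ from the Duhamel formula.

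First I would differentiate $E(\v_N)$ along the flow. Integrating by parts in $x$ and folding the two exact time derivatives that appear ($-\tfrac18\partial_t\norm{\Delta v}_{L^2}^2$ and $-\tfrac1{16}\partial_t\norm{v}_{L^4}^4$) into the energy — which replaces $E$ by an equivalent $\hat E\sim E\sim\norm{w}_{L^2}^2+\norm{v}_{H^2}^2+\norm{v}_{L^4}^4$ — one obtains
\[ \frac{d}{dt}\hat E=\underbrace{\Big(-\norm{w}_{L^2}^2-\tfrac14\dual{w}{v}-\tfrac14\norm{v}_{L^2}^2-\tfrac14\norm{\Delta v}_{L^2}^2\Big)}_{=:L}-\tfrac14\norm{v}_{L^4}^4-\mathcal N, \]
where the damping correction $\tfrac18\norm{v+v_t}_{L^2}^2$ in $E$ is exactly what makes the linear part coercive: Young's inequality gives $L\le-c\hat E+c'\norm{v}_{L^4}^4$, so $L-\tfrac14\norm{v}_{L^4}^4\le-c\hat E$. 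The remainder collects the genuine cross terms, $\mathcal N=\tfrac14\dual{v}{(\zeta+v)^3-v^3}+\tfrac54\dual{w}{(\zeta+v)^3-v^3}$ with $(\zeta+v)^3-v^3=3v^2\zeta+3v\zeta^2+\zeta^3$. All terms of $\mathcal N$ except $\dual{w}{v^2\zeta}$ are handled by H\"older and Young using only $\norm{\zeta}_{L^p}\les\norm{z}_{L^p}\les\norm{z}_{\C^\alpha}$ for $1<p<\infty$ (the $L^p$‑boundedness of $P_{\le N}$): e.g.\ $\dual{v}{v^2\zeta}\le\delta\norm{v}_{L^4}^4+C\norm{z}_{\C^\alpha}^4$, $\dual{w}{\zeta^3}\le\delta\norm{w}_{L^2}^2+C\norm{z}_{\C^\alpha}^6$, and $\dual{w}{v\zeta^2}\les\norm{w}_{L^2}\norm{v}_{H^1}\norm{z}_{\C^\alpha}^2\le\delta\hat E+C\norm{z}_{\C^\alpha}^{p_0}$ after interpolating $\norm{v}_{H^1}$ between $L^4$ and $H^2$; the $\delta\hat E$ and $\delta\norm{v}_{L^4}^4$ pieces are absorbed into $-c\hat E$.

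The term $\dual{w}{v^2\zeta}$ cannot be closed this way — any H\"older/Young bound leaves a factor $\norm{w}_{L^2}^2$ (or $\norm{v}_{L^4}^4$) carrying a coefficient $\norm{z}_{\C^\alpha}^2$ that is not absorbable into the dissipation. This is where the integration‑by‑parts trick of \cite{op16} enters: since $w=v_t$, $wv^2=\tfrac13\partial_t(v^3)$, hence
\[ \int_0^t\dual{w}{v^2\zeta}\,\d t'=\tfrac13\dual{v(t)^3}{\zeta(t)}-\tfrac13\dual{v(0)^3}{\zeta(0)}-\tfrac13\int_0^t\dual{v^3}{\partial_t\zeta}\,\d t'. \]
The boundary term at $t$ is $\les\norm{\zeta(t)}_{L^4}\norm{v(t)}_{L^4}^3\le\delta\norm{v(t)}_{L^4}^4+C\norm{z(t)}_{\C^\alpha}^4$; the first piece returns to the left‑hand side, and $\norm{z(t)}_{\C^\alpha}^4\les e^{-t/2}\norm{\u_0}_{X^\alpha}^4+\norm{\stick_t}_{\C^\alpha}^4$ — this is the origin of the term $\norm{\stick_t}_{\C^\alpha}^4$ in the statement, which appears at the final time rather than under the time integral precisely because it is a boundary contribution. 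The boundary term at $0$ is $\le\delta E(\v_0)+C\norm{\u_0}_{X^\alpha}^4$. The remaining bulk term $\int_0^t\dual{v^3}{\partial_t\zeta}\,\d t'$ is the main obstacle. Here $\partial_t\zeta=P_{\le N}\partial_t z$ and $\partial_t z$ is the second component of $S(t)\u_0+\stick_t$, so $\norm{\partial_t z}_{C^{\alpha-2}}\les e^{-t/8}\norm{\u_0}_{X^\alpha}+\norm{\stick_t}_{\C^\alpha}$. Moving $P_{\le N}$ onto $v^3$ and using that $\alpha<\tfrac12$ forces $2-\alpha>\tfrac32$ (so $v^3\in H^2$, an algebra, while $C^{\alpha-2}\hookrightarrow W^{\alpha-2-\epsilon,q'}$ on $\T^3$ for every $q'\in(1,\infty)$), one gets $|\dual{v^3}{\partial_t\zeta}|\les\norm{v^3}_{W^{2-\alpha+\epsilon,q}}\norm{\partial_t z}_{C^{\alpha-2}}$ for a suitable $q$; estimating $\norm{v^3}_{W^{2-\alpha+\epsilon,q}}$ by the fractional Leibniz rule — placing the derivatives on one factor, keeping the other two in $L^4$, and interpolating the first between $L^4$ and $H^2$ — and then applying Young's inequality yields $|\dual{v^3}{\partial_t\zeta}|\le\delta\hat E+C\big(1+\norm{z}_{\C^\alpha}^{8/\alpha}\big)$, the exponent $\tfrac8\alpha$ reflecting the degeneration of that interpolation as $\alpha\to0$. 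I expect this bulk estimate — squeezing the power of $\hat E$ strictly below $1$ using only the $H^2$‑ and $L^4$‑coercivity of $E$ — to be the delicate point.

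Collecting everything, with $B(t)$ the boundary terms ($|B(t)|\le\tfrac12\hat E(t)+C\norm{z(t)}_{\C^\alpha}^4$ and $|B(0)|\les E(\v_0)+\norm{\u_0}_{X^\alpha}^4$), and after absorbing all $\delta\hat E$‑ and $\delta\norm{v}_{L^4}^4$‑contributions into $-c\hat E$,
\[ \frac{d}{dt}\big(\hat E-B\big)\le-c\hat E+C\big(1+\norm{z(t)}_{\C^\alpha}^{8/\alpha}\big). \]
Applying Gronwall's inequality to $\hat E-B$, then passing back to $E\sim\hat E$ and using $\norm{z(t)}_{\C^\alpha}\le e^{-t/8}\norm{\u_0}_{X^\alpha}+\norm{\stick_t}_{\C^\alpha}$, one is left with a time integral of $e^{-c(t-t')}\big(1+\norm{\u_0}_{X^\alpha}^{8/\alpha}e^{-t'/\alpha}+\norm{\stick_{t'}}_{\C^\alpha}^{8/\alpha}\big)$ plus the boundary contributions; since $\int_0^t e^{-c(t-t')}e^{-t'/\alpha}\,\d t'\les1$, $\int_0^t e^{-c(t-t')}\,\d t'\le c^{-1}$, and $e^{-ct}\norm{\u_0}_{X^\alpha}^4\le1+\norm{\u_0}_{X^\alpha}^{8/\alpha}$, this gives exactly the asserted estimate after renaming $c$.
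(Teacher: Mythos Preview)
Your approach is essentially the same as the paper's: you compute the derivative of a modified energy, handle the benign cross terms by H\"older/Young, treat the dangerous term $\int v_t v^2 z$ via the integration-by-parts-in-time trick (writing $v_t v^2=\tfrac13\partial_t(v^3)$ and estimating $\int v^3\partial_t z$ by interpolation between $L^{4/3}$- and $W^{2,1}$-type bounds on $v^3$), and close with Gronwall --- this is exactly Proposition~\ref{energy_estimate} and Lemmas~\ref{energy_derivative}--\ref{intbyparts}. To finish the Corollary you still need to invoke Lemma~\ref{Nstability}: the uniform-in-$N$ bound you obtain feeds into that lemma to give global existence of $\v$, the same estimate for $\v$, and the convergence $\norm{\v-\v_N}_{C([0,T];\H^2)}\to0$.
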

\begin{Remark} \label{smoothN}
Any solution $\v$ of \eqref{veqnN} actually belongs to $C^1([0,T^*);\C^\infty)$. Indeed, for any $t\le T<T^*$, proceeding like in \eqref{contraction},
\begin{align*}
\norm{\v_N(t)}_{\H^{2+s}} &= \norm{\jap{\nabla}^s \int_0^t P_{\le N}S(t-t') \vec{0}{(P_{\le N}(S(t)\u_0 + \stick(\xi) + v_N))^3}\d t'}_{\H^2} \\
&\lesssim T \sup_{0\le t\le T} \norm{\jap{\nabla}^2 P_{\le N} (S(t)\u_0 + \stick(\xi) + v_N)^3}_{L^2} \\
&\lesssim T N^2 \sup_{0\le t\le T} \norm{(S(t)\u_0 + \stick(\xi) + v_N)^3}_{L^2} < +\infty,
\end{align*}
where we just used that $\norm{\jap{\nabla}^s P_{\le N}}_{L^2\to L^2} \lesssim N^s$. Similarly, 
\begin{align*}
\norm{\partial_t \v(t)}_{\H^{s}} 
&\lesssim T N^s \sup_{0\le t\le T} \norm{(S(t)\u_0 + \stick(\xi) + v)^3}_{L^2} < +\infty.
\end{align*}
Proceeding in this way, it is actually possible to show that $\v \in C^\infty_t([0,T^*);\C^\infty)$, however, we will never need
more regularity than $C^1$ (in time).

In the remainder of this subsection, in order to make the notation less cumbersome, we will omit the subscript $N$ from $\v_N$ whenever it is not relevant in the analysis.
\end{Remark}
\begin{Lemma} \label{energy_derivative}
If $\v$ solves \eqref{veqnN}, then
\begin{align} 
\partial_t E(\v)=& -\frac14\Big(3\int v_t^2 + \int v^2 + \int (\Delta v)^2 + \int v^4\Big) \label{-E}\\
&+\partial_t\Big(\frac18 \int v_t^2 \Big) \label{derivative}\\
&- 3\int v_t v^2 (\stick_t(\xi) + S(t)\u_0) \label{main_term} \\
&-\int (v_t+\frac14 v) [(S(t)\u_0)  3v(\stick_t(\xi) +S(t)\u_0)^2 + (\stick_t(\xi) +S(t)\u_0)^3] \label{lots}\\
&- \frac34\int v^3(\stick_t(\xi) +S(t)\u_0) \label{lots2}\\
&+\int(v_t+\frac14v)P_{>N}(v+\stick_t(\xi)+S(t)\u_0)^3 \label{zeroN}
\end{align}
\end{Lemma}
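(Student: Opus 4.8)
The plan is a direct computation, so the one substantive thing to pin down first is that it is legitimate to differentiate and integrate by parts: by Remark~\ref{smoothN} every solution $\v=\v_N$ of \eqref{veqnN} is $C^1$ in time with values in $\C^\infty$ on its interval of existence, so below one may differentiate $E(\v)$ in $t$, differentiate under the spatial integrals, and integrate by parts on $\T^3$ with no boundary contributions. I would then record the PDE satisfied by the two components of $\v$: writing $u:=v+\stick_t(\xi)+S(t)\u_0$ for the first component (as in \eqref{veqnN}) and $N:=P_{\le N}\big((P_{\le N}u)^3\big)$,
\begin{equation*}
\partial_t v=v_t,\qquad \partial_t v_t=-(1+\Delta^2)v-v_t-N,\qquad\text{so that}\qquad v_t+\partial_t v_t=-(1+\Delta^2)v-N;
\end{equation*}
the last identity is precisely what makes the summand $\tfrac18\int(v+v_t)^2$ of $E$ convenient.

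Next I would differentiate $E(\v)$ summand by summand, using the PDE and integrating by parts to bring every biharmonic pairing to the form $\int v_t\Delta^2 v$. The first four summands of $E$ then contribute $-\int v_t^2-\int v_tN+\int v^3v_t$ --- the cross terms $\int vv_t$ and $\int v_t\Delta^2 v$ cancelling among them --- while $\partial_t\big(\tfrac18\int(v+v_t)^2\big)=\tfrac14\int(v+v_t)(v_t+\partial_t v_t)$ contributes $-\tfrac14\big(\int v^2+\int(\Delta v)^2+\int vv_t+\int v_t\Delta^2 v\big)-\tfrac14\int(v+v_t)N$. The reason for leaving \eqref{derivative} unevaluated is that the only resulting terms that are neither sign-definite nor manifestly cubic, namely $-\tfrac14\int vv_t$ and $-\tfrac14\int v_t\Delta^2 v$, are exactly the matching pieces of $\partial_t\big(\tfrac18\int v_t^2\big)=\tfrac14\int v_t\partial_t v_t$; subtracting \eqref{derivative} removes them, and collecting what is left yields
\begin{equation*}
\partial_t E(\v)=\partial_t\Big(\tfrac18\int v_t^2\Big)-\tfrac14\Big(3\int v_t^2+\int v^2+\int(\Delta v)^2\Big)+\int v^3v_t-\int\big(v_t+\tfrac14 v\big)N.
\end{equation*}

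It then remains to put $\int v^3v_t-\int(v_t+\tfrac14 v)N$ into the stated form. I would first split off $-\tfrac14\int v^4$, which together with the three quadratic integrals completes the parenthesis in \eqref{-E}, leaving $\int(v_t+\tfrac14 v)(v^3-N)$. Since $\v_N$ has Fourier support in $[-N,N]^3$, the discrepancy $u^3-N$ involves only high frequencies (it carries a factor $P_{>N}(\cdot)$), so replacing $N$ by $u^3$ and collecting the error into \eqref{zeroN} reduces the task to expanding $\int(v_t+\tfrac14 v)(v^3-u^3)=-\int(v_t+\tfrac14 v)\big(3v^2w+3vw^2+w^3\big)$ with $w:=\stick_t(\xi)+S(t)\u_0$. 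Sorting by the degree in $v$: the term $-\int(v_t+\tfrac14 v)\,3v^2w$ splits into its $v_t$-piece \eqref{main_term} and its $\tfrac14 v$-piece $-\tfrac34\int v^3w$, which is \eqref{lots2}; and the remaining, lower-order-in-$v$, terms $-\int(v_t+\tfrac14 v)(3vw^2+w^3)$ make up \eqref{lots}. Assembling everything gives the identities \eqref{-E} through \eqref{zeroN}.

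The differential-calculus part is routine once Remark~\ref{smoothN} makes it rigorous, and I expect the only place where real care is needed to be this last step --- tracking the cutoffs $P_{\le N}$ and $P_{>N}$ through the expansion of the truncated nonlinearity $P_{\le N}\big((P_{\le N}u)^3\big)$ so that the low-frequency pieces recombine exactly into \eqref{main_term}, \eqref{lots}, \eqref{lots2} while the high-frequency remainder becomes precisely \eqref{zeroN}.
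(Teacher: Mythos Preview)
Your proposal is correct and follows exactly the paper's approach: the paper's proof consists of citing Remark~\ref{smoothN} for the regularity, writing down the PDE for $\v$, and saying ``compute $\partial_t E$ using the equation to substitute $v_{tt}$''; you simply carry out that computation in detail. One cosmetic point: you use the symbol $N$ both for the frequency threshold and for the truncated nonlinearity $P_{\le N}\big((P_{\le N}u)^3\big)$, which is confusing---and note that the paper's stated formula (and its PDE in the proof) actually uses the nonlinearity $P_{\le N}(u^3)$ without the inner projection, so with that convention $u^3-\text{(nonlinearity)}=P_{>N}(u^3)$ on the nose and your final ``care with the cutoffs'' step becomes immediate.
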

\begin{proof}
By Remark \ref{smoothN}, $E(\v)$ is differentiable, and moreover $\v$ satisfies
\begin{equation*}
\partial_t \begin{pmatrix} v \\ v_t \end{pmatrix} = 
-\begin{pmatrix} 0 & -1 \\ 1 + \Delta^2 & 1\end{pmatrix}\begin{pmatrix} v \\ v_t \end{pmatrix} - \begin{pmatrix} 0 \\ P_{\le N}(v+\stick_t(\xi)+S(t)\u_0)^3 \end{pmatrix}. \\
\end{equation*}
The formula follows by computing $\partial_t E$, and using the equation to substitute the term $v_{tt}$.
\end{proof}
\begin{Lemma} \label{P>N=0}
If $\v$ solves \eqref{veqnN}, then 
\begin{equation*}
\eqref{zeroN} = 0.
\end{equation*}
\end{Lemma}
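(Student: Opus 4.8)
The plan is to observe that any solution $\v$ of \eqref{veqnN} is band-limited, i.e.\ its Fourier support is contained in the cube $[-N,N]^3$. Indeed, since the propagator $S(t)$ is a matrix of Fourier multipliers, it commutes with the sharp cube truncation $P_{\le N}$; thus, on the right-hand side of \eqref{veqnN} one may pull the outer $P_{\le N}$ past $S(t-t')$ and out of the time integral, so that the whole right-hand side equals $P_{\le N}$ applied to some vector. As $P_{\le N}$ is a projection, this forces $P_{\le N}\v = \v$, and hence each component of $\v=(v,v_t)^\textup{T}$ --- in particular $v_t+\frac14 v$ --- lies in the range of $P_{\le N}$. (It is important here that \eqref{veqnN} carries no free-evolution term $S(t)\v_0$, so the \emph{whole} of $\v$, not merely its nonlinear part, is band-limited.)

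Next I would use that $P_{\le N}$ is a self-adjoint orthogonal projection on $L^2(\T^3)$ (it is a composition of one-dimensional Fourier truncations), and recall that $P_{>N}=\id-P_{\le N}$ and $P_{\le N}$ have orthogonal ranges. All the functions in \eqref{zeroN} are real-valued, so the integral there is exactly the real $L^2$-pairing of $v_t+\frac14 v$ with $P_{>N}\big((v+\stick_t(\xi)+S(t)\u_0)^3\big)$; by the previous paragraph the first factor lies in the range of $P_{\le N}$ and the second in the range of $P_{>N}$, whence
\begin{equation*}
\eqref{zeroN} = \big\langle v_t+\tfrac14 v,\, P_{>N}(v+\stick_t(\xi)+S(t)\u_0)^3\big\rangle_{L^2} = \big\langle P_{\le N}\big(v_t+\tfrac14 v\big),\, P_{>N}(v+\stick_t(\xi)+S(t)\u_0)^3\big\rangle_{L^2} = 0.
\end{equation*}

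I do not foresee any real obstacle: the statement reduces to a Fourier-support observation combined with $L^2$-orthogonality of the ranges of $P_{\le N}$ and $P_{>N}$. Two minor points deserve a line of comment --- that $S(t)$ genuinely commutes with the cube multiplier $P_{\le N}$, which is immediate from the explicit symbol of $S(t)$ recalled in Section 1.1, and that one should \emph{not} expect $P_{>N}$ to annihilate the cubic term $(v+\stick_t(\xi)+S(t)\u_0)^3$ by itself, since $\stick_t(\xi)$ and $S(t)\u_0$ have full Fourier support; the cancellation emerges only after integrating against the band-limited factor $v_t+\frac14 v$, which is precisely why the inner and outer truncations $P_{\le N}$ in \eqref{veqnN} are both needed.
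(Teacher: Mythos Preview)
Your proposal is correct and follows essentially the same approach as the paper: both arguments observe that any solution of \eqref{veqnN} satisfies $\v=P_{\le N}\v$ (the paper phrases this as ``$\v=P_{\le N}\w$ for some $\w$''), and then conclude by the $L^2$-orthogonality of the ranges of $P_{\le N}$ and $P_{>N}$. Your additional remarks on the commutation of $S(t)$ with $P_{\le N}$ and on the absence of a free-evolution term $S(t)\v_0$ in \eqref{veqnN} are accurate clarifications of points the paper leaves implicit.
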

\begin{proof}
If $\v$ solves \eqref{veqnN}, then we can write $\v$ in the form $\v = P_{\le N} \w$ for some $\w$, therefore $P_{>N}\v = 0.$ Therefore,
\begin{align*}
\eqref{zeroN} &= \int(v_t+\frac14v)P_{>N}(v+\stick_t(\xi)+S(t)\u_0)^3 \\
&= \int P_{\le N}(v_t+\frac14v)P_{>N}(v+\stick_t(\xi)+S(t)\u_0)^3 = 0.
\end{align*}
\end{proof}
\begin{Lemma} \label{lots_estimate}
If $\v$ solves \eqref{veqnN}, then for every $0<\alpha<\frac12$, 
\begin{equation*}
\eqref{lots} \lesssim E^\frac34 (\norm{\u_0}_{X^\alpha} + \norm{\stick_t(\xi)}_{\C^\alpha})^2 + E^\frac12 (\norm{\u_0}_{X^\alpha} + \norm{\stick_t(\xi)}_{\C^\alpha})^3
\end{equation*}
\end{Lemma}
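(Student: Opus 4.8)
The plan is to expand \eqref{lots} into a sum of spatial integrals of monomials in $v$, $v_t$, $S(t)\u_0$ and $\stick_t(\xi)$, and to bound each such monomial by H\"older's inequality, placing the two ``rough'' factors $S(t)\u_0$ and $\stick_t(\xi)$ in $L^\infty$ while distributing the powers of $v$ and $v_t$ among $L^2$ and $L^4$.

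First I would extract from the definition of $E$ the only estimates needed. Every summand of $E(\v)$ is nonnegative, so $\norm{v_t}_{L^2}^2\le 2E$, $\norm{v}_{L^2}^2\le 2E$, $\norm{\Delta v}_{L^2}^2\le 2E$ and $\norm{v}_{L^4}^4\le 4E$; since $\norm{v}_{H^2}^2$ is equivalent to $\norm{v}_{L^2}^2+\norm{\Delta v}_{L^2}^2$ and $H^2(\T^3)\into L^\infty$, this gives $\norm{v}_{L^\infty}\les E^{1/2}$, and in particular $\norm{v_t+\tfrac14 v}_{L^2}\les E^{1/2}$ and $\norm{v}_{L^4}\les E^{1/4}$. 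For the rough data, $\norm{S(t)\u_0}_{\C^\alpha}\le e^{-t/8}\norm{\u_0}_{X^\alpha}\le\norm{\u_0}_{X^\alpha}$ by definition of $X^\alpha$, and since $\alpha>0$ the first component of any element of $\C^\alpha$ lies in $C^\alpha\into L^\infty$; writing $A:=\norm{\u_0}_{X^\alpha}+\norm{\stick_t(\xi)}_{\C^\alpha}$, both $S(t)\u_0$ and $\stick_t(\xi)$ thus have $L^\infty$ norm $\les A$, so that $\norm{(\stick_t(\xi)+S(t)\u_0)^k}_{L^\infty}\les A^k$ for every $k$.

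I would then treat the two bracketed summands of \eqref{lots} separately. In the first, which carries one power of $v$ beyond the factor $v_t+\tfrac14 v$, I would place $v_t+\tfrac14 v$ in $L^2$ (cost $E^{1/2}$), that extra $v$ in $L^4$ (cost $E^{1/4}$), and the $S(t)\u_0$/$\stick_t(\xi)$ factors in $L^\infty$ — a stray $L^4$ norm of such a factor being controlled by its $L^\infty$ norm since $\T^3$ has finite measure — which yields a bound $\les E^{3/4}A^2$. The second summand $-\int(v_t+\tfrac14 v)(\stick_t(\xi)+S(t)\u_0)^3$ has no extra power of $v$, so placing $v_t+\tfrac14 v$ in $L^2$ and the cube in $L^2$ (again using $|\T^3|<\infty$) gives $\les\norm{v_t+\tfrac14 v}_{L^2}A^3\les E^{1/2}A^3$. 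Adding the two bounds gives the claim.

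The only genuinely delicate point, and the reason the term $\tfrac14\int v^4$ is built into $E$, is that $v$ must be measured in $L^4$ rather than in $L^2$: with an $L^2$ bound on $v$ the first summand would only be $\les E^1$, which is too large to be absorbed into the negative term $-\tfrac14\int v^4$ (and the rest of \eqref{-E}) in the Gr\"onwall argument of Proposition \ref{energy_estimate}. Apart from this observation the proof is routine H\"older bookkeeping, and, the estimate being pointwise in $t$, no smallness or long-time input is needed; so I do not expect a substantive obstacle, only the care of arranging the exponents so that the powers $E^{3/4}$ and $E^{1/2}$ come out exactly as stated.
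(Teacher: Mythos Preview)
Your proposal is correct and matches the paper's proof essentially line for line: the paper also splits \eqref{lots} into the two bracketed summands, bounds the first by placing $v_t+\tfrac14 v$ in $L^2$, $v$ in $L^4$, and $(\stick_t+S(t)\u_0)^2$ in $L^4$ (controlled by $L^\infty$ on the torus), and bounds the second by $L^2\times L^2$. Your extra remarks on why the $L^4$ control of $v$ is the relevant one are accurate but not part of the paper's proof of this lemma.
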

\begin{proof}
By H\"older, we have that 
\begin{equation*}
\int (v_t+\frac14 v) 3v(\stick_t(\xi) +S(t)\u_0)^2 \lesssim (\norm{v}_{L^2}+\norm{v_t}_{L^2})\norm{v}_{L^4}
\norm{(\stick_t(\xi) +S(t)\u_0)^2}_{L^4},
\end{equation*}
so by noticing that $\norm{v_t}_{L^2} \lesssim E^\frac12$, $\norm{v}_{L^2} \lesssim E^\frac12$,
$\norm{v}_{L^4} \lesssim E^\frac 14$, and $$\norm{(\stick_t(\xi) +S(t)\u_0)^2}_{L^4} \lesssim \norm{(\stick_t(\xi) +S(t)\u_0)}_{C^\alpha}^2\lesssim (\norm{\u_0}_{X^\alpha} + \norm{\stick_t(\xi)}_{\C^\alpha})^2,$$
we have that 
\begin{equation*}
\int (v_t+\frac14 v) 3v(\stick +S(t)\u_0)^2 \lesssim E^\frac34 (\norm{\u_0}_{X^\alpha} + \norm{\stick_t(\xi)}_{\C^\alpha})^2.
\end{equation*}
Proceeding similarly, 
\begin{align*}
\int (v_t+\frac14 v) (\stick_t(\xi) +S(t)\u_0)^3 &\lesssim (\norm{v}_{L^2}+\norm{v_t}_{L^2})\norm{(\stick_t(\xi) +S(t)\u_0)^3}_{L^2} \\
&\lesssim E^\frac12 (\norm{\u_0}_{X^\alpha} + \norm{\stick_t(\xi)}_{\C^\alpha})^3.
\end{align*}
\end{proof}
\begin{Lemma} \label{lots2_estimate}
If $\v$ solves \eqref{veqnN}, then for every $0<\alpha<\frac12$, 
\begin{equation*}
\eqref{lots2} \lesssim E^\frac34 (\norm{\u_0}_{X^\alpha} + \norm{\stick_t(\xi)}_{\C^\alpha})^2 
\end{equation*}
\end{Lemma}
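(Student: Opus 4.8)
The plan is to estimate the term \eqref{lots2} by a single application of H\"older's inequality, distributing the three copies of $v$ into $L^4$ and placing the low‑regularity factor $\stick_t(\xi)+S(t)\u_0$ into $L^4$ as well:
\[
\Big|\tfrac34\int v^3(\stick_t(\xi)+S(t)\u_0)\Big|\le \tfrac34\,\norm{v}_{L^4}^3\,\norm{\stick_t(\xi)+S(t)\u_0}_{L^4}.
\]
First I would control $\norm{v}_{L^4}^3$ by the energy: every summand of $E(\v)$ other than $\tfrac14\int v^4$ is nonnegative, so $\tfrac14\int v^4\le E(\v)$ and hence $\norm{v}_{L^4}^3\le(4E(\v))^{3/4}\lesssim E^{3/4}$. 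This is the only place where the quartic part of the energy is genuinely used, and it is exactly what forces the exponent $\tfrac34$; bounding $\norm{v}_{L^4}$ instead through the Sobolev control $\v\in\H^2$ would be wasteful.

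Next I would handle the smooth factor. On $\T^3$ one has $C^\alpha\hookrightarrow L^\infty\hookrightarrow L^4$ (the first embedding for $\alpha\ge 0$, with the Bessel–potential convention for $C^\alpha$), so, passing to the first component,
\[
\norm{\stick_t(\xi)+S(t)\u_0}_{L^4}\lesssim\norm{\stick_t(\xi)+S(t)\u_0}_{C^\alpha}\le\norm{\stick_t(\xi)}_{\C^\alpha}+\norm{S(t)\u_0}_{\C^\alpha}.
\]
By the very definition of the $X^\alpha$ norm, $\norm{S(t)\u_0}_{\C^\alpha}=e^{-t/8}\big(e^{t/8}\norm{S(t)\u_0}_{\C^\alpha}\big)\le e^{-t/8}\norm{\u_0}_{X^\alpha}\le\norm{\u_0}_{X^\alpha}$. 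Combining, $\norm{\stick_t(\xi)+S(t)\u_0}_{L^4}\lesssim\norm{\u_0}_{X^\alpha}+\norm{\stick_t(\xi)}_{\C^\alpha}$, and therefore $\eqref{lots2}\lesssim E^{3/4}\big(\norm{\u_0}_{X^\alpha}+\norm{\stick_t(\xi)}_{\C^\alpha}\big)$. The squared bracket in the statement then follows from the elementary inequality $a\le 1+a^2$ for $a\ge 0$: the resulting extra $E^{3/4}$ is harmless, being absorbed — like all error terms in this subsection — by a small fraction of the coercive term $-\tfrac14\big(3\int v_t^2+\int v^2+\int(\Delta v)^2+\int v^4\big)$ together with Young's inequality in the proof of Proposition \ref{energy_estimate}; alternatively one may just note that the regime of interest is $\norm{\u_0}_{X^\alpha}+\norm{\stick_t(\xi)}_{\C^\alpha}\ge 1$, where $a\le a^2$ directly.

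I do not expect a real obstacle here: the lemma is a routine H\"older estimate, and the only genuine choices are (i) to extract $\norm{v}_{L^4}^3$ from the quartic term of $E(\v)$ rather than from Sobolev control of $\v$, which is what keeps the power of $E$ at $\tfrac34$, and (ii) to convert the $X^\alpha$ bound on $\u_0$ into a pointwise, hence $L^4$, bound on $S(t)\u_0$ via the exponential weight built into $\norm{\cdot}_{X^\alpha}$. As the computation shows, the hypothesis that $\v$ solve \eqref{veqnN} is not actually used: finiteness of $E(\v)$ together with $\u_0\in X^\alpha$ and $\stick_t(\xi)\in\C^\alpha$ is all that is required.
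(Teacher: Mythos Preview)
Your argument is correct and is essentially identical to the paper's: apply H\"older with four factors in $L^4$, bound $\norm{v}_{L^4}^3$ by $E^{3/4}$ via the quartic term in the energy, and control $\norm{S(t)\u_0+\stick_t(\xi)}_{L^4}$ by $\norm{\u_0}_{X^\alpha}+\norm{\stick_t(\xi)}_{\C^\alpha}$ through the $C^\alpha\hookrightarrow L^\infty$ embedding. You are in fact more careful than the paper's one-line proof in justifying the passage from the first power of the bracket (which is what H\"older actually yields) to the square appearing in the statement; the paper simply writes the squared power without comment.
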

\begin{proof}
By H\"older, 
$$\eqref{lots2} \les \norm{v}_{L^4}^3 \norm{S(t)\u_0 + \stick_t(\xi)}_{L^4} \les E^\frac34 (\norm{\u_0}_{X^\alpha} + \norm{\stick_t(\xi)}_{\C^\alpha})^2. $$
\end{proof}

\begin{Lemma} \label{intbyparts}
If $\v$ solves \eqref{veqnN}, then
\begin{equation} \label{part_diff}
\eqref{main_term} = -\partial_t\Big(\int v^3 (\stick_t(\xi) + S(t)\u_0) \Big) + \int  v^3 \partial_t (\stick_t(\xi) + S(t)\u_0),
\end{equation}
and for every $0< \alpha<\frac12$,
\begin{equation*}
\int  v^3 \partial_t (\stick_t(\xi) + S(t)\u_0) \lesssim E^{1-\frac\alpha8} (\norm{\u_0}_{X^\alpha} + \norm{\stick_t(\xi)}_{\C^\alpha}).
\end{equation*}
\end{Lemma}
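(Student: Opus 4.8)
There are two things to establish: the integration-by-parts-in-time identity \eqref{part_diff}, and the bound on the resulting term $\int v^3\,\partial_t(\stick_t(\xi)+S(t)\u_0)$.

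For \eqref{part_diff}, write $\psi_t:=(\stick_t(\xi)+S(t)\u_0)_1$ and $g_t:=(\stick_t(\xi)+S(t)\u_0)_2$ for the two components. Since $S(t)\u_0$ solves the homogeneous linear damped beam equation and $\stick_\cdot(\xi)$ the one with forcing $\sqrt2\xi$, and since the forcing affects only the second component, we have $\partial_t\psi_t=g_t$ as space-time distributions; by Proposition~\ref{stickreg} and the definition of $X^\alpha$, $g_\cdot\in C([0,\infty);C^{\alpha-2})$, so $\psi_\cdot$ is genuinely $C^1$ in $t$. On the other hand, by Remark~\ref{smoothN} a solution $\v=\v_N$ of \eqref{veqnN} satisfies $\v\in C^1_t\C^\infty$, so $t\mapsto v(t)^3$ is $C^1$ with values in $C^\infty(\T^3)$ and $\partial_t(v^3)=3v^2v_t$. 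The product rule applied to $t\mapsto\int v^3\psi_t$ (the pairing $C^\infty\times C^{\alpha-2}$ being continuous) gives
\[\partial_t\Big(\int v^3\psi_t\Big)=3\int v^2v_t\,\psi_t+\int v^3 g_t=-\eqref{main_term}+\int v^3 g_t,\]
and rearranging is exactly \eqref{part_diff}, with $\partial_t(\stick_t(\xi)+S(t)\u_0)$ read as $g_t$.

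For the estimate I would proceed in three steps. \textit{(i)} Since $\norm{g_t}_{C^{\alpha-2}}\le\norm{\stick_t(\xi)}_{\C^\alpha}+\norm{S(t)\u_0}_{\C^\alpha}\le\norm{\stick_t(\xi)}_{\C^\alpha}+e^{-\frac t8}\norm{\u_0}_{X^\alpha}$, it suffices to show $|\int v^3 g|\les E(\v)^{1-\frac\alpha8}\norm{g}_{C^{\alpha-2}}$ for every $g\in C^{\alpha-2}$. \textit{(ii)} As $2-\alpha>0$ and $C^{\alpha-2}=B^{\alpha-2}_{\infty,\infty}$ is the dual of $B^{2-\alpha}_{1,1}$, while $v^3\in C^\infty(\T^3)\subset B^{2-\alpha}_{1,1}$ by Remark~\ref{smoothN}, one has $|\int v^3 g|\les\norm{v^3}_{B^{2-\alpha}_{1,1}}\norm{g}_{C^{\alpha-2}}$; a trilinear paraproduct estimate — obtained by iterating Bony's decomposition and noting that in each resulting term the highest frequency sits on a single factor — gives $\norm{v^3}_{B^{2-\alpha}_{1,1}}\les\norm{v}_{B^{2-\alpha}_{2,1}}\norm{v}_{L^4}^2$. \textit{(iii)} Real interpolation gives $B^{2-\alpha}_{2,1}=(L^2,H^2)_{1-\frac\alpha2,1}$ on $\T^3$, hence $\norm{v}_{B^{2-\alpha}_{2,1}}\les\norm{v}_{L^2}^{\frac\alpha2}\norm{v}_{H^2}^{1-\frac\alpha2}$; since $E$ directly bounds $\norm{v}_{L^2}\les\norm{v}_{L^4}\les E(\v)^{\frac14}$ and $\norm{v}_{H^2}\les E(\v)^{\frac12}$, we get $\norm{v}_{B^{2-\alpha}_{2,1}}\les E(\v)^{\frac12-\frac\alpha8}$ and thus $\norm{v^3}_{B^{2-\alpha}_{1,1}}\les E(\v)^{1-\frac\alpha8}$. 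Combining \textit{(i)}--\textit{(iii)} closes the bound.

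The main obstacle is the regularity bookkeeping in steps \textit{(ii)}--\textit{(iii)}. One cannot pair $v^3$ against $g\in C^{\alpha-2}$ in Hölder spaces: as $v\in H^2(\T^3)$ is only $C^\beta$ for $\beta<\frac12$, so is $v^3$, far short of the $2-\alpha>\frac32$ derivatives one would like to match against $C^{\alpha-2}$; and replacing $C^{\alpha-2}$ by the $L^2$-based space $H^{\alpha-\frac72}$ it embeds into would demand $v^3\in H^{\frac72-\alpha}$, which is false. The remedy is to use the $B^{2-\alpha}_{1,1}$--$B^{\alpha-2}_{\infty,\infty}$ duality, so that only $2-\alpha$ derivatives of $v^3$ \emph{in an $L^1$-based space} are needed; these can be supplied by a product estimate that loads all the derivatives onto one copy of $v$ while keeping the other two in $L^4$. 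It is precisely because $\norm{v}_{L^4}$ is controlled by $E^{\frac14}$ — genuinely smaller than the $E^{\frac12}$ controlling $\norm{v}_{H^2}$ — that the resulting power of $E$ falls below $1$, which is what lets this term be absorbed into $-\partial_t E$ in Proposition~\ref{energy_estimate}; carrying out the interpolation at the $\ell^1$-Besov endpoint $B^{2-\alpha}_{2,1}$ (rather than through an $H^{2-\alpha+\epsilon}$ detour, which would cost an $\epsilon$) is what pins the exponent to exactly $1-\frac\alpha8$.
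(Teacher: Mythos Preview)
Your proof is correct, but takes a genuinely different route from the paper for the estimate. The paper also pairs $v^3$ against $\partial_t(\stick_t+S(t)\u_0)$ at regularity $\alpha-2$, but it works entirely in Sobolev spaces: it first bounds $\norm{v^3}_{L^{4/3}}\les E^{3/4}$ (H\"older) and $\norm{v^3}_{W^{2,1}}\les\norm{v}_{H^2}\norm{v}_{L^4}^2\les E$ (fractional Leibniz), then interpolates \emph{$v^3$ itself} via Gagliardo--Nirenberg to land in $W^{2-\alpha,p}$ with $\frac1p=1-\frac\alpha8$, pairing against $W^{\alpha-2,p'}\supset\C^{\alpha}$. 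You instead stay at the $L^1$--$L^\infty$ endpoint throughout, using the $B^{2-\alpha}_{1,1}$--$B^{\alpha-2}_{\infty,\infty}$ duality, a trilinear product estimate to place all derivatives on one copy of $v$, and then interpolating \emph{$v$} (not $v^3$) between $L^2$ and $H^2$. Both routes hit the same exponent $1-\frac\alpha8$; the paper's is arguably more elementary (no Besov machinery, just Gagliardo--Nirenberg), while yours avoids the auxiliary exponent $p$ and makes the duality more transparent.
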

\begin{proof}
\eqref{part_diff} follows just from Leibniz rule. In order to prove the estimate, notice that 
$\norm{v}_{L^4} \lesssim E^\frac14$, and $\norm{v}_{H^2} \lesssim E^\frac12$. Therefore, by 
H\"older and fractional Leibniz respectively, 
\begin{equation*}
\left\{
\begin{aligned}
\norm{v^3}_{L^\frac43} &\lesssim E^\frac34\\
\norm{v^3}_{W^{2,1}} & \lesssim E.
\end{aligned}
\right.
\end{equation*}
Therefore, by interpolation (Gagliardo - Nirenberg), if $\frac1p = (1-\frac{\alpha}2) + \frac{\alpha}{2}\cdot\frac 34 = 1-\frac\alpha8$, then 
$\norm{v^3}_{W^{2-\alpha,p}} \lesssim E^{(1-\frac{\alpha}2) + \frac{\alpha}{2}\cdot\frac 34} = E^{1-\frac\alpha8}$.
Hence
\begin{align*}
\int  v^3 \partial_t (\stick_t(\xi) + S(t)\u_0) &\lesssim \norm{v^3}_{{W^{2-\alpha,p}}} \norm{\partial_t (\stick_t(\xi) + S(t)\u_0)}_{W^{\alpha-2,p'}} \\
& \lesssim \norm{v^3}_{{W^{2-\alpha,p}}} \norm{\stick_t(\xi) + S(t)\u_0}_{\W^{\alpha,p'}}\\
&\lesssim E^{1-\frac\alpha8} (\norm{\u_0}_{X^\alpha} + \norm{\stick_t(\xi)}_{\C^\alpha}).
\end{align*}
\end{proof}
\begin{proof}[Proof of Proposition \ref{energy_estimate}]
Let $F(\v) := E(\v) - \frac 18\int v_t^2 + \int v^3(\stick_t(\xi) + S(t)\u_0)$. By H\"older and Young's inequalities,
\begin{align*}
\Big|\int v^3(\stick_t(\xi) + S(t)\u_0)\Big| &\le \norm{v}_{L^4}^3 \norm{(\stick_t(\xi) + S(t)\u_0)}_{L^4} \\
&\le E^\frac34(\norm{\stick_t(\xi)}_{C^\alpha}+\norm{\u_0}_{X^\alpha})\\
& \le \frac14 E + \frac{27}4(\norm{\stick_t(\xi)}_{C^\alpha}+\norm{\u_0}_{X^\alpha})^4.
\end{align*}
Therefore,
\begin{gather}
F \le \frac54 E + \frac{27}4(\norm{\stick_t(\xi)}_{C^\alpha}+\norm{\u_0}_{X^\alpha})^4 \label{F<E},\\
E \le 2 F + \frac{27}2(\norm{\stick_t(\xi)}_{C^\alpha}+\norm{\u_0}_{X^\alpha})^4 \label{E<F}.
\end{gather} 
Using Lemma \ref{energy_derivative} and \eqref{part_diff}, we have that 
\begin{align*}
\partial_t F =& -\frac14\Big(3\int v_t^2 + \int v^2 + \int (\Delta v)^2 + \int v^4\Big) \\
&+\int  v^3 \partial_t (\stick_t(\xi) + S(t)\u_0)\\
&-\int (v_t+\frac14 v) [3v(\stick_t(\xi) +S(t)\u_0)^2 + (\stick_t(\xi) +S(t)\u_0)^3] \\
&+\int(v_t+\frac14v)P_{>N}(v+\stick_t(\xi)+S(t)\u_0)^3.
\end{align*}
Therefore, using Lemma \ref{intbyparts}, Lemma \ref{lots_estimate}, Lemma \ref{lots2_estimate}, Lemma \ref{P>N=0}, 
Young's inequality and \eqref{F<E}, for some constant $C$ (that can change line by line) we have
\begin{align*}
\partial_t F \le &- \frac12 E\\
			&+E^{1-\frac\alpha8} (\norm{\u_0}_{X^\alpha} + \norm{\stick_t(\xi)}_{\C^\alpha})\\
			&+ E^\frac34 (\norm{\u_0}_{X^\alpha} + \norm{\stick_t(\xi)}_{\C^\alpha})^2 + E^\frac12 (\norm{\u_0}_{X^\alpha} + \norm{\stick_t(\xi)}_{\C^\alpha})^3\\
\le & -\frac12 E + \frac14 E \\
& +C\Big[(\norm{\u_0}_{X^\alpha} + \norm{\stick_t(\xi)}_{\C^\alpha})^\frac8\alpha +(\norm{\u_0}_{X^\alpha} + \norm{\stick_t(\xi)}_{\C^\alpha})^8 \\
&  \phantom{+C\Big[}+(\norm{\u_0}_{X^\alpha} + \norm{\stick_t(\xi)}_{\C^\alpha})^6\Big]\\
\le& -\frac12 E + C\Big(1 + \norm{\u_0}_{X^\alpha}^\frac8\alpha + \norm{\stick_t(\xi)}_{\C^\alpha}^\frac8\alpha\Big)\\
\le& -\frac25 F + \frac{27}{10} (\norm{\stick_t(\xi)}_{\C^\alpha}+\norm{\u_0}_{X^\alpha})^4 + C\Big(1 + \norm{\u_0}_{X^\alpha}^\frac8\alpha + \norm{\stick_t(\xi)}_{\C^\alpha}^\frac8\alpha\Big)\\
\le& -\frac25 F + C\Big(1 + \norm{\u_0}_{X^\alpha}^\frac8\alpha + \norm{\stick_t(\xi)}_{\C^\alpha}^\frac8\alpha\Big).
\end{align*}
Therefore, by Gronwall, if $c:=\frac25$, for some other constant $C$ we have
\begin{equation*}
F(\v(t)) \le e^{-ct}F(\v_0) + C\Big(1 + \norm{\u_0}_{X^\alpha}^\frac8\alpha + \int_0^t e^{-c(t-t')} \norm{\stick_{t'}(\xi)}_{\C^\alpha}^\frac8\alpha \d t'\Big).
\end{equation*}
Hence, using \eqref{E<F} and \eqref{F<E},
\begin{align*}
&E(\v(t)) \\
\lesssim& F(\v(t)) + (\norm{\stick_t(\xi)}_{\C^\alpha}+\norm{\u_0}_{X^\alpha})^4 \\
\lesssim& e^{-ct}F(\v(0)) + 1 + \norm{\u_0}_{X^\alpha}^\frac8\alpha + \int_0^t e^{-c(t-t')} \norm{\stick_{t'}(\xi)}_{\C^\alpha}^\frac8\alpha \d t' + \norm{\stick_t(\xi)}_{\C^\alpha}^4+\norm{\u_0}_{X^\alpha}^4\\
\lesssim& e^{-ct}(\norm{\u_0}_{X^\alpha}^4) + 1 + \norm{\u_0}_{X^\alpha}^\frac8\alpha + \norm{\stick_t(\xi)}_{\C^\alpha}^4 + \int_0^t e^{-c(t-t')} \norm{\stick_{t'}(\xi)}_{\C^\alpha}^\frac8\alpha \d t'\\
\lesssim& 1 + \norm{\u_0}_{X^\alpha}^\frac8\alpha + \norm{\Stick_{t}(\xi)}_{\C^\alpha}^4 + \int_0^t e^{-c(t-t')} \norm{\stick_{t'}(\xi)}_{\C^\alpha}^\frac8\alpha \d t'.
\end{align*}
\end{proof}
\section{Invariance}
The goal of this section is showing that the flow of \eqref{bvec} is a stochastic flow which satisfies the semigroup property, and proceed to prove that the measure $\rho$ is invariant for the flow of \eqref{bvec}. 
Even if we will not use it explicitly in the following, the semigroup property ensures that the maps on Borel functions
$$F \mapsto P_t F := \E[F(\Phi_t(\cdot;\xi))] $$
define a Markov semigroup, to which we can apply the usual theory for stationary measures.

Recall that, if $\u_0 \in X^ \alpha$, the flow 
of \eqref{bvec} at time $t$ with initial data $\u_0$ is defined as 
\begin{equation*}
\Phi_t(\u_0;\xi) = S(t) \u_0 + \stick_t(\xi) + \v(\u_0,\xi;t),
\end{equation*}
where $\v$ solves \eqref{veqn}.
\begin{Proposition}
The map $\Phi$ satisfies the semigroup property, i.e.\ for every $F$ measurable and bounded,
\begin{equation*}
\E[F(\Phi_{t+s}(\u_0;\xi))] = \E[F(\Phi_s(\Phi_t(\u_0;\xi_1);\xi_2))],
\end{equation*}
where $\xi_1,\xi_2$ are two independent copies of space-time white noise. 
\end{Proposition}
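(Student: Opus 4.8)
The plan is to first establish a \emph{pathwise} form of the identity and then pass to expectations using the independence structure of the noise. For $t>0$ let $\theta_t\xi := \xi(t+\cdot\,,\cdot)$ denote the time-shifted noise, which is again a space-time white noise and is independent of $\mathcal F_t := \sigma(\xi|_{[0,t]\times\T^3})$. From the defining integral of $\stick$ and the semigroup property $S(r)S(r') = S(r+r')$, a change of variables gives, for all $t,s>0$,
$$\stick_{t+s}(\xi) = S(s)\stick_t(\xi) + \stick_s(\theta_t\xi).$$
The statement I want to prove pathwise is
$$\Phi_{t+s}(\u_0;\xi) = \Phi_s\big(\Phi_t(\u_0;\xi);\theta_t\xi\big)\qquad\text{for a.e. }\xi.$$

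To prove this, I would revisit the computation \eqref{iterated}; applying it with the roles of $t$ and $s$ interchanged (i.e.\ restarting the flow at time $t$ and running it for time $s$), it reads
$$\v(t+s) = S(s)\v(t) + \int_0^s S(s-r)\vec{0}{\big(S(r)(S(t)\u_0) + \stick_{t+r}(\xi) + v(t+r)\big)^3}\,\d r.$$
Substituting the decomposition of $\stick_{t+r}(\xi)$ and setting $\tilde\u_0 := \Phi_t(\u_0;\xi) = S(t)\u_0 + \stick_t(\xi) + \v(t)$ and $\w(r) := \v(t+r) - S(r)\v(t)$, the term inside the nonlinearity becomes the first component of $S(r)\tilde\u_0 + \stick_r(\theta_t\xi) + \w(r)$, so that
$$\w(s) = \int_0^s S(s-r)\vec{0}{\big(S(r)\tilde\u_0 + \stick_r(\theta_t\xi) + w(r)\big)^3}\,\d r,$$
which is exactly \eqref{veqn} with initial datum $\tilde\u_0$ and driving noise $\theta_t\xi$ (that is, \eqref{iterable} with $\v_0 = \0$). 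Now $\tilde\u_0 \in X^\alpha$ a.s., since $S(t)\u_0 \in X^\alpha$, $\stick_t(\xi) \in X^\alpha$ by Proposition \ref{stickX}, and $\v(t) \in \H^2 \subset X^\alpha$ by Lemma \ref{H2intoX}; moreover $\w$ is defined and finite for all times by Corollary \ref{gwp}, so it cannot satisfy the blow-up alternative. Hence the uniqueness statement in Proposition \ref{lwp} forces $\w(\cdot) = \v(\tilde\u_0,\theta_t\xi;\cdot)$. Recombining,
$$\Phi_{t+s}(\u_0;\xi) = S(s)\tilde\u_0 + \stick_s(\theta_t\xi) + \v(\tilde\u_0,\theta_t\xi;s) = \Phi_s(\tilde\u_0;\theta_t\xi),$$
which is the desired pathwise identity.

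To conclude, define $G(\u) := \E[F(\Phi_s(\u;\xi))]$; using continuity of $\u \mapsto \Phi_s(\u;\xi)$ (Proposition \ref{continuity}) together with measurability of $\xi\mapsto\Phi_s(\u;\xi)$, the map $(\u,\xi)\mapsto F(\Phi_s(\u;\xi))$ is a bounded Carath\'eodory function, hence jointly measurable, so $G$ is bounded and Borel. Since $\Phi_t(\u_0;\xi) = S(t)\u_0 + \stick_t(\xi) + \v(t)$ is $\mathcal F_t$-measurable (it factors through $\xi|_{[0,t]}$, as $\stick_\cdot(\xi)|_{[0,t]}$ does and the Picard iterates defining $\v$ do), while $\theta_t\xi$ is a white noise independent of $\mathcal F_t$, the standard freezing lemma (condition on $\mathcal F_t$ and integrate out $\theta_t\xi$ via its regular conditional law) gives
$$\E[F(\Phi_{t+s}(\u_0;\xi))] = \E\big[F\big(\Phi_s(\Phi_t(\u_0;\xi);\theta_t\xi)\big)\big] = \E\big[G(\Phi_t(\u_0;\xi))\big].$$
On the other hand, by Fubini, $\E[F(\Phi_s(\Phi_t(\u_0;\xi_1);\xi_2))] = \E[G(\Phi_t(\u_0;\xi_1))] = \E[G(\Phi_t(\u_0;\xi))]$ for independent copies $\xi_1,\xi_2$, and comparing the two displays yields the proposition.

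The core computation is just \eqref{iterated} rearranged, so I expect the only delicate points to be the probabilistic bookkeeping — verifying that $\theta_t\xi$ is an independent white noise, that $\Phi_t(\u_0;\cdot)$ is genuinely $\mathcal F_t$-measurable, and that the joint-measurability hypotheses needed to apply the freezing lemma hold — rather than anything analytically hard.
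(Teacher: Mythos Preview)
Your proof is correct and follows essentially the same route as the paper: both establish the pathwise flow identity via the computation \eqref{iterated}, then pass to expectations. The only difference is in the probabilistic wrapping: you split a single noise $\xi$ into its past $\xi|_{[0,t]}$ and its shifted future $\theta_t\xi$ and invoke the freezing lemma, whereas the paper does the dual construction---gluing two independent noises $\xi_1,\xi_2$ into a single $\tilde\xi$ (equal to $\xi_1$ on $[0,t]$ and to $\xi_2(\cdot-t)$ afterwards), checking that $\tilde\xi$ is again a space-time white noise, and then using $\Phi_s(\Phi_t(\u_0;\xi_1);\xi_2)=\Phi_{t+s}(\u_0;\tilde\xi)$ directly. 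The paper's version sidesteps the measurability and conditional-expectation bookkeeping you flag at the end, but the two arguments are otherwise the same.
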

\begin{proof}
Given $\xi_1,\xi_2$ independent copies of the white noise, let $\tilde \xi$ be defined by 
$$\langle{\tilde \xi},{\phi}\rangle := \dual{\1_{[0,t]}\xi_1}{\phi} + \dual{\1_{(t,+\infty)} \xi_2(\cdot - t)}{\phi}$$
for every test function $\phi$.
It is easy to see that $\tilde \xi$ satisfies the universal property
$$\E[|\langle{\tilde \xi},{f}\rangle|^2] = \norm{f}_{L^2}^2,$$ 
so it is a copy of space-time white noise. Moreover, from a direct computation, 
$$\Phi_s(\Phi_t(\u_0;\xi_1),\xi_2) = \Phi_{t+s}(\u_0;\tilde \xi), $$
and so for every $F$ measurable and bounded,
$$\E[F(\Phi_{t+s}(\u_0;\xi))] = \E[F(\Phi_s(\Phi_t(\u_0;\xi_1);\xi_2))].$$

\end{proof}

\begin{Proposition}\label{invarianceN}
Consider the flow given by 
\begin{equation}\label{flowN}
\Phi^N_t(\u_0;\xi) := S(t) \u_0 + \stick_t(\xi) + \v_N(\u_0;\xi),
\end{equation}
where $\v_N$ solves \eqref{veqnN}. Then the measure 
\begin{equation}\label{rhoN}
\d \rho_N(\u) :=\frac{1}{Z_N} \exp\Big( - \frac 14 \int (P_{\le N}u)^4 \Big) \d \mu (\u)
\end{equation}
is invariant for the the process associated to the flow $\Phi^N_t(\cdot;\xi)$, where $Z_N = \int  \exp\Big( - \frac 14 \int (P_{\le N}u)^4 \Big) \d \mu (\u)$ (so that $\rho_N$ is a probability measure).
\end{Proposition}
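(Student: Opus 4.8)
The plan is to split the truncated equation \eqref{bvecN} into its Hamiltonian part and its Ornstein–Uhlenbeck part, exactly as in the heuristic decomposition in the introduction, and to check invariance of $\rho_N$ under each part separately by using the structure of the truncation. First I would observe that, because $P_{\le N}$ is a sharp Fourier projection, the system \eqref{bvecN} decouples: writing $\u = (P_{\le N}\u, P_{>N}\u)$, the low-frequency part $P_{\le N}\u$ solves a \emph{finite-dimensional} SDE (a damped, forced Hamiltonian system in the $2\cdot\#\{|n|_\infty\le N\}$ real variables $(\hat u(n),\hat u_t(n))$), while the high-frequency part $P_{>N}\u$ solves the \emph{linear} damped beam equation with white-noise forcing, with no coupling to the low modes (the nonlinearity $P_{\le N}(P_{\le N}u)^3$ lives entirely in the low modes). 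The measure $\rho_N$ factors accordingly as a product of a finite-dimensional measure on the low modes — with density proportional to $\exp(-\tfrac14\int(P_{\le N}u)^4)$ against the Gaussian $\mu_{\le N}$ — and the Gaussian $\mu_{>N}$ on the high modes. So it suffices to prove invariance for each factor.

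For the high-frequency (linear Gaussian) factor, invariance of $\mu_{>N}$ is classical: the high-mode dynamics is an infinite-dimensional Ornstein–Uhlenbeck process whose covariance operator is chosen precisely so that $\mu_{>N}$ is its stationary distribution; alternatively, mode by mode it reduces to a finite collection of two-dimensional linear SDEs, and one checks that the Gaussian with covariance $\mathcal L$ (as in \eqref{eq:wiener}, restricted to $P_{>N}$) is invariant — e.g.\ by verifying the stationary Fokker–Planck (Liouville) equation, or by the explicit formula for the law of a linear SDE and the contraction estimate $\norm{S(t)\u}_{\H^\alpha}\lesssim e^{-t/2}\norm{\u}_{\H^\alpha}$. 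For the low-frequency factor, I would apply the standard finite-dimensional argument: the generator of the low-mode SDE is $\mathcal{L}_N = \mathcal{L}_N^{\mathrm{Ham}} + \mathcal{L}_N^{\mathrm{OU}}$, and one checks $\int \mathcal{L}_N F\, d\rho_N^{\le N} = 0$ for a core of test functions $F$. The Hamiltonian vector field is divergence-free and preserves the energy $H$ appearing in the density, so $\int \mathcal{L}_N^{\mathrm{Ham}} F\,\exp(-H)\,dx = 0$ by integration by parts (this is where the specific form of the Gibbs weight $\exp(-\tfrac14\int u^4 - \tfrac12\int u^2 + |\Delta u|^2)\exp(-\tfrac12\int u_t^2)$ is used); the damping–plus–noise part in the $u_t$ variables is an Ornstein–Uhlenbeck generator for which the Gaussian $\exp(-\tfrac12\int u_t^2)$-factor is stationary, again by integration by parts. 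Combining, $\rho_N^{\le N}$ is stationary for $\mathcal{L}_N$, hence $\rho_N$ is invariant for $\Phi^N_t$.

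Alternatively — and this is probably the cleaner route to write up given what has been established — I would avoid generators and argue directly via the Duhamel/fixed-point description \eqref{veqnN} and \eqref{flowN}, using a time-discretisation: invariance under $\Phi^N_t$ for all $t$ follows from invariance under the two building-block flows together with a Trotter–Kato / splitting approximation, where at each step one alternates the Hamiltonian flow (which preserves $\exp(-H)$, by Liouville's theorem in finite dimensions on the low modes and trivially on the high modes since it does not move them) and the linear OU flow (which preserves $\mu$, hence preserves $\rho_N = Z_N^{-1}\exp(-\tfrac14\int(P_{\le N}u)^4)\mu$ because the OU flow does not change $P_{\le N}u$ — it only acts on $u_t$, and separately relaxes the high modes, leaving the density $\exp(-\tfrac14\int(P_{\le N}u)^4)$ unchanged). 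One then passes to the limit in the splitting parameter using the continuity of the flow (Proposition \ref{continuity}) and dominated convergence, with the a priori bounds from Corollary \ref{gwp} and the $L^\infty(\mu)$ bound $\norm{F_N}_{L^\infty(\mu)}\le 1$ from Proposition \ref{convergence} providing the needed uniform integrability.

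The main obstacle I anticipate is the bookkeeping in the splitting/Trotter argument: one must show that the composition of the two block flows, applied with mesh $\delta$ over $[0,t]$, converges to $\Phi^N_t$ as $\delta\to 0$ strongly enough (say in $C([0,t];\H^2)$ in probability, or $\P$-a.s.\ along a subsequence) to transfer invariance, and this requires a stability estimate for \eqref{veqnN} under perturbations of the driving terms — essentially a quantitative version of Proposition \ref{lwp} / Proposition \ref{continuity} uniform on bounded sets. If instead one uses the generator route, the main obstacle is purely the justification that the relevant test functions form a core and that all integrations by parts are licensed — routine in finite dimensions for the low modes, and standard for the Gaussian high modes, but it must be stated carefully because the full state space is infinite-dimensional even though the nonlinear interaction is finite-dimensional. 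Either way, the conceptual content is entirely in the observation that the sharp truncation $P_{\le N}$ makes the nonlinearity finite-dimensional and decoupled from the Gaussian tail.
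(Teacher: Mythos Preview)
Your main approach --- splitting at level $N$ into a finite-dimensional low-mode SDE coupled to an independent infinite-dimensional linear Ornstein--Uhlenbeck process on high modes, and checking invariance for each factor --- is correct, but it is not the route the paper takes. The paper instead introduces a \emph{second} truncation parameter $M\ge N$, applying $P_{\le M}$ to both the initial data and the noise; this makes the entire system a finite-dimensional SDE with smooth coefficients, and invariance of the corresponding finite-dimensional measure $\tilde\rho$ is verified by a single stationary Fokker--Planck computation. One then sends $M\to\infty$ using the continuity of the flow (Proposition~\ref{continuity}) and dominated convergence. Your route avoids this auxiliary limit at the cost of a separate (standard) verification that $\mu_{>N}$ is stationary for the linear dynamics on high modes; the paper's route trades that for a finite-dimensional approximation argument. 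Both are valid; yours is more transparent about the decoupling induced by the sharp projector, while the paper's is more uniform in reducing everything to one finite-dimensional check plus one limit.

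One small correction to your Trotter alternative: the Hamiltonian flow \emph{does} move the high modes, via the linear part $(1+\Delta^2)$, so invariance there is not ``trivial because it does not move them''; rather, the restriction to high modes is a linear Hamiltonian flow preserving the quadratic energy, hence the Gaussian $\mu_{>N}$. This does not affect your primary argument.
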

\begin{proof}
Let $\mathbf X$ be a random variable with law $\mu$, independent from $\xi$. Invariance of \eqref{rhoN} is equivalent to showing that 
$$\E\Big[F(\Phi^N_t(\mathbf X;\xi))\exp\Big( -\frac 14 \int (P_{\le N}\pi_1\mathbf X)^4 \Big)\Big] = \E\Big[F(\mathbf X)\exp\Big( -\frac 14 \int (P_{\le N}\pi_1 \mathbf X)^4 \Big)\Big]  $$
for every $F: X^\alpha \to \R$ continuous. Let $M \ge N$. By definition of $X^\alpha$, we have that 
$\lim_{M \to \infty} \norm{\u- P_M \u}_{X^\alpha} = 0$ for every $u \in X^{\alpha'}$, $\alpha' > \alpha$. Therefore, 
by Proposition \ref{stickX} and Proposition \ref{continuity}, one has that for every $t\ge 0$, 
$$ \lim_{M\to \infty} \norm{ \Phi^N_t (P_M\mathbf X;P_M\xi) - \Phi_t^N (\mathbf X;\xi)}_{X^\alpha} = 0.$$
Therefore, by dominated convergence, it is enough to prove that 
\begin{equation}\label{invarianceNM}
\begin{aligned}
&\E\Big[F(\Phi^N_t(P_M\mathbf X;P_M\xi))\exp\Big( -\frac 14 \int (P_{\le N}\pi_1 \mathbf X)^4 \Big)\Big] \\
=& \E\Big[F(P_{\le M}\mathbf X)\exp\Big( -\frac 14 \int (P_{\le N}\pi_1 \mathbf X)^4 \Big)\Big].  
\end{aligned}
\end{equation}
By \eqref{flowN}, it is easy to check that $\mathbf Y= (Y,Y_t)^T:= \Phi^N_t(\cdot;P_M\xi)$ solves the SDE
$$ \d \mathbf Y = \begin{pmatrix}
0 & 1 \\
-(1+\Delta^2) & -1
\end{pmatrix}
\mathbf Y - P_{\le N}\vec{0}{(P_{\le N} Y)^3}+ \vec{0}{\sqrt2 \d W_M}, $$
where $\d W_M:= P_M \xi$ is a space-time white noise on the finite dimensional space given by the image of the map $P_M$. 
Therefore, if we show that the measure $\tilde \rho$ defined on the image of $P_M$, 
$$\d \tilde \rho(u):= \exp\Big(-\frac 14 \int (P_{\le N} u)^4 - \frac12 \int |u|^2 - \frac12 \int |\Delta u|^2 - \frac12 \int |u_t|^2\Big) \d u \d u_t, $$
is invariant for the flow $\mathbf Y$, we get \eqref{invarianceNM}. Since $\mathbf Y$ solves an SDE with smooth coefficients, this is true if and only if $\tilde \rho$ solves 
the Fokker-Planck equation 
\begin{equation*}\label{Fokker-Planck}
-\div\left[\Big(\begin{pmatrix}
 0 & 1
 \\-(1+\Delta^2) & -1
\end{pmatrix} \vec{u}{u_t} - \vec{0}{u^3}\Big)\tilde \rho(u,u_t)\right] = 0,
\end{equation*}
which can be shown through a direct computation.
\end{proof}

\begin{Corollary}
The measure $\rho$ is invariant by the flow of \eqref{bvec}.
\end{Corollary}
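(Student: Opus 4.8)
The plan is to deduce invariance of $\rho$ from invariance of the truncated measures $\rho_N$ (Proposition \ref{invarianceN}) by letting $N\to\infty$, using that the truncated flows $\Phi^N_t$ converge to $\Phi_t$ (Corollary \ref{gwp}) and that the density of $\rho_N$ with respect to $\mu$ converges to that of $\rho$ (Proposition \ref{convergence}). Fix a bounded continuous $F\colon X^\alpha\to\R$. Rewriting the identity obtained in the proof of Proposition \ref{invarianceN} as an iterated integral over the independent pair (initial datum, noise) and dividing by $Z_N$, invariance of $\rho_N$ under $\Phi^N_t$ is equivalent to
\[
\frac1{Z_N}\int \E\big[F(\Phi^N_t(\u;\xi))\big]\,e^{-\frac14\int(P_{\le N}u)^4}\,\d\mu(\u)=\frac1{Z_N}\int F(\u)\,e^{-\frac14\int(P_{\le N}u)^4}\,\d\mu(\u).
\]
I would then show that both sides converge to the corresponding expression with $P_{\le N}u$ replaced by $u$; since $Z_N\to Z$ by Proposition \ref{convergence}, this yields \eqref{defInvariance} for bounded continuous $F$.

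The right-hand side is immediate from Proposition \ref{convergence}: the densities are bounded by $1$ and converge to $e^{-\frac14\int u^4}$ in $L^1(\mu)$ while $F$ is bounded, hence $\int F\,e^{-\frac14\int(P_{\le N}u)^4}\d\mu\to\int F\,e^{-\frac14\int u^4}\d\mu$. For the left-hand side, set $g_N(\u):=\E[F(\Phi^N_t(\u;\xi))]$, so $\norm{g_N}_{L^\infty(\mu)}\le\norm{F}_{L^\infty}$. For $\mu$-a.e.\ $\u$ one has $\u\in X^\alpha$, and then by Corollary \ref{gwp} the flow $\Phi_t(\u;\cdot)$ is globally defined and $\v_N(\u,\xi;t)\to\v(\u,\xi;t)$ in $\H^2$ for $\P$-a.e.\ $\xi$; since $\H^2\hookrightarrow X^\alpha$ by Lemma \ref{H2intoX} and $S(t)\u+\stick_t(\xi)$ is common to both flows, this gives $\Phi^N_t(\u;\xi)\to\Phi_t(\u;\xi)$ in $X^\alpha$ for $\P$-a.e.\ $\xi$. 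Continuity and boundedness of $F$ then yield $g_N(\u)\to g(\u):=\E[F(\Phi_t(\u;\xi))]$ by dominated convergence in $\xi$, for $\mu$-a.e.\ $\u$. Finally, writing $g_Nh_N-gh=(g_N-g)h_N+g(h_N-h)$ with $h_N:=e^{-\frac14\int(P_{\le N}u)^4}$, $h:=e^{-\frac14\int u^4}$, and using $0\le h_N\le1$, $g_N\to g$ $\mu$-a.e.\ with the uniform bound $\norm{F}_{L^\infty}$, and $h_N\to h$ in $L^1(\mu)$, dominated convergence gives $\int g_Nh_N\,\d\mu\to\int gh\,\d\mu$. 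This establishes \eqref{defInvariance} for every bounded continuous $F$.

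To upgrade to bounded measurable $F$, I would observe that $A\mapsto\int\P[\Phi_t(\u;\xi)\in A]\,\d\rho(\u)$ — a Borel probability measure on $X^\alpha$, the integrand being measurable in $\u$ by continuity of the flow (Proposition \ref{continuity}) — and $\rho$ agree against all bounded continuous functions on the metric space $X^\alpha$, hence coincide; this is precisely \eqref{defInvariance}. The only genuinely delicate point is to run the two limiting procedures — over the noise $\xi$ and over the initial datum — simultaneously, which is handled by Fubini together with the fact that Corollary \ref{gwp} supplies $\v_N\to\v$ a.s.\ for each fixed initial datum in $X^\alpha$; all the substantial analysis (global existence and the energy/stability estimate) has already been carried out, so this final step is a soft limiting argument.
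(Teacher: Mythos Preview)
Your proposal is correct and follows essentially the same route as the paper: pass to the limit $N\to\infty$ in the invariance identity for $\rho_N$ under $\Phi^N_t$, using $\Phi^N_t(\u;\xi)\to\Phi_t(\u;\xi)$ in $X^\alpha$ a.s.\ (from Corollary \ref{gwp} and the embedding $\H^2\hookrightarrow X^\alpha$) together with the $L^1(\mu)$ convergence of the densities (Proposition \ref{convergence}), and conclude by dominated convergence. Your version is in fact more explicit than the paper's --- you spell out the splitting $g_Nh_N-gh=(g_N-g)h_N+g(h_N-h)$ and add the routine upgrade from continuous to measurable bounded $F$, which the paper omits.
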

\begin{proof}
By Corollary \ref{gwp}, one has that for every $t>0$ and every $\u_0 \in X^\alpha$, $\Phi^N_t(\u_0;\xi) \to \Phi_t(\u_0;\xi)$ in $X^\alpha$ a.s.
Let $F:X^\alpha \to \R$ be continuous and bounded. By dominated convergence and Proposition \ref{invarianceN}, we have 
\begin{align*}
&\int \E\Big[F(\Phi_t(\u_0;\xi))\Big]\d \rho(\u_0) \\
=& \int \E\Big[F(\Phi_t(\u_0;\xi))\Big]\exp\Big( -\frac 14 \int (u_0)^4 \Big) \d\mu(\u_0) \\
=& \lim_{N \to \infty} \int \E\Big[F(\Phi^N_t(\u_0;\xi)) \Big] \exp\Big( -\frac 14 \int (P_{\le N}\pi_1\u_0)^4\Big) \d\mu(\u_0)\\
=& \lim_{N \to \infty} \int F(\u_0) \exp\Big( -\frac 14 \int (P_{\le N}\pi_1\u_0)^4\Big) \d\mu(\u_0)\\
=& \int F(\u_0) \exp\Big( -\frac 14 \int (\pi_1\u_0)^4\Big) \d\mu(\u_0) \\
=& \int F(\u_0) \d \rho(\u_0).
\end{align*}

\end{proof}

\section{Ergodicity}

In this section, we proceed to show unique ergodicity for the flow $\Phi_t(\u_0;\xi)$ of \eqref{bvec}. 
We recall that, as discussed in Section 1, the flow is naturally split as $\Phi_t(\u_0;\xi) = \stick_t(\xi) + S(t) \u_0 + \v$, where
$\v = \v(\u_0,\xi;t)$ solves \eqref{veqn}.

As discussed in the introduction, the flow of \eqref{bvec} does \emph{not} satisfy the strong Feller property, so more ``standard" techniques are not applicable. Indeed, by taking a set $E_t\subset X^\alpha$ such that $\P(\{\stick_t(\xi) \in E_t\})=1$, we can see that 
$$\P(\Phi_t(\0;\xi) \in E_t + \H^2) = \P(\stick_t + \v(\0,\xi;t) \in E_t + \H^2) = \P(\stick_t \in E_t + \H^2) = 1.$$
Taking $0<\alpha< \alpha_1 < \frac12$, let $\bar\u_0 \in X^\alpha\setminus \H^{\alpha_1}$, whose existence is guaranteed by Lemma \ref{XalphaNotH2}. We have that $S(t)\bar\u_0 \not\in \H^{\alpha_1}$ for every $t$\footnote{Since $S(t)$ in invertible in $\H^{\alpha_1}$.}, and so for every $\lambda \neq 0$,
$$\P(\Phi_t(\lambda\bar\u_0;\xi) \in E_t + \H^2) = \P(\stick_t(\xi) + \lambda S(t)\bar u_0 \in E_t + \H^2).$$
By taking $E_t\subseteq \H^{\alpha_1}$, (as allowed by Proposition \ref{stickX}), we have that this probability 
is bounded from above by 
$$\P(\stick_t(\xi) + \lambda S(t)\bar u_0 \in \H^{\alpha_1}) = \P(S(t)\bar u_0 \in \H^{\alpha_1}) = 0.$$
Therefore, the function
$$\Psi(\u):= \E[\1_{\{E_t + \H^2\}}(\Phi_t(\u,\xi))]$$
satisfies $\Psi(\0) = 1$ and $\Psi(\lambda \bar\u_0) = 0$ for $\lambda \neq 0$, therefore is not continuous in $\0$. With the same 
argument, we can see that $\Psi(\H^2) = \{1\}$ and $\Psi(X^\alpha\setminus \H^{\alpha_1}) = \{0\}$, and since
both sets are dense in $X^\alpha$, we have that $\Psi$ is not continuous \emph{anywhere}.

\subsection{Restricted Strong Feller  property and irreducibility of the flow}
In this subsection, we try to recover some weaker version of the strong Feller property for the flow $\Phi$. The
end result will be to prove the following lemma, which will be crucial for the proof of ergodicity:
\begin{Lemma} \label{supports}
Let $\nu_1,\nu_2$ be two invariant measures (in the sense of \eqref{defInvariance}) such that $\nu_1\perp\nu_2$.
Then there exists some $V \subset X^\alpha$ such that $\nu_1(V)=1$ and $\nu_2(V + \H^2) = 0$.
\end{Lemma}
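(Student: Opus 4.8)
The plan is to establish a "restricted strong Feller" property for $\Phi$ — strong Feller only with respect to perturbations of the initial data lying in the smooth space $\H^2$ — and then combine it with irreducibility to conclude. The key observation is that although $\Phi_t(\u_0;\xi) = S(t)\u_0 + \stick_t(\xi) + \v(\u_0,\xi;t)$ is badly behaved under general perturbations of $\u_0$ (the roughness of $S(t)\u_0$ is visible in the output), a perturbation $\u_0 \mapsto \u_0 + \h$ with $\h \in \H^2$ only changes the output by $S(t)\h \in \H^2$ plus a correction in $\v$, which by Proposition \ref{continuity} depends continuously on the data. Crucially, the \emph{singular} part $\stick_t(\xi)$ and the rough part $S(t)\u_0$ are unchanged, so the Bismut–Elworthy–Li / Da Prato–Debussche machinery as in \cite{tw17,hm17} can be run "modulo $\H^2$": one should be able to show that for $F$ bounded measurable, the map $\h \mapsto \E[F(\Phi_t(\u_0 + \h;\xi))]$ is continuous (indeed Lipschitz, with a constant blowing up as $t \to 0$) on $\H^2$. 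The mechanism is that the noise enters through $u_t$, which after one application of $S(t-t')$ controls the $\H^2$-directions, so one can use the Girsanov/integration-by-parts in those directions only.

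From this restricted strong Feller property one argues as follows. Suppose $\nu_1 \perp \nu_2$ are both invariant. By the restricted strong Feller property, for each fixed $t$ the function $\Psi(\u) := \E[\1_{A}(\Phi_t(\u;\xi))]$ is, for any Borel $A$, constant along $\H^2$-cosets up to sets that we can control — more precisely, if $\u_0 - \u_0' \in \H^2$ then $|\Psi(\u_0) - \Psi(\u_0')|$ is small, so $\Psi$ descends to a function on $X^\alpha/\H^2$ that is "continuous" in the right sense. Combining with \emph{irreducibility} of the flow (which one proves separately: starting from any $\u_0 \in X^\alpha$, with positive probability $\Phi_t(\u_0;\xi)$ enters any given open set; this follows from the structure of the stochastic convolution and a support theorem argument), one gets the usual dichotomy: two invariant measures are either equal or mutually singular, \emph{and} their supports (modulo $\H^2$) are separated. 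Concretely, using that $\nu_1 \perp \nu_2$, pick disjoint Borel sets $V_1, V_2$ with $\nu_i(V_i) = 1$; the restricted strong Feller property forces $V_1$ to be "saturated modulo $\H^2$" in the sense that one can replace it by $V := \{\u : \E[\1_{V_1}(\Phi_t(\u;\xi))] > 1/2\}$ or a similar level set, which satisfies $\nu_1(V) = 1$ (by invariance and since the orbit stays in $\mathrm{supp}\,\nu_1$) while $V + \H^2$ still has $\nu_2$-measure zero (since $\nu_2$ lives on $V_2$, and the restricted strong Feller property prevents the $\Phi_t$-image of $V_2$-points from concentrating on $V_1 + \H^2$).

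The main obstacle — and where most of the work lies — is proving the restricted strong Feller estimate itself: one must show that the Bismut–Elworthy–Li formula can be closed \emph{only} using the $\H^2$-valued directions of the noise, despite the nonlinearity $u^3$ mixing regularities and despite the flow map $\v$ being defined by a fixed point in $\H^2$ rather than by a smooth finite-dimensional SDE. The delicate points are: (i) differentiating $\Phi_t$ in the $\H^2$-direction of the initial data and controlling the derivative via the variational equation (using the $\H^2$ energy estimate from Proposition \ref{energy_estimate} and Corollary \ref{gwp} to control $\norm{\v}_{\H^2}$ along the orbit); (ii) showing the Malliavin derivative of $\Phi_t$ in the noise directions spans enough of $\H^2$ to invert, with the inverse bounded by quantities that are integrable against $\rho$ (or at least a.s. finite, since we deliberately avoid long-time control); and (iii) handling the short-time singularity $t \to 0$, which should only cost an integrable power of $t$. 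A secondary obstacle is that the irreducibility statement, while morally clear, requires a genuine support-theorem-type argument for this hyperbolic equation, transferring the (obvious) irreducibility of the linear stochastic convolution through the continuity of the Da Prato–Debussche solution map $\v$.
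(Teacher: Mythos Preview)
Your overall architecture --- restricted strong Feller in the $\H^2$-directions, then combine with an irreducibility-type statement --- is exactly the paper's. Two points of divergence, one harmless and one a real gap.

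\textbf{Harmless difference.} You propose Bismut--Elworthy--Li / Malliavin calculus for the restricted strong Feller estimate. The paper instead runs a direct Girsanov argument: it first proves an ``anticonvolution'' lemma stating that $\stick_t:L^2([0,t];L^2)\to\H^2$ admits a bounded right inverse $T_t$, then writes
\[
\Phi_t(\u+\v_0;\xi)=S(t)\u+\stick_t\big(\xi + T_tS(t)\v_0 + \Tt\v(\u+\v_0,\xi)\big)
\]
and shifts the drift into the noise. This bypasses your obstacles (i)--(iii) entirely: no derivative of the flow, no Malliavin inverse, no short-time blow-up to manage. Your BEL route should also close, but it is heavier than necessary.

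\textbf{Genuine gap.} Your deduction of $\nu_2(V+\H^2)=0$ does not work as written. From restricted strong Feller you correctly obtain $V=\{\Psi>\tfrac12\}$ with $\Psi(\u)=\E[\1_{V_1}(\Phi_t(\u;\xi))]$, giving $V$ \emph{open in the $\X$-topology}, $\nu_1(V)=1$, $\nu_2(V)=0$. But ``open in $\X$'' only means $V$ is stable under \emph{small} $\H^2$-perturbations, not that $V=V+\H^2$; and your parenthetical (``the restricted strong Feller property prevents\ldots'') is not an argument. Your proposed irreducibility --- $\Phi_t(\u_0;\xi)$ hits every $X^\alpha$-open set with positive probability --- even if true, lives in the wrong topology to combine with $\X$-strong Feller in the standard way (strong Feller plus irreducibility gives disjoint supports only when both are in the same topology, and here $\X$ is disconnected and non-separable).

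What the paper actually uses is a different irreducibility statement: for any invariant $\nu$, if $\nu(E)=0$ then $\nu(E+\w)=0$ for every $\w\in\H^2$ (proved by another Girsanov shift, absorbing $-T_t\w$ into the noise). Given this, the conclusion follows in one line: take a countable dense $\{\w_n\}\subset\H^2$; since $V$ is $\X$-open, $V+\H^2=\bigcup_n(V+\w_n)$, and each summand has $\nu_2$-measure zero. This translation-invariance-of-null-sets lemma is the missing ingredient in your plan.
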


In order to prove this, it is convenient to introduce the space $\X = X^\alpha$ equipped with the distance
\begin{equation*}
d(\u_0,\u_1) = \norm{\u_0 - \u_1}_{\H^2} \wedge 1. 
\end{equation*}
While $\X$ is a complete metric space and a vector space, it does not satisfy many of the usual hypotheses on ambient spaces:
it is \emph{not} a topological vector space, it is \emph{disconnected}, and it is \emph{not} separable. Moreover,
the sigma-algebra $\mathscr B$ of the Borel sets on $X^\alpha$, which is also the sigma-algebra we equip $\X$
with, does \emph{not} coincide with the Borel sigma-algebra of $\X$ - $\mathscr B$ is strictly smaller\footnote{Take $\u_0 \in X^\alpha\setminus \H^2$, and let $E\subseteq \R$ be not Borel. Then it is easy to see that $E\u_0:=\{\lambda\u_0|\lambda \in E\}$ is \emph{not} in $\mathscr B$, but it is \emph{closed} in $\X$.}. 
However, in this topology, we can prove the strong Feller property.
\begin{Proposition}[Restricted strong Feller property] \label{strong_feller}
The process associated to the flow $\Phi_t(\cdot;\xi)$ of \eqref{bvec} defined on $\X$ has the strong Feller property, i.e.\ for every $t>0$, the function
\begin{equation*}
P_tG(\u) := \E[G(\Phi_t(\u,\xi))]
\end{equation*}
is continuous as a function $\X\to \R$ for every $G:\X\to \R$ measurable and bounded.
\end{Proposition}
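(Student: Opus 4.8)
The plan is to prove a Bismut--Elworthy--Li type gradient estimate for $P_t$ along directions in $\H^2$, following the parabolic works \cite{tw17,hm17}. Fix $t>0$, $G\colon\X\to\R$ bounded and measurable, and $\u_0\in X^\alpha$. Since $d(\u_0,\u_0')=\norm{\u_0-\u_0'}_{\H^2}\wedge1$, the connected component of $\u_0$ in $\X$ is exactly $\u_0+\H^2$, so continuity of $P_tG$ at $\u_0$ is the statement that $P_tG(\u_0+\h)\to P_tG(\u_0)$ as $\norm{\h}_{\H^2}\to0$. By the fundamental theorem of calculus it is enough to bound the directional derivatives $D_\h P_tG(\u_0')$ for $\u_0'$ in the unit $\H^2$-ball $B$ around $\u_0$ --- on which $\norm{\u_0'}_{X^\alpha}$ stays bounded, by Lemma \ref{H2intoX} --- by $\|G\|_\infty\,\norm{\h}_{\H^2}$ times a prefactor, uniformly in $\u_0'\in B$.

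The first ingredient is the differentiability of the flow in the initial datum along $\H^2$: by the implicit function theorem applied to the fixed point \eqref{veqn} (the cubic being smooth from $C_t\H^2$ to $C_tL^2$, using $H^2\into L^\infty$ and $\stick_t\in\C^\alpha$), $\w:=D_\h\Phi_t(\u_0;\xi)$ solves the linearised equation $\partial_t\w=-\bigl(\begin{smallmatrix}0&-1\\1+\Delta^2&1\end{smallmatrix}\bigr)\w-\vec{0}{3u^2\pi_1\w}$, $\w(0)=\h$, with $u=\pi_1\Phi_t(\u_0;\xi)$. Using $\norm{S(\tau)}_{\H^2\to\H^2}\les e^{-\tau/2}$, $\norm{u(\tau)}_{L^\infty}\les e^{-\tau/8}\norm{\u_0}_{X^\alpha}+\norm{\stick_\tau}_{\C^\alpha}+\norm{\v(\tau)}_{\H^2}$, Corollary \ref{gwp} and Gronwall, one gets $\sup_{s\le t}\norm{\w(s)}_{\H^2}\le\norm{\h}_{\H^2}e^{C\Lambda_t}$, where $\Lambda_t:=t+\norm{\u_0}_{X^\alpha}^{8/\alpha}+\int_0^t\norm{\stick_r}_{\C^\alpha}^{8/\alpha}\,\d r$. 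The second ingredient is a control realising this perturbation through the noise. Because $\sqrt2\,\d W$ in \eqref{bvec} enters only the velocity slot ($W$ cylindrical on $L^2(\T^3)$), the Cameron--Martin shifts available to us are the forcings $\vec{0}{\phi}$ with $\phi\in L^2([0,t]\times\T^3)$, and the crucial structural fact is that the linearised flow is exactly controllable in the energy space $\H^2$ by such forcings in arbitrarily short time. This reduces to an observability bound for $S(t)$, which is an elementary mode-by-mode computation: $\pi_2S(\tau)^\ast\vec{g_1}{g_2}(n)=e^{-\tau/2}\bigl(\tfrac{\sin\lambda_n\tau}{\lambda_n}g_1(n)+(\cos\lambda_n\tau-\tfrac{\sin\lambda_n\tau}{2\lambda_n})g_2(n)\bigr)$ with $\lambda_n=\sqrt{\tfrac34+|n|^4}$, whence $\int_0^t\norm{\pi_2S(\tau)^\ast\g}_{L^2}^2\,\d\tau\gtrsim_t\norm{\g}_{H^{-2}\times L^2}^2$. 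Building a control $\phi$ (for instance via the controllability Gramian of the linearised flow; $\phi$ will in general be anticipating) so that the solution $\mathbf z$ of the linearised equation forced by $\vec{0}{\phi}$ with $\mathbf z(0)=\0$ satisfies $\mathbf z(t)=\w(t)$, the energy estimate and Gronwall give $\norm{\phi}_{L^2([0,t]\times\T^3)}\les_t\norm{\h}_{\H^2}e^{C\Lambda_t}$. Since shifting $W$ by $\int_0^\cdot\tfrac1{\sqrt2}\phi_r\,\d r$ moves $\Phi_t(\u_0;\xi)$ by $\mathbf z(t)$ to first order, the Bismut--Elworthy--Li formula yields $D_\h P_tG(\u_0)=\E[G(\Phi_t(\u_0;\xi))\,\mathcal I_\phi]$ for a stochastic integral $\mathcal I_\phi$ of $\phi$ against the noise, and an isometry estimate gives $|D_\h P_tG(\u_0)|\les_t\|G\|_\infty\,\norm{\h}_{\H^2}\,(\E\,e^{C\Lambda_t})^{1/2}$.

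The main obstacle is that $\E\,e^{C\Lambda_t}=+\infty$: $\norm{\stick_r}_{\C^\alpha}$ is the supremum of a Gaussian field, hence sub-Gaussian by Borell's inequality (cf.\ the proof of Proposition \ref{stickreg}), so $\norm{\stick_r}_{\C^\alpha}^{8/\alpha}$ has only stretched-exponential tails and no positive exponential moment. I would remove this by localisation, as in \cite{hm17}: for $K>0$ let $\Phi^{(K)}_t$ be the flow obtained by multiplying the cubic in \eqref{bvec} (equivalently in \eqref{veqn}) by a smooth cutoff $\theta_K\bigl(\norm{\v(t)}_{\H^2}+\norm{\stick_t}_{\C^\alpha}\bigr)$, $\theta_K\equiv1$ on $[0,K]$ and $\supp\theta_K\subseteq[0,2K]$. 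For $\Phi^{(K)}$ the nonlinearity is globally bounded in $L^2$, and the whole computation above runs with every random prefactor (including the Malliavin derivatives of the control entering the isometry) replaced by a deterministic $e^{C_Kt}$ depending only on $K$ and $\sup_{\u_0'\in B}\norm{\u_0'}_{X^\alpha}$; hence $P^{(K)}_tG$ is $\H^2$-Lipschitz on $B$, in particular continuous $\X\to\R$. On the other hand $\Phi^{(K)}_t(\u_0';\xi)=\Phi_t(\u_0';\xi)$ on the event $\{\sup_{s\le t}(\norm{\v(s;\u_0',\xi)}_{\H^2}+\norm{\stick_s}_{\C^\alpha})<K\}$ (by uniqueness, stopping at the exit time), and by Proposition \ref{stickreg}, Proposition \ref{stickX} and Corollary \ref{gwp} the complementary probability tends to $0$ as $K\to\infty$, uniformly for $\u_0'\in B$. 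Therefore $\sup_{\u_0'\in B}|P_tG(\u_0')-P^{(K)}_tG(\u_0')|\le2\|G\|_\infty\,\P(\text{bad event})\to0$, so $P_tG|_B$ is a uniform limit of continuous functions and hence continuous; since such balls exhaust the connected components of $\X$, $P_tG$ is continuous on $\X$.

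The remaining points are routine: the Bismut identity is proven first for $G\in C^1_b$ (by differentiating $\epsilon\mapsto\E[G(\Phi_t(\u_0;\xi+\epsilon\phi))]$ and identifying $D_\h\Phi_t$ with the response to the control $\tfrac1{\sqrt2}\phi$) and then extended to bounded measurable $G$ by a monotone class / dominated convergence argument, the right-hand side being linear and stable under bounded pointwise limits of $G$; since the control is in general anticipating, $\mathcal I_\phi$ is a Skorokhod integral, so its second moment carries extra Malliavin-derivative terms, which for the localised flow are bounded together with $\phi$; and the Malliavin differentiability of $\Phi^{(K)}_t$ in $\xi$ and its differentiability in $\u_0$ are standard, the localised nonlinearity being globally Lipschitz from $\H^2$ into $L^2$ with the $\C^\alpha$-factor frozen by the cutoff. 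The single genuinely delicate ingredient is the controllability/observability estimate for the (linearised) damped beam flow underlying the construction of $\phi$; everything else is bookkeeping.
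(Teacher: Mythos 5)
Your plan is the \cite{hm17}/\cite{tw17}-style Bismut--Elworthy--Li route (differentiate the flow in $\H^2$ directions, build a control reproducing the derivative, Malliavin integration by parts with a Skorokhod integral, localisation by a cutoff), and it is genuinely different from the proof in the paper, which never differentiates the flow at all: there one writes $\Phi_t(\u+\v_0,\xi)=S(t)\u+\stick_t\big(\xi+T_tS(t)\v_0+\Tt\v(\u+\v_0,\xi;t)\big)$ using the right inverse $T_t$ of Lemma \ref{anticonvolution}, compares it with $\Phi_t(\u,\xi)=S(t)\u+\stick_t\big(\xi+\Tt\v(\u,\xi;t)\big)$ by an \emph{adapted} Girsanov change of measure, and concludes from continuity of $\v$ in the initial data (Proposition \ref{continuity}) together with localisation on $\{\norm{\stick}_{C([0,t];\C^\alpha)}\le C_1\}$. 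That argument needs no Malliavin calculus, no Skorokhod integrals and no linearised controllability, which is precisely the machinery your sketch defers to ``routine'' work; your localisation step and the use of Corollary \ref{gwp} and Lemma \ref{anticonvolution} are, on the other hand, the same ingredients the paper uses.

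There is one concrete gap in your sketch as written: the claim that exact controllability of the \emph{linearised} flow (with the random potential $3u^2$) ``reduces to an observability bound for $S(t)$'', to be implemented ``via the controllability Gramian of the linearised flow''. A lower bound on the perturbed Gramian does not follow from the free observability by perturbation: on a window of length $\delta$ the free observability constant degenerates (like $\delta^3$ on the low modes, so the observed $L^2_s$-norm is only $\gtrsim\delta^{3/2}\norm{\g}$), while the contribution of the potential to the Duhamel expansion of the adjoint propagator is of the \emph{same} order $\les M\delta^{3/2}\norm{\g}$, with $M\sim\norm{u}_{L^\infty}^2$ large after your localisation at level $K$ (and $K\to\infty$ at the end); so neither short-time nor fixed-time perturbation closes, and no other argument is offered. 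The statement you need is nevertheless true, but for a different and much simpler reason: the control and the potential act in the same (velocity) slot, so given the free control $\tilde\phi$ steering the free linear equation to the target (this is exactly Lemma \ref{anticonvolution}), the choice $\phi:=\tilde\phi+3u^2 z$, with $z$ the free trajectory driven by $\tilde\phi$, steers the perturbed equation with the quantitative bound $\norm{\phi}_{L^2_{t,x}}\les(1+M)\norm{\h}_{\H^2}$. Once you make this absorption explicit, however, you are very close to the paper's own device of shifting the noise by the full drift difference, at which point the anticipating BEL formula, the Skorokhod second-moment estimates and the Malliavin differentiability of the cutoff flow --- which in your write-up are asserted rather than proved and constitute the bulk of the actual work in the parabolic references --- can be dispensed with entirely in favour of the adapted Girsanov argument.
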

We would like to point out that a phenomenon similar to the one described by this proposition, i.e.\ the fact that the strong Feller property holds only with respect to a stronger topology, has already been observed in the literature for other equations, for instance in \cite{fr08}.

Before being able to prove Proposition \ref{strong_feller}, we need the following (completely deterministic) lemma, 
which will take the role of support theorems for~$\xi$. 
\begin{Lemma}\label{anticonvolution}
For every $t>0$, there exists a bounded operator $T_t: \H^2 \to L^2([0,t];L^2)$ such that for every $\w \in \H^2$,
\begin{equation*}
\w = \int_0^t S(t-t') \vec{0}{\sqrt2(T_t\w)(t')} \d t' = \stick_t(T_t\w).
\end{equation*}
\end{Lemma}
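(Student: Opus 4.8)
The plan is to exhibit the operator $T_t$ explicitly by solving a control problem: given $\w\in\H^2$, find a forcing $\f\in L^2([0,t];L^2)$ supported on $[0,t]$ in the second component such that the linear damped beam equation driven by $\f$ from zero initial data lands exactly at $\w$ at time $t$. Equivalently, recalling that $\stick_t(\eta)=\int_0^t S(t-t')\binom{0}{\sqrt2\eta(t')}\d t'$, we want $\stick_t(\f)=\w$. First I would work in Fourier series and fix a frequency $n\in\Z^3$: writing $\omega_n=\sqrt{\tfrac34+|n|^4}$, the operator $S(t)$ acts on the two-dimensional mode space by the explicit matrix $S_n(t)=e^{-t/2}M_n(t)$ from the definition, and the second column of $S_n(t-t')$ tells us how the forcing in the $u_t$-slot propagates. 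So on each mode we must solve a two-dimensional controllability problem for the damped harmonic oscillator, which is classically controllable (the control enters the velocity variable, and the pair is a Kalman-controllable pair since $\omega_n\neq 0$). Concretely, I would choose $(T_t\w)\widehat{\ }(n,t')$ to be a simple, explicit function of $t'$ — for instance a multiple of $\pi_2 S_n(t-t')^{\ast}$ applied to a suitable vector, i.e.\ the minimal-energy control $\f(t')=\binom{0}{*}$ with $*$ proportional to the second entry of $S_n(t-t')^\ast\,G_n^{-1}\w_n$, where $G_n=\int_0^t S_n(t-t')\binom{0}{0\ 1}\binom{0}{1}\!{}^{\!\ast} S_n(t-t')^\ast\d t'$ is the $2\times2$ controllability Gramian. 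Then $\stick_t(\f)\widehat{\ }(n)=\w_n$ by construction on every mode.

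The second step is the quantitative bound: I must check that $\f=T_t\w$ built this way satisfies $\|\f\|_{L^2([0,t];L^2)}\lesssim \|\w\|_{\H^2}$ with a constant uniform in $n$. The minimal-energy control has $L^2$-norm squared equal to $\langle G_n^{-1}\w_n,\w_n\rangle$, so everything reduces to a lower bound on the smallest eigenvalue of $G_n$ relative to the natural weights. The $\H^2$ norm on mode $n$ weights the $u$-component by $\langle n\rangle^{2}$ (which is comparable to $\omega_n$) and the $u_t$-component by $\langle n\rangle^{-2}$. A direct computation of $G_n$ using $S_n(r)=e^{-r/2}M_n(r)$ and the explicit trigonometric entries shows $G_n$ is, up to bounded factors, $\mathrm{diag}(\omega_n^{-2},1)$ in the relevant regime (the $\sin/\omega_n$, $\cos$ structure produces exactly these scalings after the time integral), so $G_n^{-1}\sim\mathrm{diag}(\omega_n^{2},1)$, and $\langle G_n^{-1}\w_n,\w_n\rangle\sim \omega_n^2|\widehat u(n)|^2+|\widehat u_t(n)|^2\lesssim \langle n\rangle^4|\widehat u(n)|^2+|\widehat u_t(n)|^2$, which is controlled by $\langle n\rangle^4|\widehat u(n)|^2$ plus $|\widehat u_t(n)|^2$. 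Hmm — note this matches $\|\w\|_{\H^2}^2$ on mode $n$ provided we are careful that the $u$-weight in $\H^2$ is $\langle n\rangle^2$ on $u$, i.e.\ $\|\w\|^2_{\H^2,n}\sim\langle n\rangle^4|\widehat u(n)|^2+\langle n\rangle^{-4}\cdots$; so I'd double-check the exponents, but the point is each mode contributes $\lesssim$ its $\H^2$-mass with an $n$-uniform constant, and summing over $n$ gives boundedness of $T_t:\H^2\to L^2([0,t];L^2)$. Linearity of $T_t$ is immediate since each $G_n^{-1}$ is linear.

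The main obstacle I expect is keeping the frequency-uniformity of the Gramian bounds honest: one has to make sure the controllability Gramian $G_n$ does not degenerate as $|n|\to\infty$ (it does not, because the oscillation frequency $\omega_n\to\infty$ only helps averaging, and the damping factor $e^{-r/2}$ is bounded below on $[0,t]$ for fixed $t$) and also as $n\to 0$ (finitely many modes, handled by hand). A cleaner way to package this, avoiding inverting $G_n$ explicitly, is to note that $r\mapsto S_n(t-r)\binom{0}{1}$ spans $\R^2$ as $r$ ranges over any subinterval — because $\cos(\omega_n\cdot)$ and $\sin(\omega_n\cdot)$ are linearly independent — and then a compactness/quantitative Cauchy–Schwarz argument gives the uniform lower Gramian bound; but since we need an explicit operator anyway, I would just carry out the $2\times2$ computation. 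Once the mode-wise bound is in place with an $n$-uniform constant, Plancherel assembles $T_t$ and the identity $\stick_t(T_t\w)=\w$ holds termwise and hence in $\H^2$, completing the proof.
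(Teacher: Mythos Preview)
Your approach is correct and essentially the same as the paper's: both reduce the problem to a uniform-in-$n$ lower bound on a $2\times 2$ quadratic form per Fourier mode, which you phrase as invertibility of the controllability Gramian $G_n$ (yielding the minimal-energy control $T_t=\stick_t^\ast G^{-1}$) and the paper phrases dually as the estimate $\norm{\stick_t^\ast\w}_{L^2_{t,x}}\gtrsim\norm{\w}_{\H^2}$, obtaining the lower bound exactly via your ``cleaner way'' ($B_n>0$ for each $n$ and $B_n\to\tfrac t2\,\id$ as $|n|\to\infty$, after the rescaling $\widehat w(n)\mapsto\lambda_n\widehat w(n)$). One small slip: $\H^2=H^2\times L^2$, so the $u_t$-weight on mode $n$ is $1$, not $\jap{n}^{-2}$; with that corrected your scaling $G_n\sim\mathrm{diag}(\omega_n^{-2},1)$ lines up with $\norm{\w}_{\H^2}^2$ as required.
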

\begin{proof}
This lemma is equivalent to proving that the operator $$\stick_t: L^2([0,t];L^2) \to \H^2$$ has a right inverse. Since $\H^2$ and $L^2([0,t];L^2)$ are both Hilbert spaces, we have that $\stick_t$ has a right inverse if and only if $\stick_t^\ast$ has a left inverse. 
In Hilbert spaces, this is equivalent to the estimate $\norm{\w}_{\H^2} \lesssim \norm{\stick_t^\ast \w}_{L^2([0,t];L^2)}$. We have that 
$$ (\stick_t^\ast\w)(s) = \pi_2S(t-s)^\ast \w,$$
where $\pi_2$ is the projection on the second component. Therefore, 
$$\norm{\stick_t^\ast \w}_{L^2([0,t];L^2)}^2 = \int_0^t 2\norm{\pi_2S(s)^\ast \w}_{L^2}^2.$$ 
For convenience of notation, define $L := \sqrt{\frac34 + \Delta^2}$, and define $\norm{w}_{H}^2 := \big(\int |Lw|^2\big)$ \footnote{It is easy to see that it is equivalent to the usual $H^2$ norm $\int |\sqrt{1+\Delta^2}w|^2$}. In the space $\H \cong \H^2$ given by the norm $\norm{\w}_{\H}^2 = \norm{w}_{H}^2 + \norm{w_t}_{L^2}^2$, we
have that 
$$ e^\frac t 2 \pi_2S(s)^\ast \w = L\sin(sL) v + \Big(\cos(sL)-\frac{\sin(sL)}{2L}\Big) v_t.$$
Therefore, if $\lambda_n := \sqrt{\frac34 + |n|^4}$, by Parseval
$$\norm{\stick_t^\ast \w}_{L^2([0,t];L^2)}^2 \sim_t 
\sum_{n\in\Z^3} \int_0^t \Big| \lambda_n \sin(s\lambda_n)  \widehat w(n) + \Big(\cos(s\lambda_n)-\frac{\sin(s\lambda_n)}{2\lambda_n}\Big) \widehat{w_t}(n)\Big|^2 \d s.  $$
Since by Parseval $\norm{w}_{H} = \norm{\lambda_n\widehat w}_{l^2}$ and $\norm{w_t}_{L^2} = \norm{\widehat w_t}_{l^2}$, the lemma is proven if we manage to prove that the quadratic form on $\R^2$
$$B_n(x,y) := \int_0^t \Big|\sin(s\lambda_n)x + \Big(\cos(s\lambda_n)-\frac{\sin(s\lambda_n)}{2\lambda_n}\Big) y\Big|^2\d s$$
satisfies $B_n \ge c_n \id$, with $c_n \ge \epsilon > 0$ for every $n \in \Z^3$.
We have that $B_n > 0$, since the integrand cannot be identically $0$ for $(x,y) \neq (0,0)$ 
(if the integrand is $0$, by evaluating it in $s=0$ we get $y=0$, from which evaluating in almost any other $s$ we get $x=0$). Therefore, it is enough to prove that $c_n \to c > 0$ as $|n| \to +\infty$. As $|n| \to +\infty$, 
$\lambda_n \to +\infty$ as well, so 
\begin{align*}
\lim_n &\int_0^t \sin(s\lambda_n)^2 = \frac t2,\\
\lim_n &\int_0^t \cos(s\lambda_n)^2 = \frac t2,\\
\lim_n &\phantom{\int_0^t}\frac{\sin(s\lambda_n)}{2\lambda_n} = 0,\\
\lim_n &\int_0^t \sin(s\lambda_n)\cos(s\lambda_n) = 0. 
\end{align*}
Hence, $B_n \to \frac t2 \id$ and so $c_n \to \frac t2 > 0$.
\end{proof}

\begin{proof}[Proof of Proposition \ref{strong_feller}]
We recall the decomposition $\Phi_t(\u,\xi) = S(t)\u + \stick_t(\xi) + \v(\u,\xi;t).$ For $h \in L^2_{t,x}$, adapted to the natural filtration induced by $\xi$, let 
$$\mathcal E(h):= \exp\Big(-\frac12 \int_0^t \norm{h(t')}_{L^2}^2 + \int_0^t \dual{h(t')}{\xi}_{L^2}\Big).$$
Let $C_1\gg1$, $E:= \{\xi | \norm{\stick_t(\xi)}_{C([0,t];\C^\alpha)} \le C_1\}$, and $T_t$ as in Lemma \ref{anticonvolution}. Let $\u_0\in\X$ 
. By Corollary \ref{gwp}, as long as $\xi \in E$ and 
$C_2$ is big enough (depending on $\u_0,C_1$), then 
$$\max(\norm{\v(\u,\xi;t')}_{C([0;t];\H^2)},\norm{S(t')\u + \stick_{t'}(\xi) + v}_{C([0;t];L^2)}^3) \le C_2$$
in a neighbourhood of 
$\u_0$. For convenience of notation, we denote 
\begin{equation*}
(\Tt \v(\u,\xi;t))(t') = -\frac{1}{\sqrt2}(\pi_1(S(t')\u + \stick_{t'}(\xi)) + v)^3.
\end{equation*}
Because of \eqref{veqn}, $\v$ satisfies $\v(t) = \stick_t(\Tt \v)$, and by the continuity of the flow in the initial data, $\Tt\v$ is continuous in $\u_0$. Moreover, $\Tt\v$ will always be adapted to the natural filtration induced by $\xi$. 

 By Girsanov's theorem (\cite[Theorem 1]{girsanov}), we have that 
\begin{align*}
&\phantom{=} \E[G(\Phi_t(\u,\xi))] \\
&= \E[\1_{\xi \in E^c}G(\Phi_t(\u,\xi))] + \E[\1_{\xi \in E}G(S(t)\u + \stick_t(\xi) + \v(\u,\xi;t))] \\
&=\E[\1_{\xi \in E^c}G(\Phi_t(\u,\xi))]  + \E[\1_{\xi \in E}G(S(t)\u + \stick_t(\xi + \Tt\v(\u,\xi;t)))] \\
&=\E[\1_{\xi \in E^c}G(\Phi_t(\u,\xi))] + \E[\1_{\xi \in E + \Tt\v}G(S(t)\u + \stick_t(\xi))\mathcal E(\Tt \v(\u,\xi;t))].
\end{align*}
Notice that Novikov condition ((2.1) in (\cite[Theorem 1]{girsanov})) is satisfied automatically by the estimate $\norm{\Tt\v(\u,\xi;t)}_{\H^2} \le C_2$, which
holds true on $\{\xi \in E\}$\footnote{to define a global adapted process that is equal to $\Tt \v$ on $\{\xi \in E\}$ and bounded by $C_2$ everywhere, we can for instance stop $\Tt \v$ when its norm reaches $C_2$.}.
Let $\v_0 \in \H^2$, with $\norm{\v_0}_{\H^2} \le C_2$.
\begin{align*}
&\phantom{=} \E[G(\Phi_t(\u+ \v_0,\xi))] \\
&\begin{multlined}=\E[\1_{\xi \in E^c}G(\Phi_t(\u+ \v_0,\xi))]\\
+\E[\1_{\xi \in E}G(S(t)\u + \stick_t(\xi + T_tS(t)\v_0 + \Tt\v(\u+\v_0,\xi;t)))] 
\end{multlined}\\
&\begin{multlined}=\E[\1_{\xi \in E^c}G(\Phi_t(\u+ \v_0,\xi))]\\
+\E[\1_{E+ T_tS(t)\v_0+\Tt\v}G(S(t)\u + \stick_t(\xi))\mathcal E(T_tS(t)\v_0+\Tt\v)].
\end{multlined}
\end{align*}
Up to changing $\v$ outside of $E$, we can assume $\norm{\v(\u,\xi;t)}_{\H^2} \le C_2$.
Therefore, we have (using Girsanov again)
\begin{align*}
&\phantom{=} \big|\E[G(\Phi_t(\u+ \v_0,\xi))] - \E[G(\Phi_t(\u,\xi))]\big|\\
&\begin{aligned}
\le \norm{G}_{L^\infty}&\Big(2 \P(\xi \in E^c) + \E[\1_{(E + \Tt\v)^c}\mathcal E(\Tt\v(\u,\xi;t))] \\
&+\E[\1_{(E+ T_tS(t)\v_0+\Tt\v)^c}\mathcal E(T_tS(t)\v_0+\Tt\v)] \\
&+ \E\big|\mathcal E(\Tt\v(\u,\xi;t))-\mathcal E(T_tS(t)\v_0 + \Tt\v((\u+\v_0),\xi;t) )\big|\Big)
\end{aligned}\\
&=\norm{G}_{L^\infty}(4\P(\xi \in E^c) + \E\big|\mathcal E(T_tS(t)\v_0 + \Tt\v(\u+\v_0,\xi;t))-\mathcal E(\Tt\v(\u),\xi;t)\big|)
\end{align*}
Notice that, by Burkholder-Davis-Gundy inequality, for $h$ in the form $h= T_t S(t) \w + \Tt \v$, with both $\norm{\v}_{L^2_{t,x}} \le C_2$ and $\norm{\w}_{\H^2} \le C_2$, 
\begin{align*}
\E[\exp(p\dual{h}{\xi}_{L^2_{t,x}})] &\le \sum_{k\ge0} p^k\frac1{k!}\E[|\dual{h}{\xi}_{L^2_{t,x}}|^k] \\
&\le1 + \sum_{k\ge1} p^k C^k \frac {k^{\frac k2}}{k!}\big(\norm{T_tS(t)\w}_{L^2_{t,x}}^k + \E[\norm{\Tt \v}_{L^2_{t,x}}^k] \big)\\
& \le 1 + \Psi_1(\norm{\w}_{\H^2})\norm{\w}_{\H^2} + \Psi_2(C_2)\E[\norm{\Tt \v}_{L^2_{t,x}}^2]^\frac12, \\
& \le \Psi(C_2).
\end{align*}
where $\Psi_1,\Psi_2, \Psi$ are monotone analytic functions with infinite radius of convergence. 
With the same computation, we get
$$\E[\big(\exp(p\dual{h}{\xi}_{L^2_{t,x}})-1\big)^n] \le \Psi_{3,n}(C_2) (\norm{\w}_{\H^2} + \E[\norm{\Tt \v}_{L^2_{t,x}}^2]^\frac12) \lesssim_{n,C_2}  \E[\norm{h}_{L^2_{t,x}}^2]^\frac12.$$
Therefore, by continuity of the flow of \eqref{bvec} in the initial data, for $\norm{\v_0}_{\H^2} \ll 1$, we have that
\begin{align*}
&\phantom{=}\E\big|\mathcal E(T_tS(t)\v_0 + \Tt\v(\u+\v_0,\xi;t))-\mathcal E(\Tt\v(\u),\xi;t)\big|\\
&\begin{aligned}
=&\E\Big[\exp\Big(-\frac12\norm{\Tt\v(\u,\xi;t)}_{L^2_{t,x}}^2+ \dual{\Tt\v(\u,\xi;t)}{\xi}_{L^2_{t,x}}\Big)\\
&\begin{multlined} \times \Big(\exp\Big(-\frac12(\norm{T_tS(t)\v_0 + \Tt\v(\u+\v_0,\xi;t)}_{L^2_{t,x}}^2 - \norm{\Tt\v(\u,\xi;t)}_{L^2_{t,x}}^2)\\
+ \dual{T_tS(t)\v_0 + \Tt\v(\u+\v_0,\xi;t)-\Tt\v(\u,\xi;t)}{\xi}_{L^2_{t,x}}\Big) - 1 \Big)\Big]
\end{multlined}\\
=&\E\Big[\exp\Big(-\frac12\norm{\Tt\v(\u,\xi;t)}_{L^2_{t,x}}^2+ \dual{\Tt\v(\u,\xi;t)}{\xi}_{L^2_{t,x}}\Big)\\
&\resizebox{.97\hsize}{!}{$\displaystyle \begin{multlined}\times\Big(
\exp\Big(-\frac12(\norm{T_tS(t)\v_0 + \Tt\v(\u+\v_0,\xi;t)}_{L^2_{t,x}}^2 - \norm{\Tt\v(\u,\xi;t)}_{L^2_{t,x}}^2)\Big)-1\Big)\\
\times\exp\Big( \dual{T_tS(t)\v_0 + \Tt\v(\u+\v_0,\xi;t)-\Tt\v(\u,\xi;t)}{\xi}_{L^2_{t,x}}\Big)
\end{multlined}$}\\
&\phantom{\times\Big(}\,+\exp\Big( \dual{T_tS(t)\v_0 + \Tt\v(\u+\v_0,\xi;t)-\Tt\v(\u,\xi;t)}{\xi}_{L^2_{t,x}}\Big)  - 1 \Big)\Big]
\end{aligned}\\
&\le \Big[\E\exp\Big(2\dual{\Tt\v(\u,\xi;t)}{\xi}_{L^2_{t,x}}\Big)\Big]^\frac12\\
&\phantom{\le} \resizebox{.97\hsize}{!}{$\displaystyle\times\Big[\Big(\E\Big(\exp\Big(-\frac12(\norm{T_tS(t)\v_0 + \Tt\v(\u+\v_0,\xi;t)}_{L^2_{t,x}}^2 - \norm{\Tt\v(\u,\xi;t)}_{L^2_{t,x}}^2)\Big)-1\Big)^4\Big)^\frac14$}\\
&\phantom{\le\times\Big[}\times\Big(\E\exp\Big(4\dual{T_tS(t)\v_0 + \Tt\v(\u+\v_0,\xi;t)-\Tt\v(\u,\xi;t)}{\xi}_{L^2_{t,x}}\Big)\Big)^\frac14
\\
&\phantom{\le \times\Big[} + \resizebox{.89\hsize}{!}{$\displaystyle\Big(\E\Big(\exp\Big( \dual{T_tS(t)\v_0 +\Tt\v(\u+\v_0,\xi;t)-\Tt\v(\u,\xi;t)}{\xi}_{L^2_{t,x}}\Big)  - 1 \Big)^2\Big)^\frac12\Big]$}\\
&\lesssim_{C_2}
[\E\norm{T_tS(t)\v_0 + \Tt\v(\u+\v_0,\xi;t)-\Tt\v(\u,\xi;t)}_{L^2_{t,x}}^2]^\frac12
,
\end{align*}
which is converging to $0$ as $\norm{\v_0}_{\H^2} \to 0$ because of dominated convergence. Therefore,
\begin{align*}
&\limsup_{\norm{v_0}_{\H^2}\to 0} \big|\E[G(\Phi_t(\u+ \v_0,\xi))] - \E[G(\Phi_t(\u,\xi))]\big| \\
\le& \limsup_{\norm{v_0}_{\H^2}\to 0}\norm{G}_{L^\infty}\left(
\begin{multlined}
4\P(\xi \in E^c) \\
+ \E\big|\mathcal E(T_tS(t)\v_0 + \Tt\v(\u+\v_0,\xi;t))-\mathcal E(\Tt\v(\u),\xi;t)\big|
\end{multlined}\right)\\
=& 4\norm{G}_{L^\infty}\P(\xi \in E^c).
\end{align*}
Since the left-hand-side does not depend on $C_1$, we can send $C_1 \to \infty$, and we obtain that 
$$\lim_{\norm{v_0}_{\H^2}\to 0} \big|\E[G(\Phi_t(\u+ \v_0,\xi))] - \E[G(\Phi_t(\u,\xi))]\big| = 0,
 $$
i.e.\ $\E[G(\Phi_t(\u,\xi))]$ is continuous in $\u$ in the $\X$ topology.
\end{proof}

While the topology of $\X$ does not allow to extend many common consequences of the strong Feller property, 
we still have the following generalisation of the disjoint supports property.
\begin{Corollary}\label{feller_supports}
Let $\nu_1\perp\nu_2$ be two invariant measures. Then there exists a measurable
open set $V_0 \subseteq \X$ such that $\nu_1(V_0) = 1$ and $\nu_2(V_0)=0$.
\end{Corollary}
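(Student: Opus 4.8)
The plan is to run the classical derivation of disjoint supports from the strong Feller property, being careful to keep track of the two sigma-algebras / topologies in play. Since $\nu_1$ and $\nu_2$ are mutually singular probability measures, fix $A \in \mathscr B$ with $\nu_1(A) = 1$ and $\nu_2(A) = 0$, and fix any $t > 0$. Consider
\[
\Psi(\u) := P_t\1_A(\u) = \E[\1_A(\Phi_t(\u,\xi))],
\]
which is defined for every $\u \in X^\alpha$ by Corollary \ref{gwp}, takes values in $[0,1]$, and is $\mathscr B$-measurable because $\u \mapsto \Phi_t(\u,\xi)$ is $X^\alpha$-continuous in the initial datum (Proposition \ref{continuity}). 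Since $\1_A$ is bounded and $\mathscr B$-measurable, Proposition \ref{strong_feller} applies and tells us that $\Psi$ is continuous as a map $\X \to \R$.

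Next I would invoke invariance. Applying \eqref{defInvariance} with $F = \1_A$ to each of $\nu_1,\nu_2$ gives $\int \Psi \, \d\nu_i = \nu_i(A)$ for $i=1,2$. Since $0 \le \Psi \le 1$ and $\int (1-\Psi)\,\d\nu_1 = 1-\nu_1(A) = 0$, we get $\Psi = 1$ $\nu_1$-almost everywhere; since $\int \Psi \, \d\nu_2 = \nu_2(A) = 0$, we get $\Psi = 0$ $\nu_2$-almost everywhere.

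Finally, set $V_0 := \{\u \in \X : \Psi(\u) > 0\}$. Continuity of $\Psi$ on $\X$ makes $V_0$ open in the $\X$ topology, while $\mathscr B$-measurability of $\Psi$ makes $V_0 \in \mathscr B$, so $V_0$ is a measurable open set in the required sense. From $\Psi = 1$ $\nu_1$-a.e.\ we get $\nu_1(V_0) \ge \nu_1(\{\Psi = 1\}) = 1$, and from $\Psi = 0$ $\nu_2$-a.e.\ we get $\nu_2(V_0) = \nu_2(\{\Psi > 0\}) = 0$.

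The argument has no genuine obstacle once Proposition \ref{strong_feller} is available; the only subtlety is the bookkeeping between the Borel structure $\mathscr B$ on $X^\alpha$ — with respect to which $\1_A$ must be measurable and with respect to which we conclude $V_0 \in \mathscr B$ — and the finer metric topology of $\X$, with respect to which $\Psi$ is continuous and $V_0$ is therefore open. Both are delivered exactly by the hypotheses in hand: $X^\alpha$-continuity of the flow in the initial datum gives the first, and the restricted strong Feller property gives the second.
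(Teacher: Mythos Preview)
Your proof is correct and follows essentially the same route as the paper's: define $\Psi = P_t\1_A$, use the restricted strong Feller property for $\X$-continuity and the $X^\alpha$-flow for $\mathscr B$-measurability, apply invariance to get $\Psi=1$ $\nu_1$-a.e.\ and $\Psi=0$ $\nu_2$-a.e., and take a superlevel set. The only cosmetic difference is that the paper uses $V_0=\{\Psi>\tfrac12\}$ while you use $V_0=\{\Psi>0\}$; both choices work identically here.
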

\begin{proof}
Let $S_1 \subset\X$ be a measurable set with $\nu_1(S_1) = 1$, $\nu_2(S_1) = 0$. Recall that a set is measurable if and only if it is Borel in $X^\alpha$. Consider the function
$$\Psi(\u):= \E[\1_{S_1}(\Phi_t(\u,\xi))]. $$
By the Proposition \ref{strong_feller}, $\Psi:\X\to \R$ is continuous. Moreover, since $S_1$ is a Borel set in $X^\alpha$, $\Psi$ is also measurable. 
By invariance of $\nu_j$, $\Psi = 1$ $\nu_1$-a.s. and $\Psi = 0$ $\nu_2$-a.s. Let $V_0:=\{\Psi > \frac12\}$. We have that $V_0\subset\X$ is open 
by continuity of $\Psi$, it is measurable since $\Psi$ is measurable, 
$$\nu_1(V_0) \ge \nu_1(\{\Psi=1\}) = 1 $$
and 
$$\nu_2(V_0) \le \nu_2(\{\Psi\neq0\}) = 0. $$
\end{proof}
\begin{Lemma}[Irreducibility] \label{irreducibility}
Suppose that $\nu$ is invariant for the flow of \eqref{bvec}, and let $E\subset X^\alpha$ such that $\nu(E) = 0$. 
Then for every $\w \in \H^2$, $\nu(E+\w) = 0$.
\end{Lemma}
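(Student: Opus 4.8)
\emph{The approach.} The plan is to combine the invariance of $\nu$ with a quasi-invariance property of the individual transition laws. Fix $t>0$; by \eqref{defInvariance} a single such time suffices. For a Borel set $A$ write $\mu_{\u_0}(A):=\P(\Phi_t(\u_0,\xi)\in A)$. Invariance gives $\int\mu_{\u_0}(E)\,\d\nu(\u_0)=\nu(E)=0$, hence $\mu_{\u_0}(E)=0$ for $\nu$-almost every $\u_0$, while also $\nu(E+\w)=\int\mu_{\u_0}(E+\w)\,\d\nu(\u_0)$. Thus it is enough to prove that for every $\u_0\in X^\alpha$, every $\w\in\H^2$, and every Borel set $A$, the implication $\mu_{\u_0}(A)=0\Rightarrow\mu_{\u_0}(A+\w)=0$ holds; that is, each transition law $\mu_{\u_0}$ should be shown quasi-invariant under translations by $\H^2$.

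\emph{A Gaussian reference measure.} First I would compare $\mu_{\u_0}$ with $\gamma_{\u_0}:=\mathrm{Law}\bigl(S(t)\u_0+\stick_t(\xi)\bigr)$. The operator $h\mapsto\stick_t(h)$ is bounded from $L^2([0,t];L^2)$ into $\H^2$ (by boundedness of $S(\cdot)$ on $\H^2$), and $\H^2\into X^\alpha$ (Lemma \ref{H2intoX}); hence $\gamma_{\u_0}$ is a translate of a centred Gaussian measure on the separable Banach space $X^\alpha$ (it is supported there by Proposition \ref{stickX}), whose Cameron--Martin space contains the image $\stick_t\bigl(L^2([0,t];L^2)\bigr)$, and by Lemma \ref{anticonvolution} this image is all of $\H^2$. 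The Cameron--Martin theorem then gives exactly the quasi-invariance we want for $\gamma_{\u_0}$: $\gamma_{\u_0}(A)=0$ implies $\gamma_{\u_0}(A+\w)=0$ for every $\w\in\H^2$.

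\emph{Transferring via absolute continuity.} The crux is to show that $\mu_{\u_0}$ and $\gamma_{\u_0}$ have the same null sets. Since $\v=\stick_t(\Tt\v)$ is a rewriting of \eqref{veqn} and $\stick_t$ is linear, $\Phi_t(\u_0,\xi)=S(t)\u_0+\stick_t\bigl(\xi+\Tt\v(\u_0,\xi)\bigr)$, with $\Tt\v(\u_0,\xi)$ adapted to the filtration of $\xi$. On the event $\{\norm{\stick_t}_{C([0,t];\C^\alpha)}\le C_1\}$ one has $\norm{\Tt\v(\u_0,\xi)}_{L^2([0,t];L^2)}\le C_2(C_1,\u_0)$, by Corollary \ref{gwp} and the Sobolev embedding $H^2\into L^6$. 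I would then replace $\Tt\v$ by a globally bounded adapted modification (stopping it when its $L^2_{t,x}$-norm reaches $C_2$, exactly as in the proof of Proposition \ref{strong_feller}), apply Girsanov's theorem---Novikov's condition being now trivial---to conclude that, on this event, the law of $\xi+\Tt\v(\u_0,\xi)$ is equivalent to that of $\xi$ with an a.e.\ positive density, push this forward by the bounded linear map $\eta\mapsto S(t)\u_0+\stick_t(\eta)$, and let $C_1\to\infty$ (the complementary events having vanishing probability). This gives $\mu_{\u_0}(A)=0\iff\gamma_{\u_0}(A)=0$ for every Borel $A$. Chaining $\mu_{\u_0}(A)=0\Rightarrow\gamma_{\u_0}(A)=0\Rightarrow\gamma_{\u_0}(A+\w)=0\Rightarrow\mu_{\u_0}(A+\w)=0$ proves the implication above, and together with the first paragraph this yields $\nu(E+\w)=0$.

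\emph{Where the difficulty lies.} The delicate step is the absolute continuity $\mu_{\u_0}\sim\gamma_{\u_0}$: the Radon--Nikodym density $\mathcal E(\Tt\v)$ is \emph{not} integrable over the whole probability space---by \eqref{vgrowth} the $L^2_{t,x}$-norm of $\Tt\v$ is controlled only by a large power of $\norm{\stick_t}_{C([0,t];\C^\alpha)}$, so that $\E\exp\bigl(\tfrac12\norm{\Tt\v}_{L^2_{t,x}}^2\bigr)=+\infty$ in general---which is why the localization to $\{\norm{\stick_t}_{C([0,t];\C^\alpha)}\le C_1\}$ and the limit $C_1\to\infty$ are essential. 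Carrying this out rigorously requires exactly the exponential-moment and Burkholder--Davis--Gundy estimates for $\mathcal E(\Tt\v)$ on the good event that are already established in the proof of Proposition \ref{strong_feller}, and I expect that part of the argument to transfer with only cosmetic changes.
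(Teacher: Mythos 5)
Your proposal is correct and is in substance the paper's own argument: the same Girsanov change of drift localized to $\{\norm{\stick_\cdot(\xi)}_{C([0,t];\C^\alpha)}\le C_1\}$ with a stopped drift for Novikov, the same use of Lemma \ref{anticonvolution} to realize $\H^2$-translations as admissible shifts of the noise, positivity of the density $\mathcal E$, and the limit $C_1\to\infty$. The only difference is packaging: the paper works directly under $\nu$ (first showing $\1_E(S(t)\u+\stick_t(\xi))=0$ a.s., then a second Girsanov with drift $\Tt\v-T_t\w$), whereas you phrase it as equivalence of each transition kernel with the Gaussian law of the linear flow plus Cameron--Martin quasi-invariance, which is the deterministic special case of the same Girsanov step.
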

\begin{proof}
Since $X^\alpha$ is a Polish space, by inner regularity of $\nu$ it is enough to prove the statement when $E$ is
compact. Take $C_1 < +\infty$, and let 
$$F:=\{\xi:\norm{\stick_{\cdot}(\xi)}_{C([0,t];\C^\alpha)} \le C_1\}.$$
Proceeding in a similar way to Proposition \ref{strong_feller}, we have that by the compactness of $E$, the boundedness of $\stick_t(\xi)$ and Proposition \ref{gwp}, $\Tt \v$ satisfies Novikov's condition on $\{\xi \in F\}$ and
\begin{align*}
0 = \nu(E) &= \int \E[\1_{E}(S(t)\u + \stick_t(\xi) + \v])\d\nu(\u)\\
&\ge \int \E[\1_{F}(\xi) \1_{E}(S(t)\u + \stick_t(\xi) + \v)]\d\nu(\u) \\
&= \int \E[\1_{F+\Tt\v}(\xi)\1_{E}(S(t)\u + \stick_t(\xi))\mathcal E(\Tt\v)] \d\nu(\u).
\end{align*}
Since $\mathcal E > 0$ $\P\times\nu-$a.s., this implies that $\1_{F+\Tt \v}(\xi)\1_{E}(S(t)\u + \stick_t(\xi))  = 0$ $\P\times\nu-$a.s.
By sending $C_1\to \infty$, by monotone convergence we obtain that 
$\1_{E}(S(t)\u + \stick_t(\xi)) = 0$ $\P\times\nu-$a.s.

Let $\w \in \H^2$. Then, proceeding similarly, 
\begin{align*}
&\int \E[\1_{F}(\xi) \1_{E+\w}(S(t)\u + \stick_t(\xi) + \v)]\d\nu(\u)\\
=&\int \E[\1_{F}(\xi) \1_{E}(S(t)\u + \stick_t(\xi) + \v-\w)]\d\nu(\u)\\
=&\int \E[\1_{F+\Tt\v-T_t\w}(\xi) \1_{E}(S(t)\u + \stick_t(\xi))\mathcal E(\Tt\v-T_t\w)] \d \nu(\u) = 0,
\end{align*}
since the integrand is $0$ $\P\times\nu-$a.s.
By taking $C_1 \to \infty$, by monotone convergence we get 
\begin{align*}
0 =&\int \E[\1_{E+\w}(S(t)\u + \stick_t(\xi) + \v)]\d\nu(\u)\\
=&\nu(E+\w).
\end{align*}
\end{proof}

\begin{proof}[Proof of Lemma \ref{supports}]
Let $\nu_1\perp \nu_2$ be two invariant measures, let $V=V_0$ be the set given by Corollary \ref{feller_supports}, and let $\{\w_n\}_{n\in\N}$ be a countable dense subset of $\H^2$. We have that, by definition,
$\nu_1(V) = 1$ and $\nu_2(V) = 0$. By Lemma \ref{irreducibility}, $\nu_2(V + \w_n) = 0$ for 
every $\w_n$. Therefore, $\nu_2(\bigcup_n (V+ \w_n)) = 0$. Moreover, since $V$ is open in $\X$, 
we have that $\bigcup_n (V+ \w_n) = V + \H^2$. Therefore, $\nu_2(V+\H^2)=0$. 
\end{proof}
\subsection{Projected flow}
In this subsection, we will bootstrap ergodicity of the measure $\rho$ from ergodicity of the flow of the
\emph{linear} equation
\begin{equation} \label{linear}
\partial_t \vec{u}{u_t} =
-\begin{pmatrix}0 & -1 \\ 1+\Delta^2 & 1\end{pmatrix} \vec{u}{u_t} + \vec{0}{\sqrt2 \xi}.
\end{equation}
 The measure $\mu$ defined in \eqref{mu} is invariant for the flow of this equation (which can be seen
 as a special case of Proposition \ref{invarianceN} for $N=-1$). Let $L(t)\u$ be the flow of \eqref{linear}, i.e.\ 
 \begin{equation*}
 L(t)\u := S(t)\u + \stick_t(\xi).
 \end{equation*}
 \begin{Lemma}\label{muergodic}
 The measure $\mu$ is the only invariant measure for \eqref{linear}. Moreover, for every $\u_0 \in X^\alpha$, the law of $L(t)\u_0$ is 
 weakly converging to $\mu$ as $t \to \infty$.
 \end{Lemma}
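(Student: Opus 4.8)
The plan is to deduce both assertions from the decomposition $L(t)\u_0 = S(t)\u_0 + \stick_t(\xi)$ together with the observation that $S(t)$ is a genuine contraction on $X^\alpha$: from the semigroup property $S(t+s)=S(t)S(s)$ and the very definition of the norm, $\norm{S(t)\u_0}_{X^\alpha} = \sup_{s\ge 0} e^{s/8}\norm{S(s+t)\u_0}_{\C^\alpha} = e^{-t/8}\sup_{r\ge t} e^{r/8}\norm{S(r)\u_0}_{\C^\alpha} \le e^{-t/8}\norm{\u_0}_{X^\alpha}$, so $S(t)\u_0 \to \0$ in $X^\alpha$ (exponentially) for every $\u_0\in X^\alpha$. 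Hence $L(t)\u_0$ equals the stochastic convolution $\stick_t(\xi)$ up to a deterministic perturbation that vanishes in $X^\alpha$, and it suffices to show that the law of $\stick_t(\xi)$ converges weakly, as a probability measure on $X^\alpha$, to $\mu$.

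The key step is that $\stick_t(\xi)$ converges in law on $X^\alpha$. Pathwise this fails, since $\stick_t$ keeps diffusing; but the law of white noise on $[0,t]$ is invariant under the reflection $r\mapsto t-r$, so $\stick_t(\xi) = \int_0^t S(t-r)\vec{0}{\sqrt2\xi(r)}\,\d r$ has the same law, as an $X^\alpha$-valued random variable, as $\widehat\stick_t := \int_0^t S(r)\vec{0}{\sqrt2\xi(r)}\,\d r$. The family $\{\widehat\stick_t\}$ is \emph{nested}: for $t>s$ one has $\widehat\stick_t - \widehat\stick_s = \int_s^t S(r)\vec{0}{\sqrt2\xi(r)}\,\d r$, a shifted instance of the stochastic convolution whose covariance against a test function $\f$ is $2\int_s^t\norm{\Re\,\pi_2 S(r)^\ast\f}_{L^2}^2\,\d r$, and is therefore controlled by the bound $\norm{S(r)}_{\H^\alpha\to\H^\alpha}\lesssim e^{-r/2}$. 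Running the estimates of Section~\ref{stochastic_section} (Propositions~\ref{stickreg} and~\ref{stickX}) — whose constants are uniform in the base time — on this shifted object yields $\E\norm{\widehat\stick_t-\widehat\stick_s}_{X^\alpha}^p \lesssim_p e^{-ps/2}$ for $t>s$ and $p$ large; hence $\{\widehat\stick_t\}$ is Cauchy in $L^p(\Omega;X^\alpha)$ and converges there to an $X^\alpha$-valued random variable $\widehat\stick_\infty$ (formally $\int_0^\infty S(r)\vec{0}{\sqrt2\xi(r)}\,\d r$). In particular the law of $\stick_t(\xi)$ converges weakly on $X^\alpha$ to the law $\mu'$ of $\widehat\stick_\infty$.

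Combining the two steps gives, for every fixed $\u_0\in X^\alpha$, that the law of $L(t)\u_0$ converges weakly on $X^\alpha$ to $\mu'$: for $F:X^\alpha\to\R$ bounded and uniformly continuous, $\bigl|\E F(S(t)\u_0+\stick_t)-\E F(\stick_t)\bigr| \le \omega_F\bigl(\norm{S(t)\u_0}_{X^\alpha}\bigr)$, where $\omega_F$ is a modulus of continuity of $F$; since $\norm{S(t)\u_0}_{X^\alpha}\to 0$ and $\E F(\stick_t)\to\int F\,\d\mu'$, this gives $\E F(L(t)\u_0)\to\int F\,\d\mu'$. It remains to identify $\mu'=\mu$ and to upgrade this to uniqueness, both of which are soft: if $\nu$ is \emph{any} invariant measure for \eqref{linear} (concentrated on $X^\alpha$), then for $\u_0$ distributed according to $\nu$ and independent of $\xi$ one has $\int F\,\d\nu = \int \bigl(\E F(L(t)a)\bigr)\,\d\nu(a) \to \int \bigl(\int F\,\d\mu'\bigr)\,\d\nu(a) = \int F\,\d\mu'$ by dominated convergence (using $\nu(X^\alpha)=1$ and the fixed-$\u_0$ convergence just proved), so $\nu=\mu'$. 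Applying this to $\nu=\mu$, which is invariant for \eqref{linear} by the $N=-1$ case of Proposition~\ref{invarianceN}, gives $\mu=\mu'$. Therefore $\mu$ is the unique invariant measure for \eqref{linear}, and $L(t)\u_0$ converges weakly to $\mu$ for every $\u_0\in X^\alpha$.

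The main obstacle is the second paragraph: obtaining convergence in law \emph{in the $X^\alpha$ topology}, rather than in a weaker Sobolev or distributional topology where tightness would be immediate. This is precisely why one reduces $\stick_t$ to the nested, forward-convergent family $\widehat\stick_t$ via time reversal, and why one must check that the quantitative $X^\alpha$-estimates of Section~\ref{stochastic_section} hold uniformly in the base time so that they apply verbatim to the increments $\widehat\stick_t-\widehat\stick_s$. Everything else — the contraction bound for $S(t)$, the comparison of $L(t)\u_0$ with $\stick_t$, and the identification of the limit with $\mu$ — is routine.
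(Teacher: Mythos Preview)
Your argument is correct, but the paper's proof is considerably shorter because it never proves that the law of $\stick_t(\xi)$ converges. Instead, it uses the \emph{already established} invariance of $\mu$ as a coupling device: for Lipschitz $F$ and any $\u_0,\u_1\in X^\alpha$, the shared noise gives
\[
\big|\E F(L(t)\u_0)-\E F(L(t)\u_1)\big|\le \min\big(e^{-t/8}\Lip(F)\norm{\u_0-\u_1}_{X^\alpha},\ \norm{F}_{L^\infty}\big),
\]
and integrating the variable $\u_1$ against $\mu$ (which is stationary) yields $\E F(L(t)\u_0)\to\int F\,\d\mu$ by dominated convergence. Uniqueness follows by integrating both variables. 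This bypasses your entire second paragraph: no time-reversal, no $\widehat\stick_t$, no need to extract uniform-in-$t$ moment bounds on $\norm{\stick_t}_{X^\alpha}$ from Proposition~\ref{stickX}.

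Your route---the classical Ornstein--Uhlenbeck argument via the nested forward integral $\widehat\stick_t$---has the advantage of being constructive (it identifies the limiting law directly as that of $\int_0^\infty S(r)\vec{0}{\sqrt2\xi(r)}\,\d r$ before appealing to invariance), and it would still work even if one had not yet checked that $\mu$ is invariant. The cost is the extra work you flag in your last paragraph: one must go back into the proof of Proposition~\ref{stickX} to check that the moment bounds $\E\norm{S(\cdot)\stick_t}_{C_s([N,N+1];\W^{\alpha,q})}^p\lesssim e^{-pN/2}$ are uniform in $t$ and then sum them to a genuine $L^p(\Omega;X^\alpha)$ bound (note your exponent should be $e^{-ps/8}$, coming from the $X^\alpha$ contraction of $S(s)$, not $e^{-ps/2}$). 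This is routine but not literally stated in the paper.
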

\begin{proof}
Let $\u_0, \u_1 \in X^\alpha$, and let $F: X^\alpha \to \R$ be a Lipschitz function. We have that 
\begin{align*}
\big|\E[F(L(t)\u_0) - F(L(t)\u_1)]\big| & = \big|\E[F(S(t)\u _0+ \stick_t(\xi)) - F(S(t)\u_1 + \stick_t(\xi))]\big| \\
& \le \E[\min(\Lip(F)\norm{S(t)\u_0 - S(t)\u_1}_{X^\alpha}, \norm{F}_{L^\infty})] \\
&\le \min (e^{-\frac t 8} \Lip(F)\norm{\u_0 - \u_1}_{X^\alpha}, \norm{F}_{L^\infty})
\end{align*}
Therefore by invariance of $\mu$, we have that 
\begin{align*}
&\Big|\E[F(L(t)\u_0)] - \int F(\u_1) \d \mu(\u_1)\Big| \\
=& \Big|\int \big(\E[F(L(t)\u_0) -\E[F(L(t)\u_1)]\big) \d \mu(\u_1)\Big|\\
\le& \int \min (e^{-\frac t 8} \Lip(F)\norm{\u_0 - \u_1}_{X^\alpha}, \norm{F}_{L^\infty}) \d \mu(\u_1),
\end{align*}
which is converging to $0$ by dominated convergence. Since Lipschitz functions are dense in the set of continuous functions, this implies 
that the law of $L(t)\u_0$ is weakly converging to $\mu$. Similarly, if $\nu$ is another invariant measure, 
\begin{align*}
&\phantom{=}\Big| \int F(\u_0) \d \nu(\u_0) - \int F(\u_1) \d \mu(\u_1)\Big| \\
&= \Big|\iint \big(\E[F(L(t)\u_0) -\E[F(L(t)\u_1)]\big)\d \nu(\u_0) \d \mu(\u_1)\Big|\\
&\le \iint \min (e^{-\frac t 8} \Lip(F)\norm{\u_0 - \u_1}_{X^\alpha}, \norm{F}_{L^\infty}) \d \nu(\u_0) \d \mu(\u_1),
\end{align*}
which is converging to $0$ by dominated convergence. Since the left hand side does not depend on $t$, one gets that $\int F(\u_0) \d \nu(\u_0) = \int F(\u_1) \d \mu(\u_1)$ for every $F$ Lipschitz, so $\mu = \nu$. 
\end{proof}

Consider the (algebraic) projection $\pi: X^\alpha \to X^\alpha/\H^2$. While the quotient space does 
not have a sensible topology, we can define the quotient sigma-algebra, 
\begin{equation*}
\A:= \{ F \subseteq X^\alpha/\H^2 \text{ s.t. } \pi^{-1}(F)\subseteq X^\alpha \text{ Borel}\}, 
\end{equation*}
which corresponds to the finest $\sigma$-algebra that makes the map $\pi$ measurable. While this will not be relevant in the following, we can see that $\A$ is 
relatively rich: if $E\subset X^\alpha$ is closed and $B$ is the closed unit ball in $\H^2$, since $B$ is compact in $X^\alpha$,  
$E+nB$ is closed for every $n$, so $E+ \H^2 = \bigcup_n  E + nB$ is Borel. Therefore, $\pi(E) \in \A$.

Since $S(t)$ maps $\H^2$ into itself, is it easy to see that if $\pi(\u)=\pi(\v)$, then $\pi(L(t)\u) = \pi(L(t)\v)$.
Therefore, $\pi (L(t) \u)$ is a function of $\pi(\u)$, and we define 
\begin{equation*}
\bar L(t)\pi(\u):= \pi(L(t)\u).
\end{equation*}
 Moreover, if $\Phi_t(\u;\xi) = S(t)\u + \stick_t(\xi) + \v(\u,\xi;t)$ is the flow of \eqref{bvec}, where $\v$ solves \eqref{veqn}, since $\v$ belongs to $\H^2$,
 we have that 
\begin{align*}
\pi(\Phi_t(\u;\xi)) \!=\! \pi(S(t)\u + \stick_t + \v(\u,\xi;t)) \!=\! \pi(S(t)\u + \stick_t) \!=\! \pi(L(t)\u) \!=\! \bar L(t)\pi(\u).
\end{align*}
 Therefore, also $\pi(\Phi_t(\u;\xi))$ is a function of $\pi(\u)$, and moreover 
\begin{equation} \label{piPhi=piL}
\pi(\Phi_t(\u;\xi)) = \bar L(t)\pi(\u),
\end{equation}
so the projections of the flows for \eqref{linear} and \eqref{bvec} coincide. 
\begin{Proposition} \label{pisharpergodicity}
The measure $\pi_\sharp(\mu)$ is ergodic for the process associated to $\bar L(t): X^\alpha/\H^2 \to X^\alpha/\H^2$. 
\end{Proposition}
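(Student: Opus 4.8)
The plan is to transfer ergodicity from the linear flow $L(t)$ on $X^\alpha$ down to the quotient $X^\alpha/\H^2$ via the projection $\pi$. First I would record that $\pi_\sharp\mu$ is invariant for $\bar L(t)$: since $\mu$ is invariant for $L(t)$ (Lemma \ref{muergodic}, or Proposition \ref{invarianceN} with $N=-1$) and $\pi\circ L(t)=\bar L(t)\circ\pi$ (the identity \eqref{piPhi=piL}), one gets $(\bar L(t))_\sharp\pi_\sharp\mu=\pi_\sharp L(t)_\sharp\mu=\pi_\sharp\mu$.

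The main structural step will be to set up the Markov semigroup $\bar P_t g(\bar\u):=\E[g(\bar L(t)\bar\u)]$ on $(X^\alpha/\H^2,\A)$ and to establish the intertwining relation $(\bar P_t g)\circ\pi=P_t(g\circ\pi)$ on $X^\alpha$, where $P_t f(\u)=\E[f(L(t)\u)]$. For this I would argue: if $g$ is bounded and $\A$-measurable then $g\circ\pi$ is Borel on $X^\alpha$ by definition of $\A$; recalling that $\pi(L(t)\u)=\bar L(t)\pi(\u)$ depends only on $\pi(\u)$, the function $P_t(g\circ\pi)(\u)=\E[g(\bar L(t)\pi(\u))]$ is constant on $\H^2$-cosets, hence descends to a bounded $\A$-measurable $h$ with $h\circ\pi=P_t(g\circ\pi)$; and by construction $h=\bar P_t g$. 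This simultaneously shows that $\bar P_t$ is a well-defined Markov semigroup on the quotient and yields the intertwining identity.

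Next I would use that $\mu$ is in fact \emph{ergodic} for $L(t)$: by Lemma \ref{muergodic} it is the unique invariant probability measure of the $L(t)$-process on the Polish space $X^\alpha$ (Polishness of $X^\alpha$ is already used in the proof of Lemma \ref{irreducibility}), and a Markov process on a Polish space with a unique invariant probability measure is ergodic with respect to it — via the ergodic decomposition, or because otherwise conditioning $\mu$ on an invariant set of intermediate measure and on its complement would produce two distinct invariant measures. With this in hand, take any $\bar A\in\A$ which is invariant for the $\bar L(t)$-process, i.e.\ $\bar P_t\1_{\bar A}=\1_{\bar A}$ $\pi_\sharp\mu$-a.e.\ for every $t>0$, and set $A:=\pi^{-1}(\bar A)$, a Borel subset of $X^\alpha$. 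The intertwining gives $P_t\1_A=(\bar P_t\1_{\bar A})\circ\pi$, and since the $\pi_\sharp\mu$-null set on which $\bar P_t\1_{\bar A}\neq\1_{\bar A}$ pulls back under $\pi$ to a $\mu$-null set, one gets $P_t\1_A=\1_A$ $\mu$-a.e.\ for every $t>0$. Thus $A$ is invariant for the $L(t)$-process, so $\mu(A)\in\{0,1\}$ by ergodicity of $\mu$, and therefore $\pi_\sharp\mu(\bar A)=\mu(A)\in\{0,1\}$, which is exactly ergodicity of $\pi_\sharp\mu$.

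I expect the main obstacle to be the measure-theoretic bookkeeping around the quotient $\sigma$-algebra $\A$: verifying that $\bar L(t)$ induces a genuine Markov semigroup on $(X^\alpha/\H^2,\A)$ and that the intertwining $(\bar P_t g)\circ\pi=P_t(g\circ\pi)$ is legitimate, given that $X^\alpha/\H^2$ carries no usable topology (so that, for instance, one cannot simply push the weak convergence from Lemma \ref{muergodic} through $\pi$). By contrast the dynamical content — uniqueness, hence ergodicity, of $\mu$ for the linear flow — is essentially free, being already packaged in Lemma \ref{muergodic}.
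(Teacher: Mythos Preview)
Your proposal is correct and follows essentially the same route as the paper: show invariance of $\pi_\sharp\mu$ via the intertwining $\pi\circ L(t)=\bar L(t)\circ\pi$, then pull back an invariant observable on the quotient to an invariant observable on $X^\alpha$ and invoke ergodicity of $\mu$ (which, as you note, follows from the uniqueness in Lemma~\ref{muergodic}). The only cosmetic difference is that the paper works with a general invariant function $G$ rather than the indicator of an invariant set, and compresses your intertwining discussion into a single line; the measure-theoretic bookkeeping you flag as the main obstacle is simply taken for granted there.
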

\begin{proof}
If $G:X^\alpha/\H^2 \to \R$ is a bounded measurable function, then by invariance of $\mu$,
\begin{equation} \label{piinvariance}
\begin{aligned}
\int \E[G(\bar L(t)x)] \d\pi_\sharp\mu(x) &= \int \E[G(\bar L(t)\pi(\u))] \d \mu(\u) \\
& = \int \E[G(\pi(L(t)\u))] \d\mu(\u) \\
& = \int G(\pi(\u)) \d\mu(\u) \\
& = \int G(x) \d\pi_\sharp\mu(x),
\end{aligned}
\end{equation}
so $\pi_\sharp\mu$ is invariant.

Let now $G$ be a function such that $\E[G(\bar L(t) x)] = G(x)$ for $\pi_\sharp\mu$-a.e.\ $x~{\!\in\!}~X^\alpha/\H^2$\!.
Then 
$$\E[G\circ\pi(L(t)\u)]\E[G(\pi(L(t) \u))] = E[G(\bar L(t) \pi(\u))] = G(\pi(\u)),$$
so $G\circ \pi$ is $\mu$-a.s.\ constant by ergodicity of $\mu$. Therefore, $G$ is $\pi_\sharp\mu$-a.s.\ constant,
so $\pi_\sharp\mu$ is ergodic.
\end{proof}
\begin{Remark} \label{0-1}
We can see $\mu$ as the law of the random variable $\u$ defined in \eqref{u(omega)}. 
In this way, for every $E\subset X^\alpha$, by definition $\mu(E) = \P(\{\u \in E\})$. If $E = E + \H^2$, then 
the event $\{\u \in E\}$ is independent from $\{g_n,h_n| |n|<N\}$ for every $N$, since $\H^2$ contains every 
function with finite Fourier support. Therefore, $E \in \bigcap_N \sigma(g_n,h_n| |n|\ge N)$.
By Kolmogorov's 0-1 theorem, this implies that $\mu(E) = 0$ or $\mu(E) = 1$.

Since by definition 
$\pi_{\sharp}\mu(F) = \mu(\pi^{-1} (F))$ and $\pi^{-1}(F) =  \pi^{-1}(F) + \H^2$, then
for \emph{every} set $F\in \A$ we have $\pi_{\sharp}\mu(F) \in \{0,1\}$, therefore trivially any invariant set has
measure $0$ or $1$, hence the measure $\pi_\sharp\mu$ is ergodic.
\end{Remark}
\begin{Proposition}\label{ergodicity}
Let $\nu$ be an invariant measure for the flow of \eqref{bvec} such that $\pi_\sharp \nu \ll \pi_\sharp \mu$. Then $\nu = \rho$.\footnote{Notice that since $\rho \ll \mu$, then $\pi_\sharp \rho \ll \pi_\sharp \mu$.}
\end{Proposition}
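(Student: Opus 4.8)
The plan is to reduce to the case $\nu \ll \rho$ and then force $\nu = \rho$, using at the decisive moments the separation of Lemma~\ref{supports} against the $0$--$1$ behaviour of $\pi_\sharp\mu$ recorded in Remark~\ref{0-1}. First I would collect three elementary facts. (i) By \eqref{piPhi=piL}, $\pi_\sharp$ of any $\Phi_t$-invariant measure is $\bar L(t)$-invariant; in particular $\pi_\sharp\nu$ and $\pi_\sharp\rho$ are $\bar L$-invariant. (ii) By Remark~\ref{0-1}, $\pi_\sharp\mu(F) \in \{0,1\}$ for every $F \in \A$, so any probability measure on $\A$ absolutely continuous with respect to $\pi_\sharp\mu$ must coincide with $\pi_\sharp\mu$; consequently $\pi_\sharp\nu = \pi_\sharp\mu$ (hypothesis) and $\pi_\sharp\rho = \pi_\sharp\mu$ (since $\rho \ll \mu$ gives $\pi_\sharp\rho \ll \pi_\sharp\mu$), and more generally $\pi_\sharp\sigma = \pi_\sharp\mu$ for every invariant probability measure $\sigma$ with $\pi_\sharp\sigma \ll \pi_\sharp\mu$. (iii) \emph{Incompatibility:} there cannot exist two invariant probability measures $\sigma_1 \perp \sigma_2$ with both $\pi_\sharp\sigma_i \ll \pi_\sharp\mu$. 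Indeed, by (ii) we would have $\pi_\sharp\sigma_1 = \pi_\sharp\sigma_2 = \pi_\sharp\mu$, while Lemma~\ref{supports} produces $V$ with $\sigma_1(V) = 1$ and $\sigma_2(V + \H^2) = 0$; the set $W := V + \H^2$ is Borel and $\H^2$-saturated (this is how it is obtained in the proof of Lemma~\ref{supports}), so $\bar V := \pi(W) \in \A$ and $\pi^{-1}(\bar V) = W$, whence $\pi_\sharp\sigma_1(\bar V) = \sigma_1(W) \ge \sigma_1(V) = 1$ but $\pi_\sharp\sigma_2(\bar V) = \sigma_2(W) = 0$, contradicting $\pi_\sharp\sigma_1 = \pi_\sharp\sigma_2$.

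Next I would reduce to $\nu \ll \rho$. Let $\nu = \nu_a + \nu_s$ be the Lebesgue decomposition relative to $\rho$, with $\nu_a \ll \rho$ and $\nu_s \perp \rho$. Both summands are again invariant: since $\rho P_t = \rho$, for every Borel $A$ with $\rho(A) = 0$ one has $P_t(x,A) = 0$ for $\rho$-a.e.\ $x$, hence $\nu_a P_t \ll \rho$; comparing with the (unique) Lebesgue decomposition of $\nu = \nu_a P_t + \nu_s P_t$ yields $\nu_a P_t \le \nu_a$, and because $P_t$ preserves total mass (the flow is globally defined, Corollary~\ref{gwp}) this forces $\nu_a P_t = \nu_a$ and then $\nu_s P_t = \nu_s$. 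If $\nu_s \ne 0$, its normalisation is an invariant probability measure with $\nu_s \perp \rho$ and $\pi_\sharp\nu_s \le \pi_\sharp\nu = \pi_\sharp\mu$, so the pair $(\nu_s, \rho)$ violates (iii); therefore $\nu_s = 0$ and $\nu \ll \rho$.

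It remains to treat $\nu \ll \rho$ with $\nu \ne \rho$. Put $g := \d\nu/\d\rho \in L^1(\rho)$; then $g \not\equiv 1$, and since $\int g \, \d\rho = 1$ both $\{g > 1\}$ and $\{g < 1\}$ have positive $\rho$-measure. For any two invariant finite measures $m_1, m_2$, both $m_1 \vee m_2$ and $m_1 \wedge m_2$ are invariant: indeed $(m_1 \vee m_2)P_t \ge m_1 P_t = m_1$ and $\ge m_2 P_t = m_2$, hence $(m_1 \vee m_2)P_t \ge m_1 \vee m_2$, while the two sides have equal total mass, so equality holds (and dually for $\wedge$). Applying this to $\nu$ and $\rho$, the nonzero positive measures $\rho_1 := \nu \vee \rho - \rho = (g-1)_+\,\rho$ and $\rho_2 := \rho - \nu \wedge \rho = (1-g)_+\,\rho$ are invariant, and they are mutually singular, being concentrated on the disjoint sets $\{g > 1\}$ and $\{g < 1\}$; since $\rho_1, \rho_2 \ll \rho \ll \mu$, their normalisations are invariant probability measures with projections $\ll \pi_\sharp\mu$. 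This again contradicts (iii), so $\nu = \rho$.

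I expect the real content to be step (iii): it is precisely here that the ``restricted'' strong Feller property --- which only separates supports after adding $\H^2$ --- is played off against the algebraic $0$--$1$ law for $\pi_\sharp\mu$, and the remaining work is the bookkeeping needed to certify that the auxiliary measures to which (iii) is applied (the singular Lebesgue part $\nu_s$, and the lattice differences $(g-1)_+\rho$ and $(1-g)_+\rho$) are genuinely $\Phi_t$-invariant probability measures.
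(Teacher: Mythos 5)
Your proposal is correct and follows essentially the paper's own argument: produce mutually singular invariant measures from the assumption $\nu \neq \rho$, show their projections all coincide with $\pi_\sharp\mu$, and contradict Lemma \ref{supports} through the $\H^2$-saturated Borel set $V + \H^2$. The only deviations are minor: you invoke the $0$--$1$ law of Remark \ref{0-1} instead of the ergodicity of $\pi_\sharp\mu$ (an alternative the paper itself records in the remark following the proposition), and you insert an extra Lebesgue-decomposition reduction to $\nu \ll \rho$ before the lattice argument, whereas the paper applies the Jordan decomposition of $\rho - \nu$ directly, which makes that reduction unnecessary.
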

\begin{proof}
Suppose by contradiction that $\nu \neq \rho$. Let 
\begin{align*}
\rho_1 = \frac{1}{(\rho - \nu)_{+}(X^\alpha)} (\rho - \nu)_{+}\\
\rho_2 = \frac{1}{(\nu-\rho)_{+}(X^\alpha)}(\nu - \rho)_{+}.
\end{align*}
Since $\rho,\nu$ are invariant, it is easy to see that $\rho_1, \rho_2$ are both invariant probabilities.
Moreover, $\rho_1 \perp \rho_2$, and $\rho_j \ll \rho + \nu$, so $\pi_\sharp \rho_j \ll \pi_\sharp \rho + \pi_\sharp \nu \ll \pi_\sharp \mu$.

Proceeding as for \eqref{piinvariance}, and using \eqref{piPhi=piL}, we have 
\begin{align*}
\int \E[G(\bar L(t)x)] \d\pi_\sharp\rho_j(x) &= \int \E[G(\bar L(t)\pi(\u))] \d \rho_j(\u) \\
& = \int \E[G(\pi(\Phi_t(\u;\xi)))] \d\rho_j(\u) \\
& = \int G(\pi(\u)) \d\rho_j(\u) \\
& = \int G(x) \d\pi_\sharp\rho_j(x),
\end{align*}
therefore $\pi_\sharp\rho_j$ is invariant. Moreover, since $\pi_\sharp\rho_j\ll \pi_\sharp\mu$, by invariance of $\pi_\sharp \rho_j$ and ergodicity of $\pi_\sharp \mu$,
we must have $\pi_\sharp\rho_j = \pi_\sharp\mu$. 
Let now $V$ be the set given by Lemma \ref{supports}, i.e.\ $\rho_1(V) = 1$, $\rho_2(V + \H^2) = 0$.
We have 
\begin{align*}
 0 &= \rho_2(V+\H^2) = \pi_\sharp\rho_2(\pi(V + \H^2))=\pi_\sharp\mu(\pi(V + \H^2))\\
&=\pi_\sharp\rho_1(\pi(V + \H^2))=\rho_1(V+\H^2) \ge \rho_1(V) = 1,
\end{align*}
contradiction.
\end{proof}

\begin{Remark}
Using Remark \ref{0-1}, it is possible to show $\pi_\sharp\rho_j=\pi_\sharp\mu$ without using the ergodicity of 
$\pi_\sharp\mu$. We have indeed that $\rho_j\ll\mu$ implies $\pi_\sharp\rho_j\ll\pi_\sharp\mu$. Let 
$E$ be any set with $\pi_\sharp\rho_j(E)>0$. Then by absolute continuity, 
$\pi_\sharp\mu(E) > 0$ as well, and by Remark \ref{0-1}, $\pi_\sharp\mu(E) = 1 \ge \pi_\sharp\rho_j(E)$.  
Therefore $\pi_\sharp\rho_j\le\pi_\sharp\mu$, and since they are both probabilities, we must have 
$\pi_\sharp\rho_j=\pi_\sharp\mu$.
\end{Remark}
\begin{Corollary} \label{ergodicity2}
The measure $\rho$ is ergodic for the Markov process associated to the flow $\Phi_t(\cdot,\xi):X^\alpha\to X^\alpha$ of \eqref{bvec}. 
\end{Corollary}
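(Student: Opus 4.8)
The plan is to obtain ergodicity of $\rho$ as a formal consequence of Proposition \ref{ergodicity}, using the standard dichotomy that an invariant probability measure is ergodic if and only if it cannot be split along a nontrivial invariant set into two mutually singular invariant measures. Recall that $\rho$ has already been shown to be invariant for the flow of \eqref{bvec}, so only the ergodicity statement remains, i.e.\ that every Borel set $A\subseteq X^\alpha$ with $P_t\1_A=\1_A$ $\rho$-a.s.\ for all $t>0$ has $\rho(A)\in\{0,1\}$, where $P_tF:=\E[F(\Phi_t(\cdot,\xi))]$.

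Arguing by contradiction, I would assume there is such an invariant set $A$ with $0<\rho(A)<1$ and form the conditioned measures $\nu_1:=\rho(\,\cdot\cap A)/\rho(A)$ and $\nu_2:=\rho(\,\cdot\cap A^c)/\rho(A^c)$. The routine verifications are: (i) $\nu_1,\nu_2$ are invariant probability measures, which follows from the identity $P_t(\1_AF)=\1_AP_tF$ (valid $\rho$-a.s.\ because $A$ is invariant) combined with the invariance of $\rho$; (ii) $\nu_1\perp\nu_2$, and in particular $\nu_1\neq\nu_2$; and (iii) $\nu_j\ll\rho\ll\mu$, which forces $\pi_\sharp\nu_j\ll\pi_\sharp\mu$ — indeed, if $F\in\A$ satisfies $\pi_\sharp\mu(F)=0$, then $\mu(\pi^{-1}(F))=0$, hence $\nu_j(\pi^{-1}(F))=0$. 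With (i) and (iii) in hand, Proposition \ref{ergodicity} applies to both $\nu_1$ and $\nu_2$ and yields $\nu_1=\nu_2=\rho$, contradicting (ii). Therefore no such $A$ exists and $\rho$ is ergodic.

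I do not anticipate a genuine obstacle here: all of the substantive content — the restricted strong Feller property, irreducibility of the flow, Lemma \ref{supports}, and the transfer of ergodicity from $\pi_\sharp\mu$ established in Proposition \ref{pisharpergodicity} — has already been carried out and is compressed into Proposition \ref{ergodicity}. The only mildly delicate point is checking that the two conditioned measures $\nu_j$ are again invariant for the Markov semigroup, but this is the standard ``restriction to an invariant set'' argument and presents no difficulty.
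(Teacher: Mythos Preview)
Your proposal is correct and follows essentially the same route as the paper. The paper's proof is simply the terse version: it invokes the standard characterization that $\rho$ is ergodic iff every invariant $\nu\ll\rho$ equals $\rho$, and then applies Proposition~\ref{ergodicity} directly to any such $\nu$ (since $\nu\ll\rho\ll\mu$ gives $\pi_\sharp\nu\ll\pi_\sharp\mu$); your argument just unpacks this characterization by explicitly building $\nu_1,\nu_2$ from a hypothetical invariant set.
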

\begin{proof}
Let $\nu \ll \rho$, $\nu$ invariant. We have that $\pi_\sharp \nu \ll \pi_\sharp \rho \ll \pi_\sharp \mu$. Hence, by Proposition \ref{ergodicity}, $\nu = \rho$. Therefore, $\rho$ is ergodic.
\end{proof}

We conclude this section by proving unique ergodicity for the measure $\rho$. This will be the only part of this
paper for which we require the good long-time estimates for the flow given by \eqref{vgrowth} (up to this
point, whenever we used Corollary \ref{gwp}, we needed just the qualitative result of global existence and 
\emph{time-dependent} bounds on the growth of the solution).

In particular, we will prove the following version of Birkhoff's theorem for this process, which in particular implies
Theorem \ref{mainthm}.
\begin{Proposition}
Let $\Phi_t(\u;\xi)$ be the flow of \eqref{bvec}. For every $u_0 \in X^\alpha$, we have that $\rho_t \rightharpoonup \rho$ as $t \to \infty$, where $\rho_t$ is defined by
$$ \int F(\u)\d \rho_t(\u) := \frac1t \int_0^t\E[F(\Phi_{t'}(\u_0,\xi))] \d t'.$$
\end{Proposition}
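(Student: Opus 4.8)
The plan is to run a Krylov--Bogolyubov argument in $X^\alpha$ and then pin down the weak limit points of $\{\rho_t\}$ using the projection $\pi$ and the results of Section 5. First I would establish tightness of the family $\{\rho_t\}_{t>0}$ in $X^\alpha$. Writing $\Phi_{t'}(\u_0,\xi)=S(t')\u_0+\stick_{t'}(\xi)+\v(t')$, each summand is controlled uniformly in $t'$: the map $t'\mapsto S(t')\u_0$ is a continuous $X^\alpha$-valued path with $\norm{S(t')\u_0}_{X^\alpha}\le e^{-t'/8}\norm{\u_0}_{X^\alpha}\to0$, hence takes values in a fixed compact subset of $X^\alpha$; the stochastic convolution obeys $\E\norm{\stick_{t'}(\xi)}_{X^{\alpha'}}^p\lesssim_p1$ uniformly in $t'$ for any $\alpha<\alpha'<\frac12$ (the estimates in the proof of Proposition~\ref{stickX} are uniform in the base time), and $X^{\alpha'}\into X^\alpha$ is compact; and $\E\norm{\v(t')}_{\H^2}^2\lesssim1$ uniformly in $t'$ by the energy bound \eqref{vgrowth} of Corollary~\ref{gwp} together with $\sup_{t'}\E\norm{\stick_{t'}(\xi)}_{\C^\alpha}^{8/\alpha}\lesssim1$, while $\H^2\into X^\alpha$ is compact by Lemma~\ref{H2intoX}. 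By Markov's inequality this produces, for every $\epsilon>0$, a compact $K_\epsilon\subset X^\alpha$ with $\rho_t(K_\epsilon)\ge1-\epsilon$ for all $t$; since $X^\alpha$ is Polish, Prokhorov's theorem gives that $\{\rho_t\}$ is weakly precompact.

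Next I would run the Krylov--Bogolyubov argument. The semigroup $P_tF(\u)=\E[F(\Phi_t(\u,\xi))]$ maps $C_b(X^\alpha)$ into itself by continuity of the flow in the initial datum (Proposition~\ref{continuity}) and dominated convergence, so for any subsequential weak limit $\nu=\lim_k\rho_{t_k}$ and any $F\in C_b(X^\alpha)$, using the semigroup property $P_sP_{t'}=P_{s+t'}$,
\[
\int P_sF\,\d\nu=\lim_k\int P_sF\,\d\rho_{t_k}=\lim_k\Big(\int F\,\d\rho_{t_k}+O\big(s\norm{F}_{L^\infty}/t_k\big)\Big)=\int F\,\d\nu,
\]
since $\int P_sF\,\d\rho_{t_k}=\frac1{t_k}\int_s^{t_k+s}(P_{t'}F)(\u_0)\,\d t'$ differs from $\int F\,\d\rho_{t_k}=\frac1{t_k}\int_0^{t_k}(P_{t'}F)(\u_0)\,\d t'$ by at most $2s\norm{F}_{L^\infty}/t_k$. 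Hence $\nu$ is invariant for the flow of \eqref{bvec} in the sense of \eqref{defInvariance}.

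It then remains to show that every such $\nu$ equals $\rho$. By \eqref{piPhi=piL} we have $\pi\circ\Phi_t(\cdot;\xi)=\bar L(t)\circ\pi$, so $\pi_\sharp\nu$ is invariant for the projected linear flow $\bar L$. The key step is to upgrade this to $\pi_\sharp\nu=\pi_\sharp\mu$: this cannot be obtained by pushing the weak convergence $\rho_{t_k}\rightharpoonup\nu$ forward through $\pi$, because $\H^2$ is dense in $X^\alpha$ and hence $\pi$ is nowhere continuous, so the identification must proceed through the measure-theoretic structure of $\A$ rather than through any topology. Combining the unique ergodicity of $\mu$ for the linear flow $L$ and the weak convergence of the law of $L(t)\u$ to $\mu$ for every $\u\in X^\alpha$ (Lemma~\ref{muergodic}) with the $0$-$1$ property of $\pi_\sharp\mu$ on $\A$ (Proposition~\ref{pisharpergodicity} and Remark~\ref{0-1}), and the uniform tightness from the first step applied to the linear flow, one identifies $\pi_\sharp\nu$ with $\pi_\sharp\mu$. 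Since $\rho\ll\mu$ and $\pi_\sharp\mu$ is $0$-$1$ valued on $\A$, one also has $\pi_\sharp\rho=\pi_\sharp\mu$, so $\pi_\sharp\nu=\pi_\sharp\rho\ll\pi_\sharp\mu$, and Proposition~\ref{ergodicity} forces $\nu=\rho$. As every weak limit point of the precompact family $\{\rho_t\}$ equals $\rho$, we conclude $\rho_t\rightharpoonup\rho$ as $t\to\infty$; applied to $F\in C_b(X^\alpha)$ this yields \eqref{meanconvergence} and, together with Sections 4 and 5, completes the proof of Theorem~\ref{mainthm}.

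I expect the main obstacle to be precisely the identification of $\pi_\sharp\nu$: the quotient $X^\alpha/\H^2$ has no workable topology and $\pi$ is discontinuous, so the argument can use only the Kolmogorov $0$-$1$ (tail) structure of $\pi_\sharp\mu$ rather than weak convergence, which requires some care. By contrast, the tightness (in particular the uniform-in-time control of the remainder $\v$) and the Feller property should be routine given Corollary~\ref{gwp} and Proposition~\ref{continuity}.
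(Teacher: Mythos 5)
Your outline reproduces the paper's strategy up to the decisive step (tightness of $\rho_t$ via the decomposition $\Phi_{t'}=S(t')\u_0+\stick_{t'}(\xi)+\v$, the uniform bound \eqref{vgrowth}, and Krylov--Bogolyubov to make every weak limit point $\nu$ invariant), but at the one point you yourself flag as the main obstacle --- showing $\pi_\sharp\nu=\pi_\sharp\mu$ --- you give no argument, only a list of ingredients, and those ingredients do not suffice. Invariance of $\pi_\sharp\nu$ under $\bar L$ together with ergodicity (or the $0$--$1$ property) of $\pi_\sharp\mu$ does not identify the two measures: ergodicity only rules out invariant measures that are absolutely continuous with respect to $\pi_\sharp\mu$ and distinct from it, while an a priori singular invariant measure for $\bar L$ on the quotient is not excluded. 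Nor does unique ergodicity of $\mu$ for $L$ on $X^\alpha$ (Lemma \ref{muergodic}) transfer to the quotient, since an invariant measure for $\bar L$ on $X^\alpha/\H^2$ need not lift to an invariant measure for $L$. The paper's closing Remark makes exactly this point: unique ergodicity of $\pi_\sharp\mu$ for $\bar L$ is \emph{not} known, and if it were, the long-time estimate on $\v$ would be unnecessary altogether. Finally, as you correctly observe, $\pi$ is nowhere continuous, so the finite-time identity $\pi_\sharp\rho_t=\pi_\sharp\big(\tfrac1t\int_0^t\mu_{t'}\,\d t'\big)$ cannot be passed through the weak limit; so the identification is genuinely missing from your proposal.

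The idea that closes this gap in the paper is a doubling of variables. One considers the joint law of $Y_t:=(S(t)\u_0+\stick_t(\xi),\,\v(\u_0,\xi;t))$ in $X^\alpha\times X^\alpha$ and its time averages $\nu_t$; by Lemma \ref{muergodic} and the uniform-in-time bound \eqref{vgrowth} these are tight with compacts of the form $K_\epsilon\times\{\norm{\cdot}_{\H^2}\le c_\epsilon\}$, so along the subsequence realizing $\bar\rho$ one extracts $\nu_{t_n}\rightharpoonup\nu$ with $\nu$ concentrated on $X^\alpha\times\H^2$. Writing $\mathfrak S(x,y)=x+y$ and $\pi_1(x,y)=x$, one has $\mathfrak S_\sharp\nu=\bar\rho$ and $(\pi_1)_\sharp\nu=\mu$ (again by Lemma \ref{muergodic}), and on $X^\alpha\times\H^2$ the purely algebraic identity $\pi\circ\mathfrak S=\pi\circ\pi_1$ holds; since $\nu$ charges only that set, $\pi_\sharp\bar\rho=\pi_\sharp\mathfrak S_\sharp\nu=\pi_\sharp(\pi_1)_\sharp\nu=\pi_\sharp\mu$, and Proposition \ref{ergodicity} then gives $\bar\rho=\rho$. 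This is where the uniform $\H^2$-control of $\v$ is used in an essential, not merely technical, way: it is what forces the limiting joint law onto $X^\alpha\times\H^2$ so that the discontinuous map $\pi$ can be applied measurably \emph{after} the limit. Without this (or an equivalent device), the step from ``$\nu$ invariant'' to ``$\pi_\sharp\nu=\pi_\sharp\mu$'' does not go through.
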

\begin{proof}
Consider the usual decomposition 
$$\Phi_t (\u_0;\xi) = S(t)\u_0 + \stick_t(\xi) + \v(\u_0,\xi;t).$$
We have that the law $\mu_t$ of $S(t)\u_0+ \stick_t(\xi) = L(\u_0)$ is tight in $X^\alpha$, because by Lemma \ref{muergodic}, $\mu_t \rightharpoonup \mu$ as $t \to \infty$. 
Therefore, there exists compact sets $K_\epsilon \subseteq X^\alpha$ such that $\P(\{S(t)\u_0+ \stick_t(\xi) \in K_\epsilon\}) \ge 1 - \epsilon$. 
Moreover, by the estimate \eqref{vgrowth} and the compactness of the embedding 
$\H^2 \hookrightarrow X^\alpha$, we have that also the law of $\v$ is tight; more precisely, there exists constants $c_\epsilon$ such that 
$\P(\{\norm{\v}_{\H^2} \le c_\epsilon\})\ge 1- \epsilon$, uniformly in $t$.
Therefore,
$$\P(\big\{\Phi_t(\u_0,\xi) \in K_\epsilon + \{\norm{\cdot}_{\H^2} \le c_\epsilon\}\big\}) \ge 1-2\epsilon, $$
so also the law of $\Phi_t(\u_0,\xi)$ is tight. By averaging in time, we obtain that also the sequence $\rho_t$ is. Hence it is enough to prove that 
every weak limit point $\bar \rho$ of $\rho_t$ satisfies $\bar \rho = \rho$. Notice that, by definition, $\bar\rho$ is invariant.
Let $t_n \to \infty$ be 
a sequence such that $\rho_{t_n} \rightharpoonup \bar\rho$. Consider the random variable 
\begin{equation*}
Y_t := (S(t)\u_0 + \stick_t(\xi), \v(\u_0,\xi;t)) \in X^\alpha \times X^\alpha.
\end{equation*}
By the same argument, the law $Y_t$ is tight in $X^\alpha \times X^\alpha$, with 
compact sets $C_\epsilon$ of the form $C_\epsilon = K_\epsilon\times \{\norm{y}_{\H^2} \le c_\epsilon\}$ such that $\P(Y_t \in C_\epsilon) \ge 1 - \epsilon$ uniformly in $t$. Therefore, tightness with the same associated compact sets will hold for the measure $\nu_t$ given by 
\begin{equation*}
\int F(\u_1,\u_2) \d \nu_t (\u_1,\u_2) = \frac1t\int_0^t \E[F(Y_t)] \d t.
\end{equation*}
Hence, up to subsequences, $\nu_{t_n} \rightharpoonup \nu$, with $\nu$ concentrated on $X^\alpha \times \H^2$. 
Define the maps 
$\mathfrak S,\pi_1: X^\alpha \times X^\alpha \to X^\alpha$ by
\begin{align*}
\mathfrak S(x,y)&:= x+y,\\
\pi_1(x,y) &:= x.
\end{align*}
Since $\mathfrak S(Y_t) =  \Phi_t(\u_0,\xi)$, then $\mathfrak S_\sharp\nu=\bar\rho$. Moreover, since $\pi_1(Y_t) = S(t)\u_0 + \stick_t(\xi)$, we have that $(\pi_1)_\sharp\nu_t = \mu_t$, so $(\pi_1)_\sharp \nu = \mu$. Recall 
the projection $\pi:X^\alpha \to X^\alpha/\H^2$. On $X^\alpha \times \H^2$, we have that
$\pi \circ \mathfrak S = \pi \circ \pi_1$. Therefore, since $\nu$ is concentrated on $X^\alpha \times \H^2$, 
$$\pi_\sharp\bar\rho = \pi_\sharp\mathfrak S_\sharp \nu = \pi_\sharp(\pi_1)_\sharp\nu = \pi_\sharp \mu.$$
Hence, by Proposition \eqref{ergodicity}, we get $\bar \rho = \rho$.
%
\end{proof}
\begin{Remark}
If we could improve Proposition \ref{pisharpergodicity}
to \emph{unique} ergodicity for the measure 
$\pi_\sharp\mu$, we would automatically improve the result of 
Corollary \ref{ergodicity2} to \emph{unique}
ergodicity, without using at all the long time estimates for the growth of $\v$. Indeed, in the proof of Proposition
\ref{ergodicity}, the only point in which we used the condition $\rho_j \ll \rho$ was for showing that 
$\pi_\sharp \rho_j = \pi_\sharp\mu$. If we knew that the measure $\pi_\sharp\mu$ was uniquely ergodic, then 
$\pi_\sharp \rho_j = \pi_\sharp\mu$ will follow automatically from invariance, without the need for the extra condition $\rho_j \ll \rho$.
\end{Remark}
\section*{Acknowledgements}
The author would like to thank his PhD supervisor Tadahiro Oh for suggesting this problem and his continuous  help and support in the preparation of this work. He would also like to thank J. Forlano and P. Sosoe for reading the draft of this paper, and their many useful suggestions and corrections, and the unnamed referees, for their many suggestions on how to improve the paper and make it more readable.

The author was supported by the European Research Council (grant no. 637995 ``ProbDynDispEq") and by The Maxwell Institute Graduate School in Analysis and its Applications, a Centre for Doctoral Training funded by the UK Engineering and Physical Sciences Research Council (grant EP/L016508/01), the Scottish Funding Council, Heriot-Watt University and the University of Edinburgh.

\begin{minipage}{\linewidth}
\textsc{\hfill\\
Leonardo Tolomeo\\
Mathematical Institute\\ Hausdorff Center for Mathematics \\
Universit\"at Bonn\\
}
\texttt{tolomeo@math.uni-bonn.de}
\end{minipage}

\end{document}